\documentclass{article}

\newcommand{\Debug}{1}


\usepackage{amsmath, amsthm, amssymb, amstext, epsf,amsfonts}
\usepackage{enumerate}
\usepackage{graphicx}
\usepackage[applemac]{inputenc}

\usepackage[percent]{overpic}

\theoremstyle{plain}

\newtheorem{thm}{Theorem}[section]
\newtheorem{lem}[thm]{Lemma}
\newtheorem{cor}[thm]{Corollary}

\newtheorem{conj}[thm]{{Conjecture}}

\newtheorem{problem}[thm]{{Problem}}

\theoremstyle{definition}
\newtheorem{defi}[thm]{Definition}

%

\newcommand{\Cg}{Cayley graph}

\newcommand{\R}{\ensuremath{\mathbb{R}}}
\newcommand{\N}{\ensuremath{\mathbb{N}}}
\newcommand{\Z}{\ensuremath{\mathbb{Z}}}

\newcommand{\sm}{\ensuremath{\setminus}}

\newcommand{\inv}{\ensuremath{^{-1}}}

\newcommand{\rand}{\partial}
\newcommand{\Aut}{\textnormal{Aut}}

\newcommand{\sub}{\subseteq}

\newcommand{\comment}[1]{}

\newcommand{\nat}{{\mathbb N}}

\newcommand{\BF}{\ensuremath{\mathcal B}}
\newcommand{\CF}{\ensuremath{\mathcal C}}
\newcommand{\DF}{\ensuremath{\mathcal D}}
\newcommand{\EF}{\ensuremath{\mathcal E}}
\newcommand{\FF}{\ensuremath{\mathcal F}}

\newcommand{\HF}{\ensuremath{\mathcal H}}
\newcommand{\IF}{\ensuremath{\mathcal I}}

\newcommand{\PF}{\ensuremath{\mathcal P}}

\newcommand{\RF}{\ensuremath{\mathcal R}}
\newcommand{\SF}{\ensuremath{\mathcal S}}
\newcommand{\TF}{\ensuremath{\mathcal T}}

\newcommand{\VF}{\ensuremath{\mathcal V}}
\newcommand{\WF}{\ensuremath{\mathcal W}}

\newcommand{\prbl}{pre-block}
\newcommand{\prcl}{pre-cluster}

\newcommand{\blk}{block}
\newcommand{\nss}{non-self-separating}
\newcommand{\sese}{self-separating}
\newcommand{\splpr}{special \plpr}
\newcommand{\glplpr}{general \plpr}
\newcommand{\gplpr}{general \plpr}
\newcommand{\gcplpr}{generic \plpr}
\newcommand{\gempr}{generic \empr}

\newcommand{\Kc}{\ensuremath{\overline{K}}}
\newcommand{\cellsk}{\ensuremath{K^2}}
\newcommand{\pii}{\ensuremath{\pi^{-1}}}
\newcommand{\suf}{super-face}
\newcommand{\wrt}{with respect to}
\newcommand{\utr}{up to reflection}
\newcommand{\xl}{\ensuremath{X^\ell}}

\ifnum \Debug = 1 
	\def\?#1{\vadjust{\vbox to 0pt{\vss\vskip-8pt\leftline{%
     \llap{\hbox{\vbox{\pretolerance=-1
     \doublehyphendemerits=0\finalhyphendemerits=0
     \hsize16truemm\tolerance=10000\small
     \lineskip=0pt\lineskiplimit=0pt
     \rightskip=0pt plus16truemm\baselineskip8pt\noindent
     \hskip0pt        
     #1\endgraf}\hskip7truemm}}}\vss}}}
\else \newcommand{\?}[1]{} \fi

\begin{document}

\title{The planar Cayley graphs are effectively enumerable II}
\author{Agelos Georgakopoulos\thanks{Supported by EPSRC grant EP/L002787/1, and by the European Research Council (ERC) under the European Union's Horizon 2020 research and innovation programme (grant agreement No 639046). The first author would like to thank the Isaac Newton Institute for Mathematical Sciences, Cambridge, for support and hospitality during the programme `Random Geometry' where work on this paper was undertaken.}
\medskip 
\\
  {Mathematics Institute}\\
 {University of Warwick}\\
  {CV4 7AL, UK}\\
\and
 Matthias Hamann\thanks{Supported by the Heisenberg-Programme of the Deutsche Forschungsgemeinschaft (DFG Grant HA 8257/1-1).\newline Both authors have been supported by FWF grant P-19115-N18.} 
  \medskip \\
{Alfr\'ed R\'enyi Institute of Mathematics}\\
{Hungarian Academy of Sciences}\\
{Budapest, Hungary}\\
\\
}
\date{\today}
\maketitle

\newcommand{\labtequ}[2]{ \begin{equation} \label{#1} 	\begin{minipage}[c]{0.9\textwidth} \centering #2 \end{minipage} \ignorespacesafterend \end{equation} } 
\newcommand{\showFig}[2]{
   \begin{figure}[htbp]
   \centering
   \noindent
\includegraphics{#1.eps}
   \caption{\small #2}
   \label{#1}
   \end{figure}
}
\newcommand{\fig}[1]{Figure~{\ref{#1}}}

\newcommand{\cR}{\ensuremath{\mathcal{R}}}
\newcommand{\srev}{spin-reversing}
\newcommand{\plpr}{planar presentation}
\newcommand{\empr}{embedded presentation}

\newcommand{\Plpr}{Planar presentation}
\newcommand{\spre}{spin-preserving}
\newcommand{\Gam}{\ensuremath{\Gamma}}
\newcommand{\sig}{\ensuremath{\sigma}}
\newcommand{\g}{\ensuremath{G\ }}
\newcommand{\G}{\ensuremath{G}}
\renewcommand{\iff}{if and only if}
\newcommand{\fe}{for every}
\newcommand{\Fe}{For every}
\newcommand{\fea}{for each}
\newcommand{\Fea}{For each}
\newcommand{\st}{such that}
\newcommand{\sot}{so that}
\newcommand{\ti}{there is}
\newcommand{\ta}{there are}

\newcommand{\cns}{consistent}
\newcommand{\cnsem}{consistent embedding}
\newcommand{\vapf}{accumulation-free}
\newcommand{\dray}{double ray}
\newcommand{\cls}[1]{\overline{#1}}


\newcommand{\Lr}[1]{Lemma~\ref{#1}}
\newcommand{\Lrs}[1]{Lemmas~\ref{#1}}
\newcommand{\Tr}[1]{Theorem~\ref{#1}}
\newcommand{\Trs}[1]{Theorems~\ref{#1}}
\newcommand{\Sr}[1]{Section~\ref{#1}}
\newcommand{\Srs}[1]{Sections~\ref{#1}}
\newcommand{\Prr}[1]{Pro\-position~\ref{#1}}
\newcommand{\Prb}[1]{Problem~\ref{#1}}
\newcommand{\Cr}[1]{Corollary~\ref{#1}}
\newcommand{\Cnr}[1]{Con\-jecture~\ref{#1}}
\newcommand{\Or}[1]{Observation~\ref{#1}}
\newcommand{\Er}[1]{Example~\ref{#1}}
\newcommand{\Dr}[1]{De\-fi\-nition~\ref{#1}}

\newcommand{\kreis}[1]{\mathaccent"7017\relax #1}

\newcommand{\ttt}{\ensuremath{\mathbb T}}
\newcommand{\sydi}{\triangle}
\newcommand{\Gc}{\ensuremath{\overline{G}}}
\newcommand{\cells}{\ensuremath{G^2}}
\newcommand{\BS}{\ensuremath{\mathbb S}}
\renewcommand{\PF}{\ensuremath{\mathcal P}}

\begin{abstract}
We show that a group admits a planar, finitely generated \Cg\ if and only if it admits a special kind of group presentation we introduce, called a planar presentation. Planar presentations can be recognised algorithmically. As a consequence, we obtain an effective enumeration of the planar Cayley graphs, yielding in particular an affirmative answer to a question of Droms et al.\ asking whether the planar groups can be effectively enumerated. 
\end{abstract}

\section{Introduction}

In this paper we complete an effort, started in \cite{planarpresI}, and building upon \cite{cayley3,cay2con}, the aim of which is to understand the planar \Cg s. In \cite{planarpresI} we handled the special case of 3-connected \Cg s, and more generally, \Cg s $G$ that admit a {\em consistent} embedding in $\R^2$, that is, an embedding the facial paths of which are preserved by the action on $G$ by its group (see \Sr{secDem} for a more detailed definition).
It is shown in a follow-up paper in preparation \cite{fpd} that, at least in the finitely generated case, the groups having such \Cg s  are exactly those  groups admitting a faithful, properly discontinuous action by homeomorphisms on a 2-manifold contained in  $\BS^2$. It is  shown in the same paper that there is a planar \Cg\ the group of which cannot act faithfully and properly discontinuously on $\BS^2$. Therefore, the aforementioned groups form a proper subclass of the \emph{planar groups}, i.e.\ the groups admitting a planar \Cg. In this paper we broaden the group presentations introduced in  \cite{planarpresI} so that we capture exactly the planar finitely generated \Cg s. In particular, we capture the planar groups, and we show that they can be effectively enumerated, answering a question of Droms et al.\ \cite{DroInf,DrSeSeCon}. 

\medskip

The \emph{Cayley complex} $X$ corresponding to a group presentation $\PF=\left< \SF \mid \mathcal \RF \right>$ is the 2-complex obtained from the \Cg\ $G$ of $\PF$ by glueing a 2-cell along each closed walk of $G$ induced by a relator $R\in \RF$. We say that $X$ is \emph{almost planar}, if it admits a map $\rho: X \to \R^2$ \st\ the 2-simplices of $X$ are \emph{nested} in the following sense. We say that  two 2-simplices of $X$ are nested, if the images of their interiors are either disjoint, or one is contained in the other, or their intersection is the image of a 2-cell bounded by two parallel edges corresponding to an involution $s\in \SF$.\footnote{The third option can be dropped by considering the \emph{modified} Cayley complex in the sense of \cite{LyndonSchupp}, i.e.\ by representing involutions in $\SF$ by single, undirected edges.} We call the presentation $\PF$ a \emph{\plpr} if its Cayley complex is almost planar. We will show that every planar, finitely generated \Cg\ admits a \plpr.
However, we prove something much stronger than that. We are going to introduce a specific type of \plpr, called a \emph{\gplpr}, and show that every planar, finitely generated \Cg\ admits such a
presentation and, conversely, every \gplpr\ has a planar \Cg\ (\Tr{Gplanar}). This converse is the hardest result of this paper. Our main result is:
\begin{thm} \label{mainthm}
A finitely generated group admits a planar \Cg\ if and only if it admits a \glplpr. 
\end{thm}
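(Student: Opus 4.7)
The plan is to derive Theorem~\ref{mainthm} directly from \Tr{Gplanar}, which already packages both implications of the equivalence in stronger form (a \gplpr\ is a particular kind of \plpr, and \Tr{Gplanar} asserts that the class of groups arising from the two presentation types is the same). The real content lies in \Tr{Gplanar}, whose two directions I would attack separately.

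For the forward direction (every planar finitely generated \Cg\ admits a \gplpr), the approach is constructive. Given a planar, finitely generated \Cg\ $G$ with embedding $\rho : G \to \R^2$, extract relators from the facial walks of $\rho$. Since the action on $G$ need not preserve $\rho$, one must first use the structural results of \cite{cayley3,cay2con} to choose a finite family of facial walk representatives whose orbit, together with the involution relators, already presents the group. The corresponding Cayley complex then carries a natural map into $\R^2$: glue each 2-cell into the interior of the face it was read off from. Almost planarity reduces to pairwise nesting of all translates of these glued cells, which follows from the fact that $\rho$ itself was a genuine embedding of the 1-skeleton. The main step here is to verify that this construction satisfies the extra conditions distinguishing a \gplpr\ from an arbitrary \plpr; these presumably encode information about cyclic orders of edges and spin-behaviour at vertices that one reads off from $\rho$.

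For the converse (every \gplpr\ has a planar \Cg), the obstacle is that the map $\rho : X \to \R^2$ witnessing almost planarity is far from an embedding: its restriction to the 1-skeleton may identify edges or produce crossings, and distinct 2-simplices may coincide under $\rho$. The plan would be to refine $\rho$ into a genuine embedding of the 1-skeleton by iteratively resolving conflicts between overlapping 2-simplices using the nesting relation. A natural way to organize this is to decompose $G$ along separators into pieces small enough to embed directly (reducing when possible to the 3-connected / consistently embedded case treated in \cite{planarpresI}), and then to reassemble a global embedding along the decomposition tree, using nesting to glue pieces coherently. The group action keeps the bookkeeping finite: only finitely many pieces arise modulo the action, and the compatibility conditions in the definition of \gplpr\ are designed precisely so that these local embeddings extend globally without crossings.

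I expect this converse to be the principal obstacle, in agreement with the authors' remark. In particular, pinning down which extra hypothesis turns a mere \plpr\ (whose Cayley complex immerses into $\R^2$ with nested 2-cells) into a \gplpr\ (which forces the 1-skeleton itself to embed) looks like the central conceptual input, and unwinding this distinction is likely to occupy most of the paper.
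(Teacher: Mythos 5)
Two problems stand out. First, you misread \Tr{Gplanar}: it states only the backward direction (the \Cg\ of every \gcplpr\ is planar), not the equivalence, so it cannot do the work you assign it; the forward direction is an entirely separate argument. Second, the forward direction as you sketch it would fail. The paper does not try to read a \gcplpr\ directly off an arbitrary planar \Cg\ $G$; it first passes (Lemmas~\ref{cg1con} and~\ref{cg2con}) to a Tietze-supergraph $H\supseteq G$ that is $2$-connected and \insep\ (every $2$-separator is an edge). This step is indispensable: the whole spin-structure formalism requires hinges to be edges, and a $1$-connected or non-\insep\ graph cannot satisfy the blockedness condition (P1). Only then does \Tr{thm_Con2Back} produce a \gcplpr, and removing the obviously redundant extra generators afterwards yields a \glplpr\ --- which is precisely why the theorem is phrased with \emph{general} rather than \emph{generic} planar presentations, a distinction your proposal ignores. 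Moreover, simply taking facial walks of the fixed embedding $\rho$ does not produce a workable relator set in the general case, because the group action need not preserve $\rho$, so translates of facial walks are not facial and the orbit of a facial-walk relator need not be nested. The paper instead combines the Tutte $2$-separator decomposition with \Tr{thm_fundgr} (nested, indecomposable, group-invariant generating sets of $\pi_1$ of $3$-connected planar graphs), applied block by block, and that is what guarantees condition (P2).

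On the converse, your sketch is closer in spirit, but you locate the decomposition in $G$ itself, whereas the paper carries it out in the universal cover tree $\ttt$. The pre-blocks of $\ttt$ and the embedding $\rho:\ttt\to\R^2$ are built purely from the presentation data, the planarity of each block of $G$ is then proved by adapting the argument of \cite[Theorem~4.2]{planarpresI} (\Lr{clplanar}), and \emph{only afterwards} does one show that the declared hinges actually separate $G$ (\Lr{hinsep}). That last fact is genuinely a theorem, not an input: before \Tr{Gplanar} is established there is no embedding of $G$ to read separators off of, and it is not a priori clear that a hinge in the spin structure corresponds to a separating pair of vertices in the \Cg. Without this, the ``decompose $G$ and reassemble along the decomposition tree'' strategy has no decomposition to start from.
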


The main idea of its proof is that if two relators in a presentation induce cycles whose interiors overlap but are not nested (in a sense similar to the nestedness of 2-simplices), then we replace a subword of one relator by a subword of the other to produce an equivalent presentation with less overlapping; our proof that a presentation with no such overlaps exists is based on a dual version of the machinery of Dunwoody cuts~\cite{dicks_dunw}, but for cycles instead of cuts.

As a corollary of Theorem~\ref{mainthm} we obtain that the planar \Cg s, and hence their groups, can be effectively enumerated (\Tr{thm_maincon2}). This answers a question of Droms et al.\ \cite{DroInf,DrSeSeCon}. M.~Dunwoody (private communication) informs us that the fact that the planar groups  can be effectively enumerated should also follow from his result \cite[Theorem~3.8]{dunPla} with a little bit of additional work (the main issue here is whether the `or a subgroup of index two' proviso can be dropped).

For more on the motivation of this work and the general background we refer the reader to  \cite{planarpresI}. We note that while it may help the reader to know~\cite{planarpresI}, the present paper is self-contained.

\subsection{\Plpr s}
The formal definition of a \glplpr\ is given in \Sr{secConc} and it is a slight generalisation of the notion of a \emph{generic} \plpr\  defined in \Sr{secpltygen}. Here, we are going to sketch the most interesting special case of this concept, called a \splpr. Such presentations always exist for a 3-connected planar \Cg, or more generally, for a \Cg\ that can be embedded in the plane in such a way that its label-preserving automorphisms carry facial paths to facial paths.

We say that $\PF=\left< \SF \mid \mathcal \RF \right>$ is a \emph{\splpr}, if it can be endowed with a cyclic ordering $\sig$ ---from now on called a \emph{spin}--- of the symmetrization $\SF'=\{s,s^{-1} \mid s \in \SF\}$ of its generating set, with the following property.
Suppose $W_1=sUt$ and $W_2=s'Ut'$, where $s,s',t,t'\in \SF'$, are two words, each contained in some rotation of a relator in $\RF$  (possibly the same relator), where $U$ is any (possibly trivial) word with letters in $\SF'$. Then $\sig$ allows us to say whether paths induced by these words $W_1,W_2$ would \emph{cross} each other or not if we could embed the \Cg\ of $\PF$ in the plane in such a way that for every vertex the cyclic ordering of the labels of its incident edges we observe coincides with $\sig$. To make this more precise, we embed a tree consisting of a `middle' path $P$ with edges labelled by the letters in $U$, and two leaves attached at each endvertex of~$P$ labelled with $s,s',t,t'$ as in \fig{stu}, where the spin we use at each endvertex of~$P$ is the one induced by $\sig$ on the corresponding 3-element subset of $\SF'$. There are essentially two situations that can arise, both shown in that figure. Naturally, we say that $W_1,W_2$ \emph{cross} each other in the right-hand situation, and they do not in the left-hand one.

\showFig{stu}{The definition of \emph{crossing}; $W_1=sUt$ crosses $W_2=s'Ut'$ in the right, but not in the left.}

We then say that $\PF$ is a \splpr, if there is a spin $\sig$ on $\SF'$ \st\ no two words as above cross each other. Note that this is an abstract property of sets of words, and it is defined without reference to the \Cg\ of $\PF$; in fact, it can be checked algorithmically. The essence of this paper is that this is enough to guarantee the planarity of the \Cg, and that a converse statement holds. 

This generalises an idea from \cite{vapf}, where it was shown that every planar discontinuous group admits a \splpr\ where every relator is \emph{facial}, i.e.\ it crosses no other word (where we consider words that are not necessarily among our relators).

Our actual definition of a \splpr, given in \Sr{secPlpr}, is in fact a bit more general than the above sketch. Consider for example the \Cg\ of the presentation $\left<a,b \mid a^n, b^2, aba^{-1}b\right>$. Its \Cg\ is a prism graph with an essentially unique embedding in $\R^2$. Note that the spin of half of its vertices is the reverse of the spin of other half. This is a general phenomenon: every 3-connected \Cg\ has an essentially unique embedding, and in that embedding all vertices have the same spin \emph{\utr}. However, for every generator $s$, either the two end-vertices of all edges labelled $s$ have the same spin, or they always have reverse spins. This yields a classification of generators into \emph{\spre} and \emph{\srev} ones, and our definition of a \splpr\ takes this into account; still, everything can be checked algorithmically.

The situation becomes much more complex however if one wants to consider planar \Cg s that are not 3-connected. Such graphs do not always have an embedding with all vertices having the same spin \utr; perhaps the simplest such example is the one of \fig{DSS}. 
\showFig{DSS}{A 2-connected planar \Cg\ from \cite{DrSeSeCon}, obtained by amalgamating two 6-element groups along an involution, which does not admit a consistent embedding.}

In order to capture such \Cg s we had to come up with the notion we call a \emph{\glplpr} (defined in \Sr{secConc}), which in particular translates, into  abstract, algorithmically checkable, properties of words as above situations as in \fig{DSS}, where a certain generator $s$ with $s^2=1$ separates the graph into two parts, and behaves in a \spre\ way in one part and in a \srev\ way in the other part. That such \glplpr s always give rise to planar \Cg s is the hardest result of this paper, many of its complications arising from the fact that given a \glplpr\ with such a `separating' generator $s$, it is impossible to predict whether $s=1$, which would imply that our \Cg\ does not quite have the structure anticipated by the presentation. The situation is complicated further by the fact that separating generators need not be involutions; an example is given in \fig{BlockEx}.

\showFig{BlockEx}{An (infinite) planar \Cg, corresponding to the presentation $\left<a,b,c,d,f,g \mid a^2,c^2,d^2,f^2,g^2,(af)^2,(ag)^2,abab\inv gbfb\inv, cbdb\inv\right>$, with a separating edge $b$ which is not an involution.}

\medskip
This paper is structured as follows. After some general definitions in \Sr{sec_defs}, we introduce \gcplpr s in \Sr{secpltygen}, and show that every \Cg\ of every \gcplpr\ is planar in \Sr{secpltygenproof}. In \Sr{gplprconv} we prove the reverse direction, i.\,e.\ that every planar \Cg\ admits a \gcplpr. In \Sr{secConc} we slightly generalise from \emph{generic} to \emph{general} \plpr s, and 
put those facts together to obtain the results stated above. 
We finish with some open problems in Sections~\ref{secConc} and~\ref{secFR}.

\section{Definitions}\label{sec_defs}

\subsection{\Cg s and group presentations} \label{defpres}
We will follow the terminology of~\cite{diestelBook05} for graph-theoretical terms and that of~\cite{bogop} and~\cite{GroupsGraphsTrees} for group-theoretical ones. Let us recall the definitions most relevant for this paper.

A {\em group presentation} $\left< \SF \mid \mathcal \RF \right>$ consists of a set $\SF$ of distinct symbols, called the  {\em generators} and a set $\RF$ of words with letters in $\SF \cup \SF^{-1}$, where $\SF^{-1}$ is the set of symbols $\{s^{-1} \mid s \in \SF\}$, called  {\em relators}. Each such group presentation uniquely determines a group, namely the  quotient group $F_\SF/N$ of the (free) group $F_\SF$ of words with letters in $\SF \cup \SF^{-1}$ over the (normal) subgroup $N=N(\RF)$ generated by all conjugates of elements of $\RF$.

The \Cg\  $Cay(\PF)= Cay \left< \SF \mid \mathcal \RF \right>$  of  a group presentation $\PF= \left< \SF \mid \mathcal \RF \right>$
is an edge-coloured directed graph $\g= (V,E)$ constructed as follows. The vertex set of \g is the group $\Gam=F_\SF /N$ corresponding to $\PF$. The set of {\em colours} we will use is $\SF$.  For every $g\in \Gam, s\in \SF$ join $g$ to $gs$ by an edge coloured $s$ directed from $g$ to $gs$. Note that $\Gam$ acts on \g by multiplication on the left; more precisely, \fe\ $g\in \Gam$ the mapping from $V(G)$ to $V(G)$ defined by $x \mapsto gx$ is an {\em automorphism} of \G, that is, an automorphism of \g that preserves the colours and directions of the edges. In fact, \Gam\ is precisely the group of such automorphisms of \G. Any presentation of \Gam\ in which $\SF$ is the set of generators will also be called a presentation of $Cay(\PF)$.

Note that some elements of $\SF$ may represent the identity element of \Gam, and distinct elements  of $\SF$ may represent the same element of \Gam; therefore, $Cay(\PF)$ may contain loops and parallel edges of the same colour.

If $s\in \SF$ is an {\em involution}, i.e.\ $s^2=1$, then every vertex of \g is incident with a pair of parallel edges coloured $s$ (one in each direction). 
If $s^2$ is a relator in~$\RF$, we will follow the convention of replacing this pair of parallel edges by a single, undirected edge.
This convention is common in the literature \cite{LyndonSchupp}, and is convenient when  studying planar \Cg s.
\medskip

If \g is a \Cg, then we use $\Gam(G)$ to denote its group.

\medskip
If $R$ is any  (finite or infinite) word with letters in $\SF \cup \SF^{-1}$, and $g$ is a vertex of $G=Cay \left< \SF \mid \mathcal \RF \right>$, then starting from $g$ and following the edges corresponding to the letters in $R$ in order we obtain a walk $W$ in $G$. We then say that $W$ is {\em induced} by $R$ at $g$, and we will sometimes denote $W$ by $gR$; note that for a given $R$ \ta\ several walks in \g induced by $R$, one for each starting vertex $g\in V(G)$. 

Let $H_1(G)$ denote the first simplicial homology group of \g over $\Z$. 
We will use the following well-known fact which is easy to prove.
\begin{lem} \label{relcc}
Let $G= Cay \left< \SF \mid \RF \right>$ be a \Cg. Then the (closed) walks in \g induced by relators in $\RF$ generate $H_1(G)$. 
\end{lem}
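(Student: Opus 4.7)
The plan is to reduce the statement to the defining property of the normal subgroup $N=N(\RF)\le F_\SF$, using the fact that, as a 1-complex, $G$ has trivial 2-chains, so $H_1(G)$ is exactly the cycle space of $G$ and is generated (as an abelian group) by closed walks.

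First I would fix a basepoint argument. Any 1-cycle in $G$ can be written as a $\Z$-linear combination of closed walks based at some chosen vertex, because $G$ is connected (each closed walk elsewhere can be conjugated by a path to the basepoint at no cost in $H_1$, as the conjugating path is traversed twice in opposite directions and cancels in the cycle space). So it suffices to show that every closed walk based at the identity $1\in \Gam=V(G)$ is, modulo the cycle relations, a sum of walks induced by relators.

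Next, given such a closed walk $W$ based at $1$, I read off the corresponding word $w\in F_\SF$ by recording the labels and directions of the edges traversed. Since $W$ ends at $1$, we have $w=1$ in $\Gam$, so $w\in N$. By definition $N$ is generated by conjugates of elements of $\RF$, so there is an identity
\[
w = \prod_{i=1}^{k} c_i\, R_i^{\varepsilon_i}\, c_i^{-1}
\]
in $F_\SF$, with $R_i\in\RF$, $\varepsilon_i\in\{\pm 1\}$ and $c_i\in F_\SF$. Reading each factor $c_i R_i^{\varepsilon_i} c_i^{-1}$ as a closed walk at $1$, it starts at $1$, traces the walk labelled $c_i$ to the vertex $c_i$, goes around the closed walk induced by $R_i^{\varepsilon_i}$ based at $c_i$, and returns to $1$ along the walk labelled $c_i^{-1}$. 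The out-and-back portions along $c_i$ and $c_i^{-1}$ use each edge the same number of times with opposite orientations, so they cancel in the cycle space. Hence in $H_1(G)$ this factor equals (up to sign from $\varepsilon_i$) the walk induced by $R_i$ at the vertex $c_i$, which is exactly a walk of the form allowed by the statement.

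Finally, concatenation of closed walks at a common basepoint corresponds to addition in $H_1(G)$, so the factorisation of $w$ above yields $[W]=\sum_i \varepsilon_i [c_i R_i]$ in $H_1(G)$, completing the argument. The one point needing a little care, which I would state explicitly, is the cancellation of the conjugating walks $c_i,c_i^{-1}$ in the cycle space; this is immediate from the definition of the boundary map on a 1-complex, since reversing orientation negates the chain of an edge. There is no real obstacle here; the lemma is essentially a translation of the standard presentation-theoretic fact that $N(\RF)$ is the normal closure of $\RF$ into the language of the cycle space of the Cayley graph.
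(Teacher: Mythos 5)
The paper does not give a proof of this lemma, dismissing it as a ``well-known fact which is easy to prove,'' so there is nothing in the source to compare against; your argument is the standard one and is correct. You reduce to closed walks at a basepoint via connectedness (equivalently, surjectivity of the Hurewicz map $\pi_1(G,1)\to H_1(G)$), observe that the corresponding word lies in $N(\RF)$ and hence factors as a product of conjugates $c_i R_i^{\pm1}c_i^{-1}$ in $F_\SF$, and note that each conjugating path cancels in the cycle space, leaving $\pm$ the class of the walk $c_i R_i$ induced by $R_i$ at the vertex $c_i$. The only point worth flagging explicitly, which you gesture at, is that the equality $w=\prod_i c_iR_i^{\varepsilon_i}c_i^{-1}$ holds in $F_\SF$ only after free reduction, but spikes created or destroyed by reduction are null-homologous for the same reason the conjugating out-and-back paths are, so this causes no trouble.
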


\subsection{Graph-theoretical concepts}
Let $G=(V,E)$ be a connected graph fixed throughout this section. Two paths in \g are {\em independent}, if they do not meet  at any vertex except perhaps at common endpoints. 
If $P$ is a path or cycle we will use $|P|$ to denote the number of vertices in $P$ and  $||P||$ to denote the number of edges of $P$. Let $xPy$ denote the subpath of $P$ between its vertices $x$ and $y$.


A {\em hinge} of \g is an edge $e=xy$ \st\ the removal of the pair of vertices $x,y$  disconnects \G. A hinge should not be confused with a {\em bridge}, which is an edge whose removal separates \g  although its endvertices are not removed.

The set of neighbours of a vertex $x$ is denoted by {\em $N(x)$}.

\G\ is called {\em $k$-connected} if $G - X$ is connected for every set $X\subseteq V$ with $|X | < k$. Note that if \g is $k$-connected then it is also $(k-1)$-connected. The {\em connectivity $\kappa(G)$} of \g is the greatest integer $k$ such that \G\ is $k$-connected.

A $1$-way infinite path is called a {\em ray}. Two rays are {equivalent} if no finite set of vertices separates them. The corresponding equivalence
classes of rays are the {\em ends} of $G$. A graph is {\em multi-ended} if it has more than one end. Note that given any two finitely generated presentations of the same group, the corresponding \Cg s have the same number of ends. Thus this number, which is known to be one of $0,1,2, \infty$, is an invariant of finitely generated groups.

A {\em \dray} is a directed $2$-way infinite path.

The set of all finite sums of (finite) cycles forms a vector space over $\mathbb F_2$, the \emph{cycle space} of~$G$.

\subsection{Embeddings in the plane} \label{secDem}	

An {\em embedding} of a graph \g will always mean a topological embedding of the corresponding 1-complex in the euclidean plane $\R^2$; in simpler words, an embedding is a drawing in the plane with no two edges crossing.

A {\em face} of an embedding $\sig: G \to \R^2$ is a component of $\R^2 \sm \sig(G)$. The {\em boundary} of a face $F$ is the set of vertices and edges of \g that are mapped by \sig\ to the closure of $F$. The {\em size} of $F$ is the number of edges in its boundary. Note that if $F$ has finite size then its boundary is a cycle of \G.

A walk in \g is called {\em facial} with respect to \sig\ if it is contained in the boundary of some face of \sig. 

An embedding of a \Cg\ is called {\em \cns} if, intuitively, it embeds every vertex in a similar way in the sense that the group action carries faces to faces. Let us make this more precise.
Given an embedding \sig\ of a \Cg\ $G$ with generating set $S$, we consider \fe\ vertex $x$ of \g the embedding of the edges incident with $x$, and define the {\em spin} of $x$ to be the cyclic order of the set $L:=\{xy^{-1} \mid y\in N(x)\}$ in which $xy_1^{-1}$ is a successor of $xy_2^{-1}$ whenever the edge $xy_2$ comes immediately after the edge $xy_1$ as we move clockwise around~$x$. Note that the set $L$ is the same \fe\ vertex of $G$, and depends only on~$S$ and on our convention on whether to draw one or two edges per vertex for involutions. This allows us to compare spins of different vertices. Call an edge of \g {\em spin-preserving} if its two endvertices have the same spin in \sig, and call it {\em spin-reversing} if the spin of one of its endvertices is the reverse of the spin of its other endvertex. Call a colour in $S$ {\em \cns} if all edges bearing that colour are  spin-preserving or all edges bearing that colour are spin-reversing in \sig. Finally, call the embedding \sig\ {\em \cns} if every colour is \cns\ in \sig. Note that if \sig\ is \cns, then there are only two types of spin in \sig, and they are the reverse of each other.



The following classical result was proved by Whitney \cite[Theorem~11]{whitney_congruent_1932} for finite graphs and by Imrich \cite{ImWhi} for infinite ones.

\begin{thm} \label{imrcb}
Let \g be a 3-connected graph embedded in the sphere. Then every automorphism of \g maps each facial path to a facial path.
\end{thm}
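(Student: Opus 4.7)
The plan is to reduce this to a purely combinatorial characterisation of face boundaries which is manifestly invariant under automorphisms. The classical tool is Whitney's lemma: in a finite $3$-connected planar graph, a cycle $C$ bounds a face in some (equivalently every) planar embedding if and only if $C$ is an induced cycle such that $G-V(C)$ is connected, i.e.\ $C$ is a \emph{peripheral} (or \emph{non-separating induced}) cycle. Both conditions refer only to the abstract graph, so any automorphism $\alpha$ of $G$ maps peripheral cycles to peripheral cycles, hence face boundaries to face boundaries. Since a facial path $P$ is, by definition, contained in a facial walk, and in the finite case every facial walk is a peripheral cycle, the image $\alpha(P)$ is contained in $\alpha(C)$, which is again a face boundary; this settles the finite case.

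For the infinite case handled by Imrich, the main complication is that a face may have an infinite boundary: a double ray (two rays converging to the same pair of ends of $G$, or to a single end and closing up there). Here I would characterise facial walks in two steps. First, I would show that each edge $e=xy$ lies in exactly two \emph{locally facial} walks in the following sense: using $3$-connectedness, every edge has a pair of ``sides'' which can be recovered combinatorially by looking at the peripheral cycles or, for infinite faces, at suitable nested sequences of induced non-separating paths. Second, I would exhaust $G$ by an ascending sequence $G_1\subseteq G_2\subseteq\dots$ of finite $3$-connected subgraphs whose embeddings are induced by the embedding of $G$ (possible by a standard construction since a $3$-connected planar graph has an essentially unique embedding on the sphere), apply the finite Whitney theorem to each $G_n$, and glue the results via a K\"onig-type compactness argument to conclude that a walk is facial in $G$ iff each of its finite sub-paths is facial in all sufficiently large $G_n$. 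Since this characterisation is again entirely combinatorial, it is preserved by every automorphism $\alpha$ of $G$.

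The main obstacle is the second step: one must verify that the finite approximations $G_n$ can be chosen $3$-connected with embeddings compatible with the one of $G$, and that the combinatorial characterisation survives the passage to the limit, including for face boundaries that ``close up'' at ends. This is essentially Imrich's technical contribution, and it requires a careful analysis of how ends interact with face boundaries; everything else, including the deduction that facial paths are preserved, is then routine. An alternative route, which avoids exhaustion, is to work in the Freudenthal compactification and use the topological cycle space: infinite face boundaries become topological circles, and the ``peripheral'' characterisation extends verbatim, after which the automorphism-invariance argument is identical to the finite case.
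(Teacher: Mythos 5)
The paper cites this theorem as a classical result (Whitney for finite graphs, Imrich for infinite ones) and gives no proof of its own, so there is no in-paper proof to compare against. Your sketch for the finite case is correct and is the standard argument: Whitney's combinatorial characterisation of face boundaries in a $3$-connected planar graph as the non-separating induced (peripheral) cycles is manifestly automorphism-invariant, and a facial path is a subwalk of such a cycle, so its image under any automorphism is again facial.

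For the infinite (locally finite) case, your proposal is an honest roadmap rather than a proof, and you say so yourself. The genuine gap is in the equivalence you assert for the exhaustion route, namely that a walk is facial in $G$ iff each of its finite subpaths is facial in all sufficiently large $G_n$. A face of $G_n$ need not correspond to a face of $G$: when an infinite face of $G$ (say bounded by a double ray) is cut off in the finite approximation $G_n$, the local face structure near that double ray changes, so an edge can be facial in $G_n$ for all $n$ along some spurious boundary without lying on a face boundary of $G$, and conversely. Making this work requires an argument about how ends and infinite face boundaries interact with the exhaustion, which is exactly Imrich's technical contribution; it cannot simply be asserted. You also take for granted that one can choose the $G_n$ to be $3$-connected with embeddings induced by the embedding of $G$; this is known but nontrivial. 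Your alternative route through the Freudenthal compactification---treating infinite face boundaries as topological circles and extending the peripheral characterisation---is conceptually cleaner and sidesteps the compatibility issue, and is probably the better way to turn this sketch into a proof, but it too would need the relevant facts about the topological cycle space of locally finite $3$-connected planar graphs to be quoted precisely rather than invoked informally.
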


This implies in particular that if \sig\ is an embedding of the 3-connected \Cg\ \G, then the cyclic ordering of the colours of the edges around any vertex of \g is the same up to orientation. In other words, at most two spins are allowed in \sig. Moreover, if two vertices  $x,y$ of \g that are adjacent by an edge, bearing a colour $b$ say, have distinct spins, then any two vertices $x',y'$ adjacent by a $b$-edge also have distinct spins. We just proved
\begin{lem} \label{lprem}
 Let \g be a 3-connected planar \Cg. Then every embedding of \g is \cns.
\end{lem}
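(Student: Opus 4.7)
The lemma is essentially a direct consequence of \Tr{imrcb} together with the vertex-transitive action of $\Gam(G)$ on $G$, so my proof plan will be short. The main task is to unpack how Whitney--Imrich forces every label-preserving automorphism to act on spins in a uniform way across the whole graph, and then to exploit this uniformity together with edge-transitivity on each colour class.

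First I would invoke \Tr{imrcb}: since $G$ is 3-connected and planar, every automorphism of $G$ sends facial walks of $\sig$ to facial walks. Applying this to an arbitrary element $g\in\Gam(G)$ acting on $G$, I would observe that the combinatorial data of $\sig$ (vertices, edges, facial walks) is preserved by $g$, so $g$ extends to a self-homeomorphism $\tilde g$ of the sphere $\BS^2$ (after adding a point at infinity). This homeomorphism is either orientation-preserving everywhere or orientation-reversing everywhere. Consequently the spin at every vertex $x$ is either mapped to the spin at $gx$ (if $\tilde g$ preserves orientation) or to its reverse (if $\tilde g$ reverses orientation); moreover, this $\pm$ dichotomy is a property of $g$ alone, independent of the chosen vertex $x$.

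Next I would fix a colour $b\in \SF$ and an arbitrary pair of $b$-edges $e=xy$ and $e'=x'y'$ of $G$. Since $\Gam(G)$ acts transitively on vertices and preserves colours, there is $g\in\Gam(G)$ with $g(x)=x'$ and $g(y)=y'$. By the previous step, $\tilde g$ is either orientation-preserving, in which case the spin at $x'$ equals the spin at $x$ and the spin at $y'$ equals the spin at $y$; or $\tilde g$ is orientation-reversing, in which case both spins are simultaneously reversed by $g$. In either case, \emph{the spin at $x'$ equals the spin at $y'$ if and only if the spin at $x$ equals the spin at $y$}. In other words, $e$ is spin-preserving iff $e'$ is spin-preserving.

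Thus for every colour $b$, either all $b$-edges are spin-preserving or all $b$-edges are spin-reversing, which is exactly the definition of $\sig$ being consistent. I do not expect a real obstacle here: the only delicate point is justifying that $\tilde g$ has a well-defined global orientation type, and this is immediate once one extends $g$ across the faces using that $g$ permutes facial walks (via \Tr{imrcb}).
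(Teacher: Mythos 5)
Your argument is essentially the same as the paper's: both rest on \Tr{imrcb} plus vertex-transitivity, and the paper's terse proof implicitly relies on exactly the orientation dichotomy you make explicit. The one point worth flagging is your justification of that dichotomy via an extension of $g$ to a self-homeomorphism $\tilde g$ of $\BS^2$: for infinite planar graphs the faces need not all be open disks (there can be vertex accumulation points), so the extension of $g$ across faces requires a little care. A cleaner and fully general route to the same conclusion is combinatorial rather than topological: in a connected plane graph the rotation system determines the facial walks and, conversely, the collection of facial walks determines the rotation system up to a single global reversal; hence any automorphism that permutes facial walks (as $g$ does by \Tr{imrcb}) must either fix every spin or reverse every spin. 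Once that dichotomy is in hand your step from transitivity to consistency of each colour class is exactly right.
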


\Cg s of connectivity 2 do not always admit a \cnsem~\cite{DrSeSeCon}. However, in the cubic case they do; see \cite{cay2con}.

An embedding is {\em Vertex-Accumulation-Point-free}, or {\em \vapf} for short, if the images of the vertices have no accumulation point in $\R^2$.

A {\em crossing} of a path $X$ by a path or walk $Y$ in a plane graph is a subwalk $Q=e\kreis{Q}f$ of $Y$ where the end-edges $e,f$ of $Q$ are incident with $X$ on opposite sides of $X$ (but not contained in $X$) and (the image of) $Q$ is contained in $X$ (\fig{XYQ}). Note that if $Q$ is a crossing of $X$ by $Y$, then $X$ contains a crossing $Q'=g\kreis{Q}h$ of $Y$ by $X$, which we will call the \emph{dual crossing} of $Q$.

\showFig{XYQ}{A crossing of~$X$ by~$Y$.}

For a closed walk $W$ and $n\in\nat$, let $W^n$ be the $n$-times concatenation of~$W$.
Two closed walks $R$ and $W$ \emph{cross} if there are $i,j\in\nat$ such that $R^i$ contains a crossing of a subwalk of~$W^j$.
They are \emph{nested} if they do not cross.

\subsection{Fundamental groups of planar graphs}

Let $G$ be a graph.
The \emph{sum} of two walks $W_1,W_2$ where $W_1$ ends at the starting vertex of~$W_2$ is their concatenation.
Let $W=x_1x_2\ldots x_n$ be a walk.
Its \emph{inverse} is $x_n\ldots x_1$.
If $x_{i-1}=x_{i+1}$ for some~$i$, we call the walk $W':=x_1\ldots x_{i-1}x_{i+2}\ldots x_n$ a \emph{reduction} of~$W$.
Conversely, we \emph{add} the \emph{spike} $x_{i-1}x_ix_{i+1}$ to~$W'$ to obtain~$W$.
If $W$ is a closed walk, we call $x_i\ldots x_nx_1\ldots x_{i-1}$ a \emph{rotation} of~$W$.

Let $\VF$ be a set of closed walks.
The smallest set $\overline{\VF}\supseteq \VF$ of closed walks that is invariant under taking sums, reductions and rotations and under adding spikes is the set of closed walks \emph{generated by $\VF$}.
We also say that any $V\in\overline{\VF}$ is \emph{generated by~$\VF$}.
A closed walk is \emph{indecomposable} if it is not generated by closed walks of strictly smaller length.
Note that no indecomposable closed walk $W$ has a \emph{shortcut}, i.\,e.\ a (possibly trivial) path between any two of its vertices that has smaller length than any subwalk of any rotation of~$W$ between them.
In particular, indecomposable closed walks induce cycles.

For any $\eta\in\pi_1(G)$, let $W_\eta\in\eta$ be the unique reduced closed walk in~$\eta$ and $W_\eta^\circ$ be its (unique) cyclical reduction.
For $\VF\sub\pi_1(G)$, set
\[
\VF^\circ:=\{W_\eta^\circ\mid\eta\in\VF\}.
\]
By $\WF(G)$ we denote the set of all closed walks in~$G$.

The following theorem is an immediate consequence of~\cite[Theorem 6.2]{planarCycles2}, which is a generalisation of the main theorem of~\cite{planarCycles}.

\begin{thm}\label{thm_fundgr}
Let $G$ be a planar locally finite $3$-connected graph and $\Gamma$ a group acting on~$G$.
Then $\pi_1(G)$ has a generating set $\VF$ such that $\VF^\circ$ is a $\Gamma$-invariant nested generating set for $\WF(G)$ that consists of indecomposable closed walks.
\end{thm}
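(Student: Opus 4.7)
The plan is to reduce the statement directly to the cited \cite[Theorem~6.2]{planarCycles2}. I would first verify that its hypotheses are exactly those here (locally finite, $3$-connected, planar $G$ acted on by~$\Gam$). The cited result produces a $\Gam$-invariant nested family $\FF$ of cycles (equivalently, indecomposable cyclically reduced closed walks) of~$G$ that generates the cycle space. I would then upgrade this to a subset $\VF\sub\pi_1(G)$ by choosing, for each $C\in\FF$, a free-homotopy representative $\eta_C\in\pi_1(G)$ whose reduced closed walk is a rotation of~$C$; setting $\VF:=\{\eta_C\mid C\in\FF\}$, the set $\VF^\circ$ coincides with~$\FF$.

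The three required properties of $\VF^\circ$ are then inherited from the corresponding properties of~$\FF$ supplied by the cited theorem. \emph{Indecomposability} of each $V\in\VF^\circ=\FF$ is given directly. \emph{$\Gam$-invariance} is inherited because cyclic reduction commutes with the $\Gam$-action, so the lifted family of homotopy classes can be organised into $\Gam$-orbits and $\VF^\circ$ remains $\Gam$-invariant. \emph{Nestedness} in the sense of Section~2 (no crossings of $R^i$ by subwalks of $W^j$ for $R,W\in\VF^\circ$) corresponds, under the fixed planar embedding of~$G$, to the geometric non-crossing of the corresponding cycles; since each $V\in\VF^\circ$ is shortcut-free (as noted in the paragraph preceding the theorem, this is a consequence of indecomposability), any two cycles in $\VF^\circ$ share at most a common subpath and hence cannot cross in the combinatorial sense defined here.

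It then remains to verify that $\VF^\circ$ generates all of $\WF(G)$ under sums, reductions, rotations and spike additions. An arbitrary closed walk reduces, via spike removals and rotations, to a cyclically reduced closed walk, which lies in the cycle space; this in turn is a sum of finitely many elements of~$\FF$, so we can assemble it from $\VF^\circ$ using sums and reductions, and recover the original walk by re-adding the spikes.

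The main obstacle I anticipate is not a single deep argument but rather the bookkeeping translation between the language of \cite{planarCycles2} (nested generating families of cycles in the planar cycle space) and the walk-theoretic formalism of Section~2 (closed walks, spikes, rotations, and the operational closure defining $\WF(G)$). In particular, one must check that the geometric notion of nestedness from the cited paper is equivalent, for shortcut-free indecomposable cycles, to the combinatorial notion defined here via the powers $W^n$; this should amount to a short lemma that follows directly from the definitions once the dictionary between the two formalisms is set up.
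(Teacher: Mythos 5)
The paper's own proof of this statement is literally the single sentence that it ``is an immediate consequence of~\cite[Theorem 6.2]{planarCycles2}''; you cite the same theorem, so the approach matches. However, the derivation you sketch to bridge the gap is not correct, and the issue is not mere bookkeeping.

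The central problem is your step from $\mathbb F_2$-cycle-space generation to generation of $\WF(G)$. You write that a cyclically reduced closed walk ``lies in the cycle space; this in turn is a sum of finitely many elements of~$\FF$, so we can assemble it from $\VF^\circ$ using sums and reductions.'' This inference does not hold. Generation in the $\mathbb F_2$-cycle space only records which edges are traversed an odd number of times; it is a statement about a vector-space quotient of $H_1(G;\Z)$, which is itself the abelianisation of $\pi_1(G)$. The walk operations of Section~2 (concatenation, rotation, spike addition/removal) generate something much finer --- effectively the normal closure in the free group $\pi_1(G)$. For instance, a commutator walk $ABA^{-1}B^{-1}$ is $0$ in the cycle space, hence a ``sum of zero elements of $\FF$'', but this tells you nothing about whether it lies in $\overline{\VF^\circ}$; and a walk traversing a single cycle twice is likewise trivial in $\mathbb F_2$-homology yet need not be generated unless that cycle is itself available. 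So if \cite[Theorem~6.2]{planarCycles2} really only asserted cycle-space generation (as your summary suggests), the theorem would not follow, let alone immediately. What it must actually supply is a $\Gamma$-invariant nested set of cycles that generates $\pi_1(G)$ as a group (and hence $\WF(G)$ under walk operations); that stronger statement is what makes the present theorem an immediate corollary.

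A secondary slip: you argue that because the cycles in $\VF^\circ$ are shortcut-free they ``share at most a common subpath and hence cannot cross in the combinatorial sense defined here.'' But the combinatorial crossing in Section~2 is \emph{defined} on shared subpaths: two cycles sharing a path $Q$ cross precisely when they leave the two ends of $Q$ on opposite sides. Shortcut-freeness does not preclude this. Nestedness is again a property that must come directly from the cited theorem, not from indecomposability.
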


\section{\Plpr s} \label{secpltygen}

In this section we introduce our notion of planar presentation, which is the central definition of this paper. For the convenience of the reader, we start by recalling the definition of a special \plpr\ from \cite{planarpresI}. We then define the more involved \gcplpr s in \Sr{secPlprGen}.

\subsection{Special \plpr s} \label{secPlpr}

The intuition behind special \plpr s comes from the notion of a \cnsem\ given above: a \plpr\ is a group presentation endowed with some additional data (forming what we will call an \empr) which describe the local structure of a \cnsem\ of the corresponding \Cg, that is, the spin and the information of which generators preserve or reflect it. 

\medskip
Given a group presentation $\PF=\left<\SF \mid \RF\right>$, where $\SF$ is finite, or countably infinite,
we will distinguish between two types of generators $s$: those for which we have $s^2$ as a relator in $\RF$ and the rest. 
The reasons for this distinction will become clear later.
Generators $t$ for which the relation $t^2$ is provable but not explicitly part of the presentation might exist, but do not cause us any concerns. Given a group presentation $\PF=\left<\SF \mid \RF\right>$, we thus let $\IF=\IF(\PF)$ denote the set of elements $s\in \SF$ such that $\RF$ contains the relator $s^2$ or $s^{-2}$. 

Let $\SF'= \SF \cup (\SF \sm \IF)^{-1}$. For example, if $\PF=\left<a,b,c \mid a^2, b^2\right>$, then $\SF'= \{a,b,c,c^{-1}\}$.

A  {\em spin} on $\PF=\left<\SF \mid \RF\right>$ is a cyclic ordering of $\SF'$ (to be thought of as the cycling ordering of the edges that we expect to see around each vertex of our \Cg\ once we have proved that it is planar) 

An {\em \empr} is a triple $\PF, \sigma, \tau$ where  $\PF=\left<\SF \mid \RF\right>$ is a group presentation, $\sigma$ is a spin on $\PF$, and $\tau$ is a function from $\SF$ to $\{0,1\}$ (encoding the information of whether each generator is \spre\ or \srev).

To every \empr\ $\PF, \sigma, \tau$ we can associate a tree $\ttt$ with an \vapf\ embedding in $\R^2$.
As a graph, we let $\ttt$ be $Cay\left<\SF \mid s^2, s \in \IF\right>$. Easily, we can embed $\ttt$  in $\R^2$ in such a way that for every vertex $v$ of $\ttt$, one of the two cyclic orderings of the colours of the edges of $v$ inherited by the embedding coincides with $\sigma$ and moreover, for every two adjacent vertices $v,w$ of $\ttt$, the clockwise cyclic ordering of the colours  of the edges of $v$ coincides with that of $w$ if and only if $\tau(a)=0$ where $a$ is the colour of the $v$--$w$ edge. (If $\tau(a)=1$, then the clockwise ordering of $v$ coincides with the anti-clockwise ordering of $w$.)

Given a word $W$, we let $W^\infty$ be the 2-way infinite word obtained by concatenating infinitely many copies of $W$.
We say that two words $W,Z\in \RF$ {\em cross}, if there is a 2-way infinite path $R$ of $\ttt$ induced by  $W^\infty$ and a 2-way infinite path $L$ induced by $Z^\infty$ such that $L$ meets both components of $\R^2 \sm R$.
Note that, if two non-trivial words form closed walks in the \Cg, then the words cross if and only if the closed walks cross.

For example, consider the presentation $\PF=\left<n,e,s,w \mid n^2, e^2, s^2, w^2\right>$, the spin $n,e,s,w,n$ (read `north, east, south, west'), and $\tau$ identically 0. Then any word containing $ns$ as a subword crosses any word containing $ew$. The word $nesw$ however crosses no other word, and indeed adding that word to the above presentation yields a planar \Cg: the square grid.

\begin{defi} \label{defsplpr}
A  {\em special \plpr} is an \empr\ $(\PF, \sigma, \tau)$ such that 
\begin{enumerate}[(sP1)]
\item \label{plpri} no two relators $W,Z\in \RF$ cross, and 
\item \label{plprii} for every relator $R$, the number of occurrences of letters $s$ in $R$ with $\tau(s)=1$ (i.e.\ \srev\ letters) is even; here, the symbol $s^n$ counts as $|n|$ occurrences of $s$.
\end{enumerate}
\end{defi}

Requirement (sP\ref{plprii}) is necessary, as the spin of the starting vertex of a cycle must coincide with that of the last vertex. 

\medskip
In \cite{planarpresI} we proved the following results about \splpr s.

\begin{thm}[{\cite[Theorem~3.3]{planarpresI}}] \label{thm_3con}
Every planar, locally finite, $3$-connected \Cg\ admits a \splpr.
\end{thm}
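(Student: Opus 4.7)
The plan is to extract the data $(\PF, \sigma, \tau)$ of the embedded presentation directly from a planar embedding of $G$, using \Tr{thm_fundgr} to choose the relators cleverly so that they come out non-crossing. First I would invoke \Lr{lprem} to obtain a \cnsem\ $\rho$ of $G$, and read off from $\rho$ the common cyclic order of colours around each vertex (up to reflection) as the spin $\sigma$, as well as the function $\tau\colon\SF\to\{0,1\}$ recording which generators are \spre\ or \srev. By consistency, $\tau$ is well-defined.

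The core step is the choice of $\RF$. I would apply \Tr{thm_fundgr} with $\Gam=\Gam(G)$, which produces a $\Gam$-invariant, nested set $\VF^\circ$ of indecomposable closed walks generating $\WF(G)$. Since $\Gam$ is vertex-transitive, each $\Gam$-orbit in $\VF^\circ$ corresponds to a single word in the free group $F_\SF$ up to rotation: pick a representative walk per orbit, translate it by a suitable group element so that it starts at $1$, and read off its edge labels. Let $\RF$ be the collection of words so obtained.

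Next I would verify that $\left<\SF\mid\RF\right>$ is a presentation of $\Gam$: every closed walk in $G$ is, by \Tr{thm_fundgr}, generated from $\VF^\circ$ by sums, rotations, reductions and spikes; since each element of $\VF^\circ$ is a $\Gam$-translate of some $R\in\RF$, and translating a closed walk in $G$ corresponds in $F_\SF$ to conjugation (modulo spikes, which are trivial in $F_\SF$), the normal closure of~$\RF$ coincides with the kernel of $F_\SF\to\Gam$. Condition (sP\ref{plprii}) is automatic: around a closed walk the spin must return to its initial value, and each \srev\ letter flips the spin, so their total number must be even.

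The remaining, and perhaps most delicate, step is to verify (sP\ref{plpri}), i.\,e.\ that no two elements of $\RF$ cross in the sense of \Dr{defsplpr}. For this I would use the remark made in \Sr{secpltygen} that two non-trivial words form crossing closed walks in the \Cg\ \iff\ they cross in the tree-theoretic sense of \Dr{defsplpr}; the nestedness of $\VF^\circ$ in the $\Gam$-invariant embedding $\rho$ then transfers directly to non-crossing of the words in $\RF$. The main obstacle in carrying this out rigorously is ensuring that this equivalence holds cleanly: crossings in $\ttt$ are a purely local combinatorial condition on the spins at the vertices where the two walks meet, and since the consistent embedding of $G$ induces the same local spin data as the embedding of $\ttt$ determined by $(\sigma,\tau)$, a crossing of one pair of walks arises iff a crossing of the other does. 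Once this local-global translation is nailed down, the proof concludes.
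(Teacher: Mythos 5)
Your proposal is correct and follows essentially the same strategy as the paper's own argument (given here in the proof of the more general Theorem~\ref{thmcons}, which specialises to the 3-connected case once one observes that the whole graph is a single block): obtain a consistent embedding via Lemma~\ref{lprem}, read off $\sigma$ and $\tau$ from it, apply Theorem~\ref{thm_fundgr} to get a $\Gamma(G)$-invariant nested set of indecomposable closed walks, read off representative words as relators, and verify (sP1) from nestedness and (sP2) from the parity of spin-reversals around a closed walk. The only minor omission is the final Tietze-transformation step the paper uses (relying on Droms' result that planar groups are finitely presented) to trim $\RF$ to a finite subset; since Definition~\ref{defsplpr} does not require $\RF$ to be finite this does not affect correctness, though the finite version is what is later needed for effective enumeration.
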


\begin{thm}[{\cite[Theorem~4.2]{planarpresI}}] \label{thmplanar}
If $(\PF, \sigma, \tau)$ is a special \plpr, 
then its \Cg\ $Cay(\PF)$ is planar. Moreover, $Cay(\PF)$ admits a consistent embedding, with spin $\sigma$ and spin-behaviour of generators given by $\tau$.
\end{thm}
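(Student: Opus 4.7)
The plan is to build a planar embedding of $Cay(\PF)$ from the consistent embedding of the tree $\ttt$ in $\R^2$ that is part of the data of the embedded presentation, by implementing the vertex identifications forced by the additional relators in $\RF$. Since $\RF \supseteq \{s^2 : s \in \IF\}$, there is a canonical label-preserving surjective graph homomorphism $q: \ttt \to Cay(\PF)$ whose fibres are the equivalence classes of the relation $\sim$ on $V(\ttt)$ generated by the pairs $v \sim vR$ for $v \in V(\ttt)$ and $R \in \RF$. It therefore suffices to realise $q$ as a planar quotient of the embedded tree.

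For each such pair $(v,R)$ the plan is to choose a Jordan arc $\alpha_{v,R}$ lying in the complement of the open edges of $\ttt$ and joining the embedded images of $v$ and $vR$. Granting that the $\alpha_{v,R}$ can be chosen pairwise non-crossing (and meeting $\ttt$ only at their endpoints), the graph $\ttt^+ := \ttt \cup \{\alpha_{v,R}\}$ is planar; contracting each arc $\alpha_{v,R}$ to a point, and identifying the parallel edges of the same $\SF$-colour that arise from the identifications, yields precisely $Cay(\PF)$. Since edge contraction preserves planarity, this would prove $Cay(\PF)$ planar. A separate check, using that the arcs lie off the edges of $\ttt$ and that $\ttt$ already realises the spin $\sigma$ and behaviour $\tau$ at each of its vertices, then shows that the cyclic ordering of colours at each vertex of $\ttt$ is inherited by its image in $Cay(\PF)$, yielding the required consistent embedding.

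The main obstacle is choosing the arcs $\alpha_{v,R}$ to be globally pairwise non-crossing; this is precisely where condition (sP\ref{plpri}) is designed to enter. Each $\alpha_{v,R}$ can be drawn inside an arbitrarily small neighbourhood of the walk $W_{R,v}$ in $\ttt$, so a forced crossing of two arcs $\alpha_{v,R}$ and $\alpha_{v',R'}$ would translate, via the embedding of $\ttt$, into a crossing of the walks $W_{R,v}$ and $W_{R',v'}$, hence into a crossing of $R$ and $R'$ in the sense used in \Dr{defsplpr}, contradicting (sP\ref{plpri}). To make this rigorous I would enumerate the pairs $(v,R)$ and add the arcs one at a time, at each step using (sP\ref{plpri}) to route the new arc through a suitable face of the current partial drawing. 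Condition (sP\ref{plprii}) enters separately: the parity of spin-reversing letters in $R$ ensures that the spin predicted at $vR$ coincides with that at $v$, so that $\alpha_{v,R}$ can be consistently drawn on a single definite side of the walk $W_{R,v}$, and the embedding of the quotient vertex $q(v)$ indeed realises the spin~$\sigma$.
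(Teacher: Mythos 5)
Your starting point --- embed $\ttt$ so that it realises $\sigma$ and $\tau$, then obtain $Cay(\PF)$ as a planar quotient of the embedded tree, with condition~(sP\ref{plpri}) guaranteeing that the identifications are compatible with the embedding --- is the same as the paper's (this is the proof recalled and adapted in Section~\ref{placlu}). But your arc-drawing implementation skips the technical heart of the argument, and there is a smaller preliminary error. On the smaller point: the equivalence relation on $V(\ttt)$ generated by the pairs $v\sim vR$ alone has as its classes the right cosets of the \emph{subgroup} generated by the images of the relators, not of the normal closure $N(\RF)$; its quotient is a Schreier coset graph, not $Cay(\PF)$. One must instead take the smallest \emph{label-preserving congruence} (equivalently, identify $v$ with $vgRg^{-1}$ for all $g$), i.e.\ the $N(\RF)$-orbit relation $\pi$ used in the paper. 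This is fixable, but it vastly enlarges the collection of arcs you must route.

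The real gap is the assertion that (sP\ref{plpri}) lets you route the arcs $\alpha_{v,R}$ pairwise non-crossing, one at a time. Condition~(sP\ref{plpri}) concerns double rays of $\ttt$ induced by $W^\infty$; passing from non-crossing of those double rays to a simultaneous non-crossing family of arcs near the finite walks $W_{R,v}$ is precisely what has to be proved, and it is not a routine translation: you must handle (a)~translates of a relator along itself (arcs such as $\alpha_{v,R}$ and $\alpha_{vs,R}$ with $s$ a letter of $R$ run alongside each other over a long overlap), (b)~the accumulation of infinitely many arcs in a thin neighbourhood of each edge of $\ttt$, and (c)~the danger that a greedy ``one arc at a time'' routing leaves no room for later arcs --- a local induction does not obviously close. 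The paper's proof replaces all of this by a global argument: choose a connected fundamental domain $D$ of $Cay(\PF)$ in $\ttt$ that is a union of stars, observe that $Cay(\PF)$ is obtained from $\cls{D}$ by identifying pairs of $N(\RF)$-equivalent midpoints of edges, and prove that these pairs are \emph{nested}. That nestedness is established by a parity argument --- an equivalence relation on the faces of $\ttt$ defined via the parity of crossings with the lifted relator family $\ttt[W_o]$, whose $N(\RF)$-invariance (the analogue of Lemma~\ref{Ninv2}) is the crux --- from which one deduces that any two cycles of $Cay(\PF)$ cross an even number of times and hence the identified pairs do not interleave. Some version of this invariance/parity machinery is unavoidable, and your proposal neither supplies it nor points to a substitute.
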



\subsection{General \plpr s} \label{secPlprGen}

We now extend the above definition of a \plpr, to a more general one, the advantage of which is that it can capture \Cg s with 2-separators that do not admit consistent embeddings, which will allow us to extend \Tr{thm_3con} and \Tr{thmplanar} to all planar \Cg s.

\medskip
Let again $\PF=\left<\SF \mid \RF\right>$ be a group presentation, and define $\SF'$ as above.

A {\em spin structure} $\CF$ on $\PF$ consists of a cover $B_1, \ldots, B_k$ of $\SF'$ (i.e.\ $\bigcup_i B_i = \SF'$) with the following properties
\begin{enumerate}[(S1)]
\item \label{ssiii} \fe\ generator $b$, the number of $B_i$'s containing $b$ equals  the number of $B_i$'s containing $b^{-1}$, and
\item \label{ssiv} the auxiliary graph $X$ on $\CF \cup \SF'$ with $s\sim B_i$ whenever $s\in B_i$, is a tree.
\end{enumerate}
(It will become clear later that a special \plpr\ is a special case of a general one when $\CF$ consists of a single set coinciding with $\SF'$.)

The {\em hinges} of this spin structure are the elements of $\SF'$ that have degree at least 2 in $X$; in other words, $h\in \SF'$ is a hinge if $h\in B_i\cap B_j$ for some $i\neq j$. Hinges of a spin structure correspond to edges of our \Cg\ \g whose two endvertices separate \G. 

For example, $a,b$ are the hinges of the presentation
\[
\left<a,b,c,d,f,g \mid a^2,c^2,d^2,f^2,g^2,(af)^2,(ag)^2,abab\inv gbfb\inv, cbdb\inv\right>
\]
given in \fig{BlockEx}, and $b$ is the only hinge in \fig{DSS}.  
The tree $X$ of condition~(S\ref{ssiv}) corresponding to the presentation of \fig{BlockEx} is shown in \fig{BaumEx2}. \fig{BaumEx} shows the corresponding tree $X$ that would result if we amalgamated the above group with two more groups each of which being isomorphic to the subgroup generated by $b,c,d$ along the subgroup spanned by~$b$. 

\showFig{BaumEx2}{An example: the tree $X$ of condition (S\ref{ssiv}) corresponding to  \fig{BlockEx}.}

\showFig{BaumEx}{The tree $X$ of condition (S\ref{ssiv}) corresponding to a variant of \fig{BlockEx}.}

Condition (S\ref{ssiv}) has the following important consequences:
\labtequ{ssi}{$B_i\cap B_j$ is either empty or a singleton \fe\ $i\neq j$,}
because if $h,g\in B_i\cap B_j$ then $h,g,B_i,B_j$ span a 4-cycle in $X$, which cannot happen when $X$ is a tree, and
\labtequ{ssii}{every $B_i$ contains at least one hinge unless $k=1$, i.e.\ $\CF$ is the singleton~${\{ \SF' \}}$,}
because if each neighbour of $B_i$ in $X$ has degree 1, then $B_i$ and its neighbours form a component of $X$.

\medskip
A \emph{generic \empr} is a quintuple $\PF, \CF, \sigma, \tau, \mu$ as follows; $\PF$~is a group presentation and $\CF$ a spin structure on $\PF$ as above; $\sigma$ is a function of $i\in \{1, \ldots, k\}$ assigning a spin (i.e.\ a cyclic ordering) to each $B_i\in \CF$;\\ $\tau \colon \SF \times \{1, \ldots, k\} \to \{0,1\}$ encodes the information of whether each generator is spin-preserving or spin-reversing in each $B_i$ it participates in (if $s\in \SF\sm B_i$, then the value of $\tau(s,i)$ will be irrelevant in the sequel); 
and \fe\ $b\in \SF$, and every $i$ for which $b\in B_i$, $\mu(b,i)$ is a $B_j$ such that $b^{-1}\in B_j$, and $\mu(b,i)\neq \mu(b,m)$ for $m\neq i$.
This $\mu$ encodes the information of which pairs of $B_i$ incident with the two endvertices of a given hinge belong to the same block of~\G. 
The use of $\SF$ rather than $\SF'$ in the definition of $\mu$ and $\tau$ is intended: the values we assign to each $b\in S$ give us enough information about how to treat~$b^{-1}$. 

For the time being, the data $\sigma, \tau, \mu$ are abstract objects describing the intended structure and embedding of our \Cg\ given by  $\PF$. But we will indeed prove that if these data satisfy certain conditions, then the \Cg\ is indeed planar and can be embedded in the intended way.

As an example, the presentation $\left<\SF \mid b^2, a^3, c^3, ab a^{-1}b, cbcb \right>$ of the graph  of \fig{DSS} can be endowed with the following data. The spin structure $\CF$ consists of two sets $B_1=\{b, c, c^{-1}\}, B_2= \{b, a, a^{-1}\}$. We can then let  $\sig(1)= (b, c, c^{-1})$, $\sig(2)=(b,a^{-1},a)$ ---but any other $\sig$ would do in this case as there are only two cyclic orderings of a set of three elements, and they are the reflection of each other--- $\tau(b,1)=0$, $\tau(b,2)=1$ ---this is the most interesting aspect of this graph: any $b$ edge is \spre\ in one of its incident blocks and \srev\ in the other--- and $\mu(b,1)=B_1$, $\mu(b,2)=B_2$ ---because $b$ stabilises the two components into which it splits the graph.

\medskip
Our general definition of a \plpr\ will be very similar to that of \Sr{secPlpr}, and still based on the idea of non-crossing relators. One difference is that we have to embed the tree $\ttt=Cay\left<\SF \mid s^2, s \in \IF\right>$ in~$\R^2$ more carefully: rather than demanding every vertex to have the same cyclic ordering of its incident colours in the embedding, which would in general make it impossible to adhere to the spin-behaviour encoded by $\tau$, we embed $\ttt$ (\vapf) in $\R^2$ in such a way that the following two conditions are satisfied.
Given a vertex $x\in V(\ttt)$ and $B_i\in \CF$, we write \emph{$B_i(x)$} for the edges of $x$ with labels in~$B_i$.
\begin{enumerate}[(B1)]
\item $\sig$ is respected, i.e.\ \fe\ vertex $x\in V(\ttt)$, and every $B_i\in \CF$, 
the cyclic ordering induced on $B_i(x)$ by our embedding coincides with $\sig(i)$ \utr. Moreover, the edges of $B_i(x)$ are consecutive in our embedding.
\item  $\tau$ is respected, i.e.\ \fe\ edge $e=vw$ of \ttt, and every $i$ \st\ the label $s$ of $e$ is in $B_i\in \CF$, we have $1_{\sig(i)} (B_i(v)) = 1_{\sig(j)} (B_j(w) )$ if and only if $ \tau(s,i)=0$,
where $B_j=\mu(s,i)$ and $1_{\sig(i)} (B_i(v))$ is 1 if the clockwise cyclic ordering of the colours of the edges of $B_i(v)$ coincides with $\sig(i)$ and 0 otherwise. 
\end{enumerate}

We repeat the definition of {\em crossing} from \Sr{secPlpr} verbatim:
given a word $W$, we let $W^\infty$ be the 2-way infinite word obtained by concatenating infinitely many copies of $W$.
We say that two words $W,Z\in \RF$ {\em cross}, if there is a 2-way infinite path $R$ of $\ttt$ induced by  $W^\infty$ and a 2-way infinite path $L$ induced by~$Z^\infty$ such that $L$ meets both components of $\R^2 \sm R$.


The second and final difference of our generalised definition of a \plpr\ compared to that of \Sr{secPlpr} will be an additional condition reflecting the idea that in a planar \Cg\ of connectivity 2, we can choose the relators in such a way that each cycle they induce is contained in a block. Recalling that our spin structure $\CF$ is intended to capture the decomposition into blocks, the following definition should not be too surprising. 

We say that a relator $R$ is {\em blocked} with respect to $\CF$, if it satisfies the following two properties. Firstly, for every two (possibly equal) consecutive letters $st$ appearing in $R^\infty$ or $(R^{-1})^\infty$, there is some $B_i\in \CF$ containing both $s^{-1}, t$.
Secondly,  \fe\ three consecutive letters $sbt$, where $b$ is a hinge, appearing in $R^\infty$ or $(R^{-1})^\infty$, if $B_i$ is the unique element of $\CF$ containing $s^{-1}, b$, then $\mu(b,i)$ contains both $b^{-1},t$, unless $s=b=t$ and $b^2\in \RF$; here, the existence of such a $B_i$ is guaranteed by the previous requirement, and its uniqueness is a consequence of~\eqref{ssi} in the definition of a spin structure.

\begin{defi} \label{defgplpr}
A  {\em  \gcplpr} is a generic \empr\ such that 
\begin{enumerate}[(P1)]
\item \label{gplprz} every relator in $\RF$ is blocked with respect to $\CF$;
\item \label{gplpri} no two relators $W,Z\in \RF$ cross; 
\item \label{gplprii} for every relator $R$, the number of occurrences of letters $t$ in $R$ with $\tau(t,i)=1$ (i.e.\ \srev\ letters), where $i$ is the unique value for which $s^{-1},t\in B_i$ for the letter $s$ preceding $t$ in $R$, is even\footnote{The existence and uniqueness of this $B_i$ is a consequence of~(P\ref{gplprz}); see the definition of `blocked'.}; here, the symbol $s^n$ counts as $|n|$ occurrences of $s$;
\item \label{gplpriii} no relator is a sub-word of a rotation of another relator.
\end{enumerate}
\end{defi}


Note that a \plpr\ as defined in \Sr{secPlpr} is a special case of a generic one when $\CF$ consists of a single set coinciding with $\SF'$.

\medskip
In \Sr{secConc} we will slightly generalise the notion of a \gcplpr\ further, by allowing the removal of certain redundancies, to obtain the notion of \glplpr\ discussed in the introduction.

\section{Proof of planarity of the \Cg\ of a \gcplpr} \label{secpltygenproof}

In this section we prove that the \Cg\ defined by any \gcplpr\ is planar (\Tr{Gplanar}). 

\bigskip
For a hinge $h\in \SF$, we let $\CF(h):= \{B_i \in \CF \mid h\in B_i \}$ and let $N(h)$ be the cardinality $|\CF(h)|$. Note that $|\CF(h)|= deg_X(h)$, where the tree $X$ is as in~(S\ref{ssiv}) of the definition of a spin structure.

Every hinge $b=xy\in E(\ttt)$ of \ttt\ labelled $h$ naturally splits \ttt\ into $N(h)$ subtrees: each of these subtrees contains $b$, it contains all edges of $x$ with labels in a component of $X - h$ containing some $B_i\in \CF(h)$ and no other edges of $x$, and it contains those edges of $y$ with labels in  the component of $X - h^{-1}$ containing $\mu(h, i)$ and no other edges of $y$; moreover, each such subtree is maximal with these properties. Let $Sep_b=\{T_1, T_2, \ldots, T_{N(h)}\}$ denote the set of those subtrees, and note that 
$\bigcap Sep_b = \{b\}$.

\begin{defi} \label{defprbl}
A \emph{\prbl} of \ttt\ is a maximal subtree $A \subseteq \ttt$ not separated by any $Sep_b$; that is, for every hinge $b$ of $\ttt$, $A$ is contained in some element of $Sep_b$.
\end{defi}

Alternatively, we can define a \prbl\ as a maximal subtree of \ttt\ such that for every $x,y\in V(A)$, if we let $s_1 s_2  \ldots  s_k$ denote the word (with letters in $\SF$) read along the $x$--$y$~path, then $s_{j-1}^{-1}, s_{j}$ lie in a common element of $\CF$ \fe\ $j>1$, and whenever $s_j$ is a hinge, and $s_{j-1}^{-1}, s_{j} \in B_i \in \CF$, then $s_{j}^{-1}, s_{j+1}\in \mu(s_j, i)$.

\comment{
Let $h\in \SF\sm \IF$ be a hinge. 
  is called \emph{\sese}, if 
Note that 
 $\mu$ induces a matching on $\{B_i \in \CF \mid h\in B_i \text{ or } h^{-1} \in B_i \}$, where we allow that matching to contain a loop...

We will group certain \prbl s of \ttt\ into larger structures called \emph{\prcl s}. These are defined in such a way that they can be embedded into $\R^2$ in a consistent way.

For every \nss\ hinge $h\in \SF$, we define a relation $\sim_h$ on the set of \prbl s of \ttt\ as follows. Given two \prbl s $A,B$ that intersect at a vertex $x$ (we allow $A,B$ to have a further vertex in their intersection), we declare $A \sim_h B$ if  $A$ contains $B_i(x)$ for some $i$ and $B$ contains $B_j(x)$ where $B_j=\mu(h,i)$. 
\begin{defi} \label{defprcl}
Let $\simeq_h$ denote the symmetric transitive closure of $\sim_h$. A \emph{\prcl} is the union of an equivalence class of $\simeq_h$.
\end{defi}
Note that any \prcl\ containing two incident hinges labelled $h,h^{-1}$ contains the whole $h$-labelled double-ray of \ttt\ containing those hinges, because the \prbl\ $B$ in the definition of $\sim_h$ contains $B_i(xh^{-1})$ by the definition of a \prbl, and so $B \sim_h C$ for the \prbl\ $C$ containing $B_j(xh^{-1})$. 

The following are immediate consequences of the definition of a \prcl:
\begin{enumerate}
 \item \label{cli} Every \prcl\ is a subtree of \ttt;
 \item \label{clii} no two distinct \prcl s share a \prbl;
 \item \label{cliii} \ttt\ is the union of its \prcl s; and
 \item \label{cliv} the label-preserving automorphisms of \ttt\ map \prcl s to \prcl s.
\end{enumerate}
}

\subsection{The embedding $\rho$ of \ttt} \label{secrho}

Recall that our proof of \Tr{thmplanar} starts with an embedding of the corresponding tree \ttt\ respecting the spin data. In our new setup of a \gempr\ our spin data give us some restrictions but do not uniquely determine an embedding of \ttt, and in fact we have to be careful with our choices in order for the proof in subsection~\ref{placlu} to work. 

Recall that our \gempr\ consists of the data $\PF, \CF, \sigma$, $\tau, \mu$. For $B\in \CF$ and a vertex $x\in V(\ttt)$, recall that $B_i(x)$ denotes the edges going out of $x$ whose labels are in $B$.
We claim that there is an embedding $\rho: \ttt \to \R^2$  satisfying all of the following (the first two were also used in the definition of crossing relators in \Sr{secPlprGen}).
\begin{enumerate}[($\rho$1)]
\item \label{rhosig} $\sig$ is respected, i.e.\ \fe\ vertex $x\in V(\ttt)$, and every $B_i\in \CF$, 
the cyclic ordering induced on $B_i(x)$ by $\rho$ coincides with $\sig(i)$ \utr. Moreover, the edges of $B_i(x)$ are consecutive in the spin of $x$ induced by~$\sig$.
\item \label{rhotau} $\tau$ is respected, i.e.\ \fe\ edge $e=vw$ of \ttt, and every $i$ \st\ the label $s$ of $e$ is in $B_i\in \CF$, we have $1_{\sig(i)} (B_i(v)) = 1_{\sig(j)} (B_j(w) )$ if and only if $ \tau(s,i)=0$,
where $B_j=\mu(s,i)$ and $1_{\sig(i)} (B_i(v))$ is 1 if the clockwise cyclic ordering of the colours of the edges of $B_i(v)$ coincides with $\sig(i)$ and 0 otherwise. 
\item \label{rhohin} $\mu$ is respected: let $b\in E(\ttt)$ be a hinge, and $U,W$ two paths containing $b$ contained in distinct \prbl s containing $b$. Then $U,W$ do not cross each other (at~$b$).
\item \label{rholocal} \label{rholast} If $x, y$ belong to the same $N(\RF)$-orbit (where $N(\RF)$ is the normal subgroup generated by $\RF$ as in \Sr{defpres}), and $b$ is a hinge at $x$ with label in $h\in\IF$, and $h\neq 1$, then the \emph{local spin} at $x$ \wrt\ $b$ coincides \utr\ with the local spin at $y$ \wrt\ the corresponding hinge labelled $h$.
\end{enumerate}
Here,  the \emph{local spin} with respect to a generator $h\in \SF'$ at a vertex  $x$ is the cyclic ordering on $N_{X}(h)$ 
induced by the embedding, where $X$ denotes the tree from \Sr{secPlprGen}.

If \g is a planar \Cg, then the results of \Sr{sec_Con2} imply that if we embed the universal cover \ttt\ of \g into $\R^2$ in a way that locally imitates an embedding of \G, then all above properties are satisfied.

An \emph{open star} is a subspace of a graph consisting of a single vertex and all open half-edges incident with it. A \emph{star} is the union of an open star with some of the midpoints in its closure.

Properties ($\rho$\ref{rhosig}) to ($\rho$\ref{rhohin}) are not hard to satisfy: we can embed $\ttt$ by starting with the star $E(o)$ and then recursively attaching the star $E(v)$ of a new vertex to the subtree embedded so far, and it is always possible to embed $E(v)$ without violating any of ($\rho$\ref{rhosig})--($\rho$\ref{rhohin}). In fact we could have several ways to extend the current embedding to $E(v)$, arising by `permuting' those $B_i(v), 1\leq i \leq k$ that do not contain the edge of $v$ embedded before, and  by `reflecting' any such $B_i(v)$. These choices are in direct analogy to the flexibility we have in the embedding of any planar \Cg\ of connectivity 2: permuting the  $B_i(v)$ corresponds to `activating' a hinge $b$ incident with $v$ to exchange the order in which blocks separated by $b$ are embedded. Reflecting a $B_i(v)$ corresponds to flipping such a block around.

These choices mean that ($\rho$\ref{rholocal}) will be violated unless we make them carefully. To achieve this, 
recall from~(S\ref{ssiv}) of \Sr{secPlprGen} that the auxiliary graph $X$ on $\CF \cup \SF'$ with $s\sim B_i$ whenever $s\in B_i$, is a tree. Let $\xl$ denote the tree obtained from~$X$ by attaching to each vertex $v$ in $\SF' \subset V(X)$ a new leaf, which leaf we denote by~$\ell(v)$. 

Fix an embedding $\chi: \xl \to \R^2$ of that tree with the following two properties. 
Firstly, the spin of any vertex $B\in \CF$ of \xl\ coincides with $\sig(B)$ \utr.

Recall that $N(v)=N_G(v)$ denotes the neighbourhood of $v$ in a graph $G$. For every hinge $h\in \SF \sm \IF$, note that $\mu(h,\cdot)$ defines a bijection between $N_X(h)$ and   $N_X(h^{-1})$ by the definition of $\mu$. We extend that bijection to $N_{\xl}(h)$ and $N_{\xl}(h^{-1})$ by mapping $\ell(h)$ to $\ell(h^{-1})$. The second property we impose on $\chi$ is that the spin it induces on $N_{\xl}(h)$ coincides \utr\ with the $\mu$-image of that spin induced by $\chi$ on $N_{\xl}(h^{-1})$, and this holds \fe\ such $h$.  

For an involution hinge $h\in \IF$, $\mu(h,\cdot)$ still defines a bijection between $N_X(h)$ and   $N_X(h^{-1})= N_X(h)$, and we do not impose any  requirement on $\chi$ as we did for $h\in \SF \sm \IF$. Instead, we let $\chi$ embed  $N_{\xl}(h)$ with an arbitrary spin $\phi= \phi(h)$, and define 
\begin{defi} \label{defphi}
The \emph{dual spin} of $\phi$ is the cyclic ordering on $N_{\xl}(h)$ obtained by composing $\phi$ with $\mu(h,\cdot)$.
\end{defi}
To satisfy ($\rho$\ref{rholocal}), we will construct $\rho$ in such a way that the local spin \wrt\ $h$ at every vertex in a given $N(\RF)$-orbit either always coincides with $\phi$ or it always coincides with the dual of $\phi$. We remark that we cannot construct $\rho$ algorithmically since we cannot predict which vertices of \ttt\ are in the same $N(\RF)$-orbit; we can only prove the existence of such a $\rho$ abstractly. 

 
\medskip
We think of this $\chi$ as providing instructions about how to construct $\rho$. As an example, if the set $\IF$ of involutions in $\SF$ is empty, then every vertex of \ttt\ will have the same spin \utr\ in $\rho$, and that spin can be read from $\chi$ by contracting all non-leaves of \xl\ into a single vertex; that vertex has the right spin in the resulting star.

Let $o=x_1, x_2, \ldots $ be an enumeration of $V(\ttt)$ \st\ $\{x_1, \ldots, x_k\}$ spans a connected subgraph for all $k$. We will construct $\rho$ by embedding the $x_i$ one at a time as indicated above. To begin with, we embed one edge $e_0$ incident with $x_1=o$ in the $0$th step. From now on, each step $i$ begins with some vertices being embedded fully, i.e.\ with all incident edges, and some vertices having exactly one of their edges  embedded in the current embedding $\rho_{i-1}$ of some subtree of \ttt. Let $j$ be the smallest index \st\ $x_j$ has exactly one of its edges $e_i$ embedded in $\rho_{i-1}$. We may assume without loss of generality that $j=i$ by changing our enumeration. 

We extend $\rho_{i-1}$ to $\rho_{i}$ by embedding the remaining edges incident with $x_i$. This will be done by the performing the following recursive procedure on \xl\ to obtain an embedded star $S_i$ with its edges labelled by $\SF'$, and then embedding $N_\ttt(x_i)$ with the same spin as $S_i$.

To begin with, let $\ell$ be the unique leaf of \xl\ such that $\ell= \ell(s)$ for the label $s\in \SF$ of the edge $e_i$ considered as outgoing from $x_i$. We distinguish the following cases.

{\bf Case 1}: If $s\not\in \IF$, and $s$ is a hinge, then we embed the star $N(s)$ of $s$ in \xl\ into $\R^2$ so that the spin of $s$ in this embedding coincides with the spin of $s$ in $\chi$ \utr; there are exactly two possibilities for this ---because of reflection--- and we choose the unique one guaranteeing ($\rho$\ref{rhohin}): unless we are in step $i=1$, in which case we just embed $N(s)$ with the spin of $s$ in $\chi$ without reflection, the other endvertex $x$ of $e_i$ has already been fully embedded, and the local spin \wrt\ $e_i$ (which now label $s^{-1}$ as seen from $x$) at $x$ coincides \utr\ with that induced on $N(s^{-1})$ by $\chi$ by induction hypothesis. We use the possibility to reflect or not in order to guarantee that the clockwise ordering of the $B_i$ in $N(s)$ coincides with the counterclockwise ordering of the $\mu(s,i)$ induced by the spin of $x$ in the embedding $\rho_{i-1}$.

{\bf Case 2}: If $s\not\in \IF$, and $s$ is not a hinge, then it has exactly two neighbours in $N(s)$ ($\ell(s)$ and the unique $B\in \CF$ containing $s$), and so reflection does not change the spin; we just embed $N(s)$ in the unique possible way.

{\bf Case 3}: If $s\in \IF$, and $s$ is not a hinge, then again we just embed $N(s)$ in the unique possible way.

{\bf Case 4}: Finally, if $s\in \IF$, and $s$ is a hinge, then we follow a similar approach to the $s\not\in \IF$ case, except that we now do not insist that the spin of $s$ in the embedding of $N(s)$ we produce coincides with the spin of $s$ in $\chi$ \utr; we just make sure that ($\rho$\ref{rhohin}) is satisfied, by embedding $N(s)$ so that the clockwise ordering of the $B_i$ in $N(s)$ coincides with the counterclockwise ordering of the $\mu(s,i)$ induced by the spin of $x$ in the embedding $\rho_{i-1}$; again this is well-defined unless we are in step $i=1$, in which case we just embed $N(s)$ with the spin of~$s$ in~$\chi$.

\medskip
Once $N(s)$ is embedded as above, we set $\xl_0:= N(s)$ and proceed by the following recursive procedure, which produces embeddings of an increasing sequence $\xl_1,\ldots, \xl_k(=\xl)$ of subtrees of \xl\ to embed the rest of \xl.
 
For $j=1,2,\ldots$, pick a leaf $v_{j}$ of $\xl_{j-1}$ which is not a leaf of \xl; if no such leaf exists then $\xl_{j-1}=\xl$ and we stop.
Then we extend the current embedding of $\xl_{j-1}$ by embedding $N(v_j)$ in such a way that the spin of $v_{j}$ coincides \utr\ with that induced by $\chi$, unless $v_{j}\in \IF\subseteq \SF$ and $v_j\neq 1$, in which case we do the following. 
Let $y_i=x_i v_j$ be the vertex of \ttt\ joined to $x_i$ by the edge labelled $v_j$. If no vertex of \ttt\ from the  $N(\RF)$-orbit of  $x_{i}$ or $y_i$ has been embedded yet by $\rho_i$, then we embed $N(v_{j})$ with local spin given by $\chi$. If some vertex of \ttt\ from the  $N(\RF)$-orbit of  $x_{i}$ has already been embedded by $\rho_i$, we embed $N(v_{j})$ with same spin \utr\ as we used so far for all $x_j, j<i,$ that are $N(\RF)$-equivalent to $x_{i}$; (we make this choice in order to satisfy ($\rho$\ref{rholocal})). Otherwise, we embed $N(v_{j})$ with the dual spin ---recall Definition~\ref{defphi}--- \utr\ of the spin we used so  far for all $x_j, j<i$ that are $N(\RF)$-equivalent to $y_{i}$. Note that these choices ensure that $N(v_{j})$ is embedded with the same spin \utr\ ---namely, either that induced by $\chi$ or its dual--- for all vertices in an $N(\RF)$-orbit, where we use the fact that, as $v_j\neq 1$, $x_i$ and $y_i$ are never in the same orbit.

In all cases, we still have the option of reflecting. If $v_j\in N(s)$, which means that $v_j\in \CF$ and $v_j$ contains the label $s$ of $e_i$, then we have to worry about satisfying ($\rho$\ref{rhotau}); but one of the two choices we have due to the option of reflecting will satisfy ($\rho$\ref{rhotau}) for $e=e_i$ and $B_i=v_j$ and we make that choice. (If $v_j\not\in N(s)$ then we do not worry about $\mu$ and $\tau$; the other endvertices of the edges incident with $x_i$ will make sure that this data is respected, just as we were careful above when embedding $N(s)$ for the label $s$ of $e_i$.)

Let $\xl_{j}:= \xl_{j-1} \cup N(v_{j})$.

\medskip

The procedure finishes when all of \xl\ has been embedded. Then, we contract all non-leafs of \xl\ to obtain the desired embedded star $S_i$ out of that embedding. Finally, we embed  $N_\ttt(x_i)$ with the same spin as $S_i$ to extend $\rho_{i-1}$ to~$\rho_{i}$. 

Let $\rho= \bigcup \rho_i$ be the limit of the $\rho_{i}$. We claim that $\rho$ satisfies conditions ($\rho$\ref{rhosig})--($\rho$\ref{rholast}). Indeed, if any of them is violated, then there is a first step in the above procedures violating it. But we designed all steps so that none of those conditions are violated: 
condition ($\rho$\ref{rhosig}) is never violated because we chose  $\chi$ so that the spin of every $B_i\in \CF$ coincides with $\sig(i)$ \utr, which implies that the corresponding edges of $x_i$ appear in that cyclic order \utr\ in $S_i$, and therefore in $\rho$, by the construction of the embedded star $S_i$.
Condition ($\rho$\ref{rhotau}) is never violated because of the way we embedded $N(v_{j})$ for  $v_j\in N(s)$ in the construction of $S_i$.
Condition ($\rho$\ref{rhohin}) is never violated because of the way we embedded $N(s)$ in the first step of the construction of $S_i$.
Finally, condition ($\rho$\ref{rholocal}) is never violated because of  the way we embedded $N(v_{j})$ for  $v_j\in \IF$ in the construction of $S_i$.

In fact, we obtain a slightly stronger property than ($\rho$\ref{rholocal}), and this will be useful later:
\labtequ{rholocal2}{Condition ($\rho$\ref{rholocal}) remains true if we define \emph{local spin} using $\xl$ instead of~$X$.}

\comment{	 $\tau: \SF \times \{1, \ldots, k\} \to \{0,1\}$ encodes the information of whether each generator is spin-preserving or spin-reversing in each $B_i$ it participates in (if $s\in \SF\sm B_i$, then the value of $\tau(s,i)$ will be irrelevant in the sequel); 
and \fe\ hinge $b\in \SF$, and every $i$ for which $b\in B_i$, $\mu(b,i)$ is a $B_j$ such that $b^{-1}\in B_j$, and $\mu(b,i)\neq \mu(b,k)$ for $k\neq i$.
This $\mu$ encodes the information of which pairs of $B_i$ incident with the two endvertices of a given hinge belong to the same block of \G. }

\subsection{Planarity of blocks} \label{placlu}

A \emph{block} of \g is an image $\pi([A])$ under the covering map $\pi$, where $A$ denotes a \prbl\ of \ttt\ and $[A]:=\{x\in V(\ttt) \mid x \simeq_N y \text{ for some $y\in A$}\}$ denotes its $N(\RF)$-equivalence class. 

Note that every block of \g is connected: given vertices $x,z$ in a block $K= \pi([A])$, we can find $x',z'\in A$ (and not just in the $N(\RF)$-orbit of $A$) with $\pi(x')=x, \pi(z')=z$, and so the $x'$--$z'$ path $P$ in $A$ yields the $x$--$z$ path $\pi(P)$ in~$K$.

\begin{lem} \label{clplanar}
Every block of \g is planar.
\end{lem}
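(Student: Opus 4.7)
My plan is to reduce \Lr{clplanar} to \Tr{thmplanar} by showing that each block $K = \pi([A])$ is isomorphic to the \Cg\ of a special planar presentation derived from $\PF$ and the pre-block~$A$. Fix such an $A$, and let $\SF_A \subseteq \SF'$ consist of the labels appearing on edges of~$A$ (equivalently, the union of those $B_i \in \CF$ that lie in the connected subgraph of the tree $X$ of~(S\ref{ssiv}) spanned by the $B_i$'s visited by~$A$). Let $\RF_A \subseteq \RF$ be those relators whose letters all lie in $\SF_A$; by the blocked condition (P\ref{gplprz}), these are precisely the relators inducing closed walks contained in~$K$.

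Next, I would endow $\PF_A := \langle \SF_A \mid \RF_A \rangle$ with a spin $\sigma_A$ and spin-behaviour $\tau_A$ obtained by assembling the $\sigma(i)$ of the $B_i$ contributing to $A$ along the gluings prescribed by $\mu$. The fact that $X$ is a tree ensures this assembly is unambiguous, and the resulting cyclic ordering of $\SF_A$ is exactly the one realised by $\rho$ at vertices of~$A$ in view of ($\rho$\ref{rhosig})--($\rho$\ref{rhohin}). Verifying $(\PF_A, \sigma_A, \tau_A)$ is a special \plpr\ then amounts to transferring (P\ref{gplpri})--(P\ref{gplprii}) into (sP\ref{plpri})--(sP\ref{plprii}): any crossing of two relators in $\RF_A$ in the sense of \Sr{secPlpr} would yield a crossing in the sense of \Sr{secPlprGen} via $\ttt$ and~$\rho$, contradicting (P\ref{gplpri}); and the parity condition (sP\ref{plprii}) follows directly from (P\ref{gplprii}).

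Finally, I would show $K \isom Cay(\PF_A)$. The map $\pi|_A: V(A) \to V(K)$ is surjective because every vertex of $[A]$ can be brought into $A$ via the $N(\RF)$-action on $\ttt$. Injectivity follows from the pre-block structure: any $A$-path between $x, y \in A$ with $\pi(x) = \pi(y)$ would spell a word in $N(\RF)$ which, by the blocked condition~(P\ref{gplprz}), must decompose through relators in $\RF_A$ alone; but this is incompatible with the path being a geodesic in the tree~$A$ unless $x = y$. The edges of $K$ are then either images of edges of $A$ or shortcuts corresponding to relators of $\RF_A$, giving $K \isom Cay(\PF_A)$, which is planar by \Tr{thmplanar}.

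The main obstacle will be the injectivity of $\pi|_A$ and the well-definedness of $\sigma_A$; both hinge on a careful use of the pre-block definition together with (P\ref{gplprz}), ruling out any relator that escapes $A$ and returns. An alternative more direct route would be to draw $K$ by transporting $\rho|_A$ via $\pi$ and to add each non-tree edge along the face dictated by the relator that creates it, using (P\ref{gplpri}) as the key non-crossing ingredient; but this variant requires essentially the same injectivity analysis in disguise.
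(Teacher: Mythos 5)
Your plan is to identify a block $K=\pi([A])$ with the Cayley graph of a sub-presentation $\PF_A=\langle\SF_A\mid\RF_A\rangle$ and then apply \Tr{thmplanar}. This fails at the identification step, and the failure is not a technicality.

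First, $\pi|_A$ is essentially never injective: a pre-block $A$ is a subtree of the universal cover $\ttt$, which is a tree, so $A$ is acyclic; but blocks of \G\ are supposed to contain cycles (the whole point of the construction is that relators induce cycles inside blocks). Concretely, if $a^n\in\RF$ with $a\notin\IF$, the pre-block containing the $a$-edges is a bi-infinite path in $\ttt$, and $\pi$ wraps it around an $n$-cycle of \G. Your injectivity argument misfires because the conclusion ``incompatible with the path being a geodesic in the tree $A$'' does not follow from ``spells a word in $N(\RF)$'': tree paths are always geodesics, and a geodesic in $\ttt$ can perfectly well spell a word in $N(\RF)$ --- this is exactly what covering means. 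What you would actually want is the statement that $K$ is (isomorphic to) $A/\!\sim$ for a suitable identification, and that this quotient equals $Cay(\PF_A)$; but the second equality is precisely a normal-form statement for an implicit graph-of-groups decomposition of $\Gamma$, amounting to the claim that a word over $\SF_A$ trivial in $\Gamma$ already lies in the normal closure of $\RF_A$. At the point of \Lr{clplanar} this has not been established --- in fact the paper needs \Lr{clplanar} first, and only afterwards (\Lr{hinsep}) proves that the hinges separate \G\ as expected, which is the combinatorial content of that graph-of-groups structure. Your reduction is therefore circular.

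The paper sidesteps all of this by not trying to present $K$ as a Cayley graph. It fixes a fundamental domain $\cls D$ of $K$ in $\ttt$, uses the embedding $\rho$ of $\ttt$ from \Sr{secrho}, and shows via a crossing-parity argument in the dual $\ttt^*$ (\Lr{We}, \Lr{faces}, \Lr{crossep}, with the key $N(\RF)$-invariance in \Lr{Ninv2} and the parity input \Lr{evenTb}) that the pairs of $N(\RF)$-equivalent midpoints on $\partial D$ are nested; planarity of $K$ then follows as in the proof of \cite[Theorem~4.2]{planarpresI}. This argument only uses the covering $\pi$, the embedding of $\ttt$, and (P\ref{gplprz})--(P\ref{gplpriii}), so it never needs to know that $K$ is a Cayley graph or that $\RF_A$ presents its group. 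Your alternative ``more direct'' variant (drawing $K$ by transporting $\rho|_A$ and attaching the non-tree edges face by face) is in spirit what the paper does, but, as you yourself note, it hides the same identification problem and would have to be carried out at the level of $\cls D$ and nested midpoint pairs, not at the level of a putative bijection $\pi|_A$.
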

%
In fact, we will prove a stronger statement similar to \Tr{thmplanar} (\cite[Theorem 4.2]{planarpresI}), namely, that every block admits an embedding into $\R^2$ respecting $\sig$ and $\tau$.

The proof of this follows the lines of our proof of the planarity of \g in the consistent case (\cite[Theorem 4.2]{planarpresI}), and we assume that the reader has already understood that proof. Here we will point out the differences.

Let $K$ be a \blk\ of \G. Let $D$ be a fundamental domain of $K$ in \ttt; that is, $D$ is a subset of $\ttt$ containing exactly one point from each $N(\RF)$-orbit $O$ such that $\pi(O)\in K$. With the same argument as in~\cite[Lemma 4.1]{planarpresI} we may assume that $D$ is connected since $K$ is. Moreover, we may assume without loss of generality that $D$ is a union of stars. Thus the closure $\cls{D}$ of $D$ in $\ttt$ is still the union of $D$ with all midpoints of edges that have exactly one half-edge in $D$, and $K$ can be obtained from $\cls{D}$ by identifying pairs of $N(\RF)$-equivalent midpoints.
As in the proof of \cite[Theorem 4.2]{planarpresI}, we will prove that any two pairs of such $N(\RF)$-equivalent midpoints are \emph{nested}, where we say that two pairs of midpoints $x,x'$ and $y,y'$ in $\overline{D}\setminus D$ are nested, if the $x$-$x'$ path in $D$ does not cross the $y$-$y'$ path, where we define crossing similarly to Section~\ref{secDem}.

In order to guarantee this nestedness, we will have to embed \ttt\ appropriately; 
in our general setup, \ttt\ cannot be embedded consistently as in the case of \splpr s, and this is why we are now only trying to prove the planarity of a \blk, and not of all of \g at once.

For a  relator $W$, we use $W_o$ to denote the closed walk $o_G W$ in \g induced by $W$ at $o_G$, and let $\ttt_W:=\pi^{-1}(W_o)$, which is a union of a set of double-rays of $\ttt$, which set we denote by $\ttt[W_o]$. 



Recall we have chosen an embedding $\rho$ of \ttt\ in \Sr{secrho}. For a \prbl\ $C$ of \ttt, we define a \emph{\suf} of $C$ to be a face of the embedding $\sig(C)$ of $C$ inherited by $\rho$. The \suf s of \ttt\ are the \suf s of all of the \prbl s of \ttt. Note that a \suf\ can contain several faces of~\ttt. 

The {\em dual} graph $\ttt^*$ of \ttt\ is the graph whose vertex set is the set of faces of \ttt, and two faces of \ttt\ are joined with an edge $e^*$ of~$\ttt^*$ whenever their boundaries share an edge $e$ of~$\ttt$.
For two faces $F,H$ of \ttt\ and an $F$--$H$~path $P_{FH}$ in $\ttt^*$, let $Cr(\ttt[W_o],P_{FH})$ denote the {\em number of crossings} of $\ttt[W_o]$ by $P_{FH}$; to make this more precise, for a double-ray $T$ in $\ttt[W_o]$, we write $cr(T, P_{FH})$ for the number of edges $e$ in $T$ such that $P_{FH}$ contains $e^*$, and we let  $Cr(\ttt[W_o],P_{FH}):= \sum_{T\in \ttt[W_o]} cr(T,P_{FH})$. We claim that
\labtequ{Cr}{for every two faces $F,H$ of \ttt, the parity of the number of crossings $Cr(\ttt[W_o],P_{FH})$ is independent of the choice of the path $P_{FH}$.}
To see this, note that if $C$ is a cycle in $\ttt^*$, then $Cr(\ttt[W_o],C)$ ---defined similarly to $Cr(\ttt[W_o],P_{FH})$--- is even because the embedding of \ttt\ is \vapf\ and so any ray entering the bounded side of $C$ has to exit it again. This immediately implies~\eqref{Cr}.

\comment{
REMOVE:\\
For reasons that will become clear later, it is convenient to introduce an alternative set of relators $\RF'$ of \G, defined as follows. This set of relators however is very similar to $\RF$, and the reader may assume that $\RF'=\RF$ and skip to the definition of $\sim$ below for the time being and come back later (namely, while reading the proof of \Lr{Ninv2}, which is the only occassion where this presentation is useful).

\begin{defi} \label{defR'}

For any $d,c\in S$ for which there is an $R\in \RF$ of the form $dUcU^{-1}=1$ ---where $U$ is an arbitrary, possibly trivial word--- we write $d \sim c$. Note that this is a symmetric relation. We let $\approx$ denote the transitive closure of $\sim$. Note that this is an equivalence relation on $\IF$. Then $\RF'$ is obtained from $\RF$ 
as follows. For every hinge $b\in \IF$ \st\ $b=1$, we add the relator $c=1$ for every $c$ in the $\approx$-equivalence class $[b]$, and we remove all relators of the form $dUcU^{-1}=1$ with $d,c\in [b]$ from  $\cR$.
\end{defi}

\begin{lem}\label{cPb}
The $Cay(\SF,\RF)= Cay(\SF,\RF')$.
\end{lem}
\begin{proof}
We need to prove that $c=1$ holds in the group of $\PF$ \fe\ $c\in [b]$, and that any word of the form $dUcU^{-1}$ with $d,c\in [b]$ can be proved to be equal to $1$ using relators of $\PF'$.

For the first statement, note that if $dUcU^{-1}\in \RF$, then $d=1$ implies $c=1$ and vice-versa. This means that either all elements of  $[b]$ are equal to 1 or none of them is by transitivity. But we are assuming that $b=1$, and therefore the former is the case.

This implies the second statement as well: if $bUcU^{-1} \in \RF$ and $d,c\in [b]$, then $b=c=1$ and so such relators are redundant when $b=1$ holds as they boil down to words of the form $UU^{-1}$.
\end{proof}

\begin{lem}\label{LRF'}
For $b\in \IF$ with $b=1$, and any relator $W$ in $\RF'$, the number of elements of $\ttt[W_o]$  containing any edge $e$ labelled by $b$ is even. 
\end{lem}
\begin{proof}
Since $b=1$, every element of $T\in \ttt[W_o]$ corresponds to another element $T'\in \ttt[W_o]$ obtained by translating $T$ from one endvertex of $b$ to the other. We have $T'=T$ if and only if $W$ is of the form $bUcU^{-1}$, because TODO. But as every such relator has been excluded from $\RF'$, the elements of $\ttt[W_o]$ containing $e$ come in pairs $T,T'$.
\end{proof}

$$==========================$$
We are now ready to define $\sim$. 
}
We will define our relation $\sim_K$, or just $\sim$ if $K$ is fixed, on the set of \suf s of \prcl s contained in  $\pii(K)$.
Given two \suf s $F,H$ lying in \prcl s contained in  $\pii(K)$, let $\ttt[W_o]_K$ denote the subset of $\ttt[W_o]$ 
contained in $\pii(K)$.
Now pick two faces $F'\subseteq F,H'\subseteq H$ contained in the \suf s $F,H$, and write $F\sim H$ if for each $F'$--$H'$ path $P_{F'H'}$ in $\ttt^*$, the number of crossings $Cr(\ttt[W_o]_K,P_{F'H'})$ of $\ttt[W_o]_K$ by $P_{F'H'}$ is even. Since $Cr(\ttt[W_o]_K,P_{F'H'})$ is independent of the choice of $P_{F'H'}$ by~\eqref{Cr}, it is also independent of the choice of $F',H'$, because if $F''$ is another face contained in~$F$, then the $F'$--$F''$ path of $\ttt^*$ contained inside $F$ crosses no element of $\ttt[W_o]_K$, because a \suf\ of any \prcl\ $C$ in $\pii(K)$ meets no element of $\ttt[W_o]_K$ by the definitions.


\subsubsection{The bipartitions $\{I,O\}$}

An important part of our planarity proof in the consistent case was that $\sim$ was invariant under the action of $N(\RF)$, see \cite[Lemma 4.4]{planarpresI}. Below (\Lr{Ninv2}) we prove an analogous statement for the general case, namely that the restriction of $\sim$ to the \suf s of the \prbl s in $\pii(K)$ is $N(\RF)$-invariant.

The rest of our proof is almost identical to that of \cite[Theorem 4.2]{planarpresI}, except that we are now working with the \blk\ $K$ of \g rather than the whole graph. 

The equivalence relation $\sim$, now restricted on the set of \suf s $\FF$ of $\pii(K)$, uniquely determines a bipartition $\{I,O\}$ on $\FF$ by choosing one \suf\ $F\in \FF$ and letting $I:=\{H\in \FF \mid H \sim F\}$ and $O:= \FF \sm I$.

\medskip

Next, we adapt the material of \cite[Section 4.3.1]{planarpresI} to our new setup. For every \suf\  $F$ in $\pii(K)$, glue a copy of the domain $\overline{F}\subset \R^2$ to $K$ by identifying each point of $\partial F$ with $\pi(\partial F)$.
If $F,F'$ are equivalent face boundaries, in other words, if $\pi(\partial F)=\pi(\partial F')$, then we identify the corresponding 2-cells glued onto~$K$. Let $\cellsk$ denote the set of these 2-cells, and let $\Kc=K \cup \cellsk$ denote the 2-complex consisting of $K$ and these 2-cells. 


\Lr{Ninv2} now means that if $Z$ is a closed walk of \g (here we really mean~$G$ and not just $K$) induced by a relator, then $\{{I},{O}\}$ induces a bipartition $\pi[I],\pi[O]$ of $\cellsk$. 
Let us still denote this bipartition of $\cellsk$ by {\em $B_Z$}.

We extend that bipartition to an arbitrary cycle in $K$: given a cycle $C$ of $K$, we choose a `proof' $P$ of $C$; that is, a sequence of closed walks  $W_i, 1\leq i \leq k$ of \g induced by rotations of relators such that $C=\sum_{1\leq i\leq k} W_i$. The existence of such a sequence $(W_i)$ is not affected by the fact that we are focusing on a subgraph $K$; the $W_i$ are allowed to be arbitrary relators. For every $W_i$, let $I_{W_i},O_{W_i}$ denote the two sides of the bipartition $B_{W_i}$ of $\cellsk$ from above, and define the bipartition $B_C:=\{I_C,O_C\}$ of  $\cellsk$  by $I_C:= \sydi_i I_{W_i}$ and $O_C:= \cells \sydi I_C$.

While in the definition of $B_C$ it appear that it depends on the proof $P$, it actually does not as we shall see later. Until then, we denote it by $B_C(P)$ to make it clear that it depends on~$P$.
Our next aim is to show that, in a certain way, $B_C(P)$ behaves like the bipartition of the faces of a plane graph induced by a cycle $C$: to move between the two sides, one has to cross an edge of $C$. This is achieved by \Lr{faces} below, for the proof of which we need the following. 

\begin{lem} \label{We}
Let $e$ be a directed edge of~$K$, let $W\in \RF$ be a relator which is not of the form $b^2=1$ for $b\in \SF$, and let $o_K W$ be the closed walk of $K$ rooted at some vertex $o_K$ of~$K$ induced by $W$. Then the number of double-rays in $\ttt[W_o]$ containing $e$ equals the number of times that $o_K W$ traverses $\pi(e)$. 
\end{lem}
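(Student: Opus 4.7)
The plan is to build a bijection between the positions at which the closed walk $o_K W$ traverses $\pi(e)$ and the double-rays in $\ttt[W_o]$ containing a fixed preimage $e'$ of $e$ in $\ttt$.

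First I would observe that, since $W$ is not of the form $b^2$, the word $W$ represents a non-identity element $w$ of the group $\langle\SF\mid s^2,\ s\in\IF\rangle$ underlying $\ttt$. Hence the lift of $o_G W$ (with $o_G:=\pi(o_K)$) starting at any preimage $p$ of $o_G$ ends at the distinct vertex $pw$, and iterating this lift biinfinitely yields a genuine double-ray rather than collapsing to a finite walk. Every $T\in\ttt[W_o]$ arises in this way, inheriting a natural $W$-grain (the positions where one period of $W$ ends and the next begins).

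For each position $i\in\{1,\dots,|W|\}$ at which $o_K W$ traverses $\pi(e)$, I would construct $T_i\in\ttt[W_o]$ as follows. Declare that the $i$-th edge of one period of the lift of $W$ coincides with $e'$; this determines the starting preimage $p_i$ of $o_G$, reached by walking $i-1$ steps backward from the initial vertex of $e'$ according to the first $i-1$ letters of $W$ read in reverse. Iterating $W$ forward and backward from $p_i$ produces $T_i$, which contains $e'$. Conversely, any $T\in\ttt[W_o]$ through $e'$ places $e'$ at some position $i$ within a fundamental period of its $W$-grain, and this $i$ must be a position at which $W$ traces $\pi(e)$; hence $T=T_i$.

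The main obstacle will be proving injectivity of $i\mapsto T_i$. An identification $T_{i_1}=T_{i_2}$ with $i_1<i_2$ would force $p_{i_1}$ and $p_{i_2}$ to lie in the same $\langle w\rangle$-orbit, which rearranges to the statement that the subword of $W$ strictly between positions $i_1$ and $i_2$ equals a power of $w$ in $\langle\SF\mid s^2,\ s\in\IF\rangle$. Since this group is a free product of cyclic groups, centralisers of non-identity elements are cyclic and well-controlled, so a proper subword of $W$ can equal $w^n$ only when $W$ trivialises modulo the relations $s^2$, which is precisely the excluded case $W=b^2$. This contradiction yields injectivity, and combining with the surjective construction above gives the claimed equality of cardinalities.
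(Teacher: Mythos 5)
Your proof runs the same bijection the paper uses, just viewed from the opposite side: the paper fixes one lift $T$ of $o_K W^\infty$ and counts the $N(\RF)$-translates of $e$ inside a fundamental period $Q$ of $T$; you instead start from the traversal positions of $\pi(e)$ in $o_K W$ and build a lift $T_i$ for each. These are inverse descriptions of the same correspondence, so the construction itself is the one the paper has in mind. The paper disposes of bijectivity with the single clause ``by the definition of $\ttt[W_o]$''; you try to prove injectivity in detail, and that is where a genuine gap appears.

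The assertion that $T_{i_1}=T_{i_2}$ would force $p_{i_1}$ and $p_{i_2}$ into the same $\langle w\rangle$-orbit is not true. A directed double-ray does not remember its $W$-grain: if $W$ is cyclically a proper power $U^m$, the two lifts whose grains are offset by $|U|$ are the \emph{same} directed double-ray, while their basepoints $p_{i_1}, p_{i_2}$ differ by (a conjugate of) $U$, which is not a power of $w=U^m$. The rearrangement you carry out next also misstates the relation -- what you obtain is that a \emph{conjugate} of the subword between positions $i_1$ and $i_2$ equals $w^n$, not the subword itself -- and the appeal to centralisers in $\langle\SF\mid s^2,\ s\in\IF\rangle$ being cyclic does not yield the stated conclusion that $W$ must trivialise modulo the $s^2$-relations; nor is ``$W$ trivialises in the free product'' the same thing as ``$W=b^2$ for some $b\in\SF$''. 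Your opening claim that $W\neq b^2$ already forces $w\neq 1$ in the free product likewise needs an argument; it is not a literal consequence of the hypothesis. What actually makes the count come out right -- and what the paper's terse assertion is implicitly leaning on -- is that the relators obtained via Theorem~\ref{thm_fundgr} are indecomposable and therefore induce \emph{cycles} of $G$: then $o_K W$ traverses $\pi(e)$ at most once, both sides of the identity are $0$ or $1$, and there is no injectivity to prove. A self-contained argument should reduce to that structural fact about $W_o$ rather than reach for free-product combinatorics.
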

\begin{proof}
If $o_K W$ does not traverse $\pi(e)$ then $\ttt[W_o]$ avoids $e$ and we are done. So suppose that $o_K W$ does traverse $\pi(e)$.
Let $o_K W^\infty$ denote the two-way infinite walk on~$K$ obtained by repeating $o_K W$ indefinitely. Let $T\in \ttt[W_o]$ be the lift of $o_K W^\infty$ to \ttt\ (via $\pi^{-1}$) sending $\pi(e)$ to $e$, and note that $T$ is a double-ray containing $e$. Let $Q$ be the subpath of $T$ that starts with $e$ and finishes when a rotation of the word $W$ is completed. By the definition of $\ttt[W_o]$, there is a 1--1 correspondence between the elements of $\ttt[W_o]$ containing $e$ and the directed edges $e'$ in $Q$ that are $N(\RF)$-equivalent to $e$: each such element of $\ttt[W_o]$ can be obtained by translating $T$ by the automorphism of \ttt\ sending $e'$ to $e$. 

Now note that $o_K W$ traverses $\pi(e)$ whenever its lift $T$ traverses one of those~$e'$. Combined with the above observations this proves our assertion.
\end{proof}

\begin{lem} \label{faces}
For every $e\in E(K)$,  the bipartition  $B_C(P)$ separates 2-cells of $e$ if and only if $e\in C$.
\end{lem}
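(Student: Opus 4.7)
The plan is to reduce the statement to the case of a single relator, and then to compute the parity that defines $B_W$ directly in terms of traversals of $e$, via \Lr{We}.

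First, by the definition of $B_C(P)$, we have $I_C = \sydi_i I_{W_i}$ in $\cellsk$, so $B_C(P)$ separates the two 2-cells of $e$ iff an odd number of the $B_{W_i}$ do. Since $C = \sum_i W_i$ in the cycle space of $K$, we also have $e\in C$ iff the $W_i$ collectively traverse $e$ an odd number of times. It therefore suffices to prove the following single-relator claim: \emph{for $W\in\RF$ not of the form $b^2$ with $b\in\IF$, $B_W$ separates the two 2-cells of $e$ iff $o_K W$ traverses $e$ an odd number of times.} The excluded relators $b^2$ induce length-two back-and-forth closed walks on a single undirected edge, contribute trivially to the cycle space, and give rise to no 2-cells that can separate the two sides of any edge, so they can be disregarded in this reduction.

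To prove the claim, I would fix a preimage $\tilde e$ of $e$ in $\pii(K)$ and let $F, H$ be the two \suf s of $\pii(K)$ adjacent to $\tilde e$ on its two sides, so that $\pi(F)$ and $\pi(H)$ are precisely the two 2-cells of $e$ in $\Kc$. That $F$ and $H$ are genuinely distinct follows from the careful construction of $\rho$ in \Sr{secrho}, which embeds the pre-block containing $\tilde e$ so that the two faces of $\ttt$ locally on each side of $\tilde e$ lie in two different \suf s of that pre-block. By the definition of $\sim$ and by~\eqref{Cr}, $B_W$ separates $\pi(F)$ from $\pi(H)$ iff $F\not\sim H$ iff $Cr(\ttt[W_o]_K, P_{F'H'})$ is odd for \emph{any} choice of $F'$--$H'$ path $P_{F'H'}$ in $\ttt^*$ with $F'\sub F$ and $H'\sub H$. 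Take $P_{F'H'}$ to be the single dual edge $\tilde{e}^{\,*}$ of $\tilde e$, with $F', H'$ the two faces of $\ttt$ on either side of $\tilde e$. Then the crossing number becomes exactly the number of double-rays in $\ttt[W_o]_K$ that contain $\tilde e$.

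Finally, condition~(P\ref{gplprz}) (every relator is blocked) ensures that $o_K W$ remains inside the single block $K$, so every double-ray of $\ttt[W_o]$ through $\tilde e\in\pii(K)$ is contained in $\pii(K)$ and hence belongs to $\ttt[W_o]_K$. The crossing number therefore coincides with the total number of double-rays of $\ttt[W_o]$ through $\tilde e$, which by \Lr{We} equals the number of times $o_K W$ traverses $e$. This proves the claim and hence the lemma. The principal obstacle is verifying the distinctness of the \suf s $F$ and $H$ adjacent to $\tilde e$ and that choosing $\tilde{e}^{\,*}$ as $P_{F'H'}$ really captures precisely the desired count; both points rely crucially on the embedding $\rho$ constructed in \Sr{secrho} and on~(P\ref{gplprz}).
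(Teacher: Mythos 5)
Your proof follows essentially the same route as the paper's: the paper also reduces to a single-relator parity statement (its unproved interior claim~\eqref{FeH}, combined with \Lr{We}), and proving~\eqref{FeH} amounts to exactly your step of taking the dual edge $\tilde{e}^{\,*}$ as the $F'$--$H'$ path. Your version usefully makes explicit the derivation of~\eqref{FeH} from the definition of~$\sim$ together with~\eqref{Cr}, \Lr{We}, and the blocked-relator condition, but the underlying argument is the one the paper intends.
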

\begin{proof}
Let $I,O$ be the two elements of $B_C(P)$ as defined above. Then, letting $1_{F\in I}$ denote the indicator function of $F\in I$, we have  
$$1_{F\in I} = N_F:= | \{W_i \mid F\in I_{W_i}\} | \pmod 2,$$ 
and similarly 
$$1_{H\in I} = N_H:= | \{ W_i \mid H\in I_{W_i}\} | \pmod 2.$$
But 
$$N_F+N_H = | \{ W_i \mid W_i \text{ separates } F \text{ from } H\} | \pmod 2 $$ by the construction of $I,O$. We claim that $| \{ W_i \mid W_i \text{ separates } F \text{ from } H\} |$ is odd if and only if $e\in E(C)$. Indeed, $B_{W_i}$ separates $F$ from $H$ exactly when $W_i$ traverses $e$ an odd number of times by
\labtequ{FeH}{for every edge $e$ of \ttt, the two faces $F,H$ of $e$ lie in distinct elements of $\{{I},{O}\}$ if and only if $e\in \ttt_W$ and $e$ lies in an odd number of elements of $\ttt[W_o]$}
and \Lr{We}, 
and $e$ is in $C$ exactly when there is an odd number of $W_i$ that traverse $e$ an odd number of times. 

Since that number is even if $e\not\in E(C)$ and odd otherwise, our last congruence yields $N_F+N_H = 1 \pmod 2$ if and only if $e\in E(C)$. Therefore, the previous congruences imply that $1_{F\in I}= 1_{H\in I}$ if  $e\not\in E(C)$ and $1_{F\in I}\neq 1_{H\in I}$ if  $e\in E(C)$, which is our claim.
\end{proof}

\Lr{faces} implies in particular that $B_C(P)$ is characterised by $C$ alone and is therefore independent of $P$, since $\Kc$ was defined without reference to $P$. Thus we can denote it by just $B_C$ from now on.

In the following, we use again the definition of a crossing from \Sr{secDem}.

\begin{lem} \label{crossep}
Let $C'$ be a finite path  of \ttt\ such that $C:=\pi(C')$ is a cycle of~$K$, and let $Q=e\kreis{Q}f$ be a crossing of $C'$ in \ttt. Then $B_C$ separates the 2-cells incident with $\pi(e)$ from the 2-cells incident with $\pi(f)$. Moreover, if $Q_2$ is a path of \ttt\ such that $\pi(Q_2)$ is a cycle of~$K$, then $Q_2$ crosses $C'$ an even number of times.
\end{lem}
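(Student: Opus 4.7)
The plan is to derive both claims from \Lr{faces}, which says $B_C$ separates the two 2-cells across an edge of $\overline K$ precisely when that edge belongs to~$C$. Two preliminaries help. First, each edge of $K$ bounds exactly two 2-cells of $\overline K$, one for each side of any lift in $\ttt$; this follows from the $\pi$-equivalence of \suf s (if $e'=ge$ for $g\in N(\RF)$, then the \suf s at $e'$ and $e$ have $\pi$-equivalent boundaries) together with the $N(\RF)$-invariance from \Lr{Ninv2}. Second, the parity of $\pi^{-1}(E(C))$-crossings along a dual walk in $\ttt^*$ between two fixed faces is path-independent, because each elementary cycle in $\ttt^*$ surrounds a single vertex $v$ of $\ttt$ and the star at $v$ contains either $0$ or $2$ edges projecting into $E(C)$, according as $\pi(v)\in V(C)$ or not.

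For the first claim, pick faces $F_e,F_f$ of $\ttt$ adjacent to $e,f$ on the sides of $C'$ singled out by the crossing, and compute the $\pi^{-1}(E(C))$-crossing parity along a convenient $F_e$--$F_f$ dual path in $\ttt^*$: skirt the non-$C'$ endpoint of $e$, cross exactly one edge of the interior sub-path $\kreis Q\subset C'$, and symmetrically reach $F_f$. Any incidental $\pi^{-1}(E(C))$-crossings at the tips of $e,f$ cancel modulo~$2$ by the elementary-cycle observation above, so the parity equals~$1$; iterating \Lr{faces} along the projection of this dual path to $\overline K$ then gives that the 2-cells of $\overline K$ corresponding to $F_e,F_f$ lie on opposite $B_C$-sides. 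Since $\pi(e),\pi(f)\notin E(C)$ in the situation at hand (as $e,f\notin E(C')$ and, under the local crossing hypothesis, no deck translate of $C'$ identifies them with $C'$-edges), \Lr{faces} combined with the first preliminary places the two 2-cells incident with $\pi(e)$ together on one $B_C$-side and the two 2-cells incident with $\pi(f)$ on the other.

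For the second claim, enumerate the crossings $Q^{(1)},\dots,Q^{(k)}$ of $C'$ by $Q_2$ in the order they appear along $Q_2$, with entering and leaving edges $e^{(j)},f^{(j)}$. The first claim places the 2-cells incident with $\pi(e^{(j)})$ on the opposite $B_C$-side from those incident with $\pi(f^{(j)})$. Along each sub-walk of $Q_2$ strictly between two consecutive crossings no crossing of $C'$ occurs, and the extension step from the first claim then keeps the tracked $B_C$-side constant over these stretches. Hence each crossing of $C'$ by $Q_2$ contributes exactly one flip of the tracked side, and since $\pi(Q_2)$ is a closed walk returning to its starting edge-and-2-cell configuration, the total number of flips $k$ must be even.

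I expect the main obstacles to be the path-independence bookkeeping in $\ttt^*$ around vertices whose $\pi$-image lies on~$C$ (this is where the elementary-cycle count with $0$ or $2$ $E(C)$-edges per star is essential) and the clean verification that each edge of $K$ bounds exactly two 2-cells of $\overline K$. These rely on \Lr{Ninv2} and the combinatorial characterisation of \Lr{faces}, and must be set up carefully since $\overline K$ is \emph{a priori} not embedded in any surface, so that ``the two sides of an edge'' is not otherwise a meaningful notion.
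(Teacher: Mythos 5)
Your proposal follows essentially the same route as the paper: both arguments construct a dual path in $\ttt^*$ joining a face near $e$ to a face near $f$, reduce the first claim to parity of side-changes via \Lr{faces} together with the $N(\RF)$-invariance supplied by \Lr{Ninv2}, and derive the second claim from the first by tracking $B_C$-sides along $Q_2$ (the paper's decomposition into subarcs $C_1D_1C_2\ldots$). The paper's version sidesteps your more elaborate $\pi^{-1}(E(C))$-parity and path-independence setup by choosing a dual path that visits exactly the faces incident with $Q$, so that exactly one crossed edge lies in $C'$ (and since, as you implicitly use, an edge incident with $C'$ but not in $C'$ cannot project into $E(C)$ because $\pi$ is a local isomorphism, this directly yields the odd parity); the subtleties you flag about ``two sides of an edge'' in $\Kc$ are also left implicit there.
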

\begin{proof}
Let $F$ be a face incident with the first edge $e$ of $Q$, and let $H$ be a face incident with the last edge $f$ of $Q$.
By the definition of a crossing, we can find a finite sequence $(F=)F_1, \ldots, F_k(=H)$ of faces of \ttt\ such that each $F_i$ shares an edge $e_i$ with $F_{i+1}$ and exactly one of the $e_i$ lies in $C'$: we can visit all faces incident with $Q$ until we reach~$H$. By \Lr{faces} and \Lr{Ninv2}, $B_C$ separates $\pi(F_1)$  from $\pi(F_k)$. 
This proves our first assertion.
\medskip

For the second assertion, note that $\pi(Q_2)$ can be written as a concatenation of subarcs $C_1 D_1 C_2 D_2 \ldots C_k=C_1$ where each $C_i$ lifts to a crossing of $C'$ by $Q_2$ and each $D_i$ avoids $C$ and shares exactly one end-edge with each of $C_{i}$ and $C_{i+1}$. 
We proved above that the 2-cells incident with end-edges of each $C_i$ are separated by $B_C$. The same arguments imply that  the 2-cells incident with end-edges of each $D_i$ are {\em not} separated by $B_C$. Since $\pi(Q_2)$ is a cycle, this implies that $Q_2$ crosses $C'$ an even number of times. 
\end{proof}

As in the end of the proof of \Tr{thmplanar}, the last lemma says that any two cycles of $K$ cross each other an even number of times, and therefore any two pairs of identified points of $\cls{D}$ are nested.

This completes the proof of \Lr{clplanar}, except that we still have to prove the two lemmas we used above:


\begin{lem}\label{evenTb}
For $b\in \IF$ with $b=1$, and any relator $W$ in $\RF$, the number of elements of $\ttt[W_o]$  containing any edge $e$ labelled by $b$ is even. 
\end{lem}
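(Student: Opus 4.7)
The strategy is to exhibit a fixed-point-free involution on
\[
S := \{T \in \ttt[W_o] : e \in E(T)\},
\]
which will immediately imply $|S|$ is even. Let $x$ and $y := xb$ be the endpoints of the $b$-edge~$e$ in $\ttt$. Since $b = 1$ in~$\Gamma$, both $x$ and $y$ lie in the same fibre of $\pi$, so the element $n_e := xbx^{-1}$ of $\tilde{\Gamma}$ lies in the deck group $N(\RF)/N_0$. As a conjugate of the involution $b$ (recall $b^2 \in N_0$ because $b \in \IF$), the automorphism $n_e$ is itself an involution of~$\ttt$, and by construction it swaps $x$ and $y$, hence preserves the unoriented edge~$e$.

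Being a deck transformation, $n_e$ preserves $\ttt_W = \pi^{-1}(W_o)$, and therefore permutes the double-rays constituting $\ttt[W_o]$. Combining with $n_e(e) = e$, the map $T \mapsto n_e T$ is an involution of~$S$. Thus $|S|$ has the same parity as the cardinality of the $n_e$-fixed subset of~$S$, and it suffices to show this subset is empty (or, failing that, of even size).

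Assume towards contradiction that $n_e T = T$ for some $T \in S$. Parametrise $T = (v_j)_{j\in\Z}$ so that $v_0 = x$ and $v_1 = y$. Since $n_e$ is a label-preserving graph automorphism restricting to an isometry of~$T$ that swaps adjacent vertices $v_0$ and $v_1$, it necessarily acts as the reflection $v_j \mapsto v_{1-j}$. Writing $a_j \in \SF'$ for the label of the edge $v_j v_{j+1}$ (so $a_0 = b$), comparing the labels of $n_e$-corresponding directed edges yields the palindromic identity $a_j = a_{-j}^{-1}$ for every~$j\in\Z$. Since $T$ is a lift of $W_o^\infty$, the sequence $(a_j)$ is $n$-periodic with $n := |W|$. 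These two constraints together force $W$, up to rotation, to be of the form $bUcU^{-1}$ with $U$ some word and $c$ an involution (the fact that $W$ is cyclically reduced excludes the odd-length case, in which the middle letters $a_m,a_{m+1}$ would cancel). Moreover, from $W = 1$ and $b = 1$ in~$\Gamma$ one deduces $c = 1$ as well.

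The main obstacle is to rule out this palindromic scenario using the \gcplpr\ hypotheses. The plan is to exploit the blocked condition~(P\ref{gplprz}) and the no-redundancy condition~(P\ref{gplpriii}): the blocked condition on a relator $W = bUcU^{-1}$ restricts how $U$ may traverse blocks, and combined with $b=c=1$ one can extract a strictly shorter word (intuitively, either $c$ itself or a conjugate of $c$ by a prefix of $U$) that lies in $N(\RF)$ and appears as a sub-word of a rotation of~$W$, contradicting~(P\ref{gplpriii}). Working out this combinatorial contradiction cleanly—in particular handling the sub-case $c = b$, where $W$ has two occurrences of the problematic letter and one must pair the $n_e$-fixed lifts among themselves via the analogous involution $n_f$ for the mid-edge~$f$ labelled~$c$—is the step I expect to be the most delicate.
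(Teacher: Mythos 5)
Your setup is correct: you identify the same deck transformation $\beta = n_e$ the paper uses, correctly argue it is an involution of~$\ttt$ swapping the endvertices of~$e$, and correctly reduce the problem to showing $n_e$ has no fixed points on~$S$ (or an even number of them). From that point on, however, you miss the one-line observation that finishes the proof: the elements of $\ttt[W_o]$ are \emph{directed} double-rays (the paper's definition of double-ray is ``a directed 2-way infinite path''). Since $n_e$ swaps the two endvertices of~$e$, the image $n_e(T)$ traverses~$e$ in the opposite direction to~$T$; hence $n_e(T)\ne T$ as directed double-rays, even when they coincide as undirected paths. So $n_e$ is fixed-point-free and $|S|$ is even, with no case analysis needed. (This is exactly what the paper does, remarking ``$T\ne T'$ even if $T,T'$ contain the same vertices, because they have opposite directions.'')

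Because you overlook the orientation, you instead go after the palindromic fixed-point scenario $W\sim bUcU^{-1}$ with $b=c=1$ and try to rule it out using~(P\ref{gplprz}) and~(P\ref{gplpriii}). This part of the proposal is a genuine gap: the \gcplpr\ conditions do not obviously exclude such relators (relators of the form $aba^{-1}b$ do occur in legitimate \gcplpr s, as in the example preceding \fig{DSS}), so deducing a shorter redundant relator from the blocked condition needs an actual argument, which you explicitly defer as ``the step I expect to be the most delicate.'' As written, the fixed-point analysis is not closed. The fix is simply to drop it in favour of the orientation argument.
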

\begin{proof}
Let $T$ be an element of $\ttt[W_o]$ containing $e$.
The automorphism $\beta$ of \ttt\ exchanging the two endvertices of $e$ maps  $T$ to an element $T'$ of $\ttt[W_o]$ because $b=1$ and so the two end-vertices of $e$ are $N(\RF)$-equivalent. Note that $T\neq T'$ even if $T,T'$ contain the same vertices, because they have opposite directions (remember that double-rays are directed by definition). Note that $\beta(T')=T$. Therefore, $\beta$ establishes a bijection without fixed points on the elements of $\ttt[W_o]$ containing $e$, which means that the number of those elements is even.
\end{proof}

\begin{lem} \label{Ninv2}
\Fe\ \blk\ $K$ of \G, the restriction of $\sim_K$ to the \suf s of $\pii(K)$ is invariant under the action of $N(\RF)$ on~\ttt. 
\end{lem}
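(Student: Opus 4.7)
The plan is to exhibit, for each $g\in N(\RF)$, an automorphism $g^*$ of the dual graph $\ttt^*$ induced by the action of $g$ on \ttt\ and satisfying $g(\ttt[W_o]_K)=\ttt[W_o]_K$ for every relator $W\in\RF$; the invariance of $\sim_K$ will then follow immediately by comparing crossing counts $Cr(\ttt[W_o]_K,\cdot)$ along dual paths before and after applying $g^*$.

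First I would handle the formal bookkeeping. Since $N(\RF)$ is precisely the deck transformation group of the covering $\pi\colon\ttt\to\G$, every $g\in N(\RF)$ acts on \ttt\ as a label-preserving automorphism with $\pi\circ g=\pi$; in particular $g(\pii(K))=\pii(K)$. Because pre-blocks of \ttt\ are defined purely combinatorially from the labelling together with $\CF$ and $\mu$, $g$ permutes them, and in particular permutes those contained in $\pii(K)$. For each relator $W\in\RF$, the set $\ttt[W_o]$ consists of the double-ray lifts of $W_o^\infty$ through~$\pi$; since $g$ commutes with $\pi$, it permutes these lifts, giving $g(\ttt[W_o]_K)=\ttt[W_o]_K$.

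The hard part will be showing that $g$ induces a bijection on the faces of \ttt\ at all, because $\rho$ is not strictly $N(\RF)$-equivariant: the rotation systems at $v$ and at $gv$ can differ by a local reflection. What rescues the argument is the careful design of $\rho$ in \Sr{secrho}: property ($\rho$\ref{rholast}), strengthened by~\eqref{rholocal2}, ensures that for any $g\in N(\RF)$ the local spins at $v$ and $gv$ with respect to corresponding hinges agree \utr, and the recursive construction in \Sr{secrho} was set up to make these reflection choices consistent across each $N(\RF)$-orbit. My plan is to extract from this a combinatorial face-preservation statement: faces of $\rho(\ttt)$ are determined by the rotation systems, and the orbit-wise compatibility from ($\rho$\ref{rholast}) assembles into a bijection $g^*$ on faces with $g^*(e^*)=(ge)^*$ for every edge $e$ of \ttt, i.e.\ the desired automorphism of $\ttt^*$. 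This is the main obstacle, and I would expect to carry it out by induction on the construction of $\rho$, using the fact that at every step the new star was embedded with a spin forced (\utr) by the data already placed for $N(\RF)$-equivalent vertices.

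With $g^*$ in hand I would conclude as follows. Given super-faces $F\sim_K H$ of pre-blocks in $\pii(K)$, pick faces $F'\sub F$, $H'\sub H$ of \ttt\ together with a dual path $P_{F'H'}$ from $F'$ to $H'$. Then $gF, gH$ are super-faces of pre-blocks in $\pii(K)$ containing $gF',gH'$ respectively, and $g^*(P_{F'H'})$ is a dual path from $gF'$ to~$gH'$. Combining $g^*(e^*)=(ge)^*$ with $g(\ttt[W_o]_K)=\ttt[W_o]_K$ yields
\[
Cr(\ttt[W_o]_K,g^*(P_{F'H'}))=Cr(\ttt[W_o]_K,P_{F'H'})
\]
for each relator $W$, and the right-hand side is even by $F\sim_K H$. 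Hence $gF\sim_K gH$, and applying the same argument to $g^{-1}\in N(\RF)$ gives the converse implication.
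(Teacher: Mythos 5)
Your strategy rests on the claim that each $g\in N(\RF)$ induces an automorphism $g^*$ of the dual graph $\ttt^*$ with $g^*(e^*)=(ge)^*$; this is equivalent to asking that $\rho$ be $N(\RF)$-equivariant up to a global reflection on each orbit, so that $g$ carries the rotation system at $v$ to that at $gv$ and hence permutes faces. This is false in general, and the failure is precisely the phenomenon the lemma has to cope with. Property~($\rho$\ref{rholocal}) only constrains the local spins at $N(\RF)$-equivalent vertices with respect to hinges labelled $h\in\IF$ with $h\neq 1$; when $h\in\IF$ is a hinge with $h=1$, the two endvertices of an $h$-labelled edge of \ttt\ are $N(\RF)$-equivalent, yet the construction of $\rho$ gives one of them the spin $\phi(h)$ read from $\chi$ and the other the \emph{dual} spin obtained by composing with $\mu(h,\cdot)$ (Definition~\ref{defphi}). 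These two cyclic orders on $N_{\xl}(h)$ need not agree even up to reflection (take, e.g., three $B_i$'s adjacent to $h$ in $X$ with $\mu(h,\cdot)$ a nontrivial transposition). So $g$ does \emph{not} preserve the face structure at such a hinge, and no induction on the construction of $\rho$ can establish the face bijection you want: $\rho$ was deliberately built without that equivariance, since it cannot be had.

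The paper's proof never asserts any such equivariance. Instead it constructs a single explicit dual path $P_{FH}$, adapted to the decomposition of $Q=xwRw^{-1}$ into maximal \prbl\ segments $P_i,P'_i$, and proves evenness of $Cr(\ttt[W_o]_K,P_{FH})$ by pairing the contributions of $\Pi_i$ with $\Pi'_i$ and of $\sqcup_i$ with $\sqcup'_i$. For the well-behaved hinges (those with local spins agreeing up to reflection at the two ends) the pairing is the obvious one; for the bad hinges---exactly the involutions $b\in\IF$ with $b=1$---the pairwise cancellation fails, and the proof instead invokes \Lr{evenTb} to see that the number of double-rays in $\ttt[W_o]_K$ through such a hinge is even, so their total contribution to $Cr$ across $\sqcup_i$ is even anyway. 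Your plan has no mechanism to absorb this second kind of cancellation; it would have to be repaired by essentially reintroducing the case split and \Lr{evenTb}, at which point it collapses into the paper's argument.

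The preliminary bookkeeping in your first paragraph (that $N(\RF)$ is the deck group, that it permutes the \prbl s in $\pii(K)$, and that it permutes $\ttt[W_o]_K$) is correct, and is implicitly used in the paper as well.
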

\begin{proof}
We will adapt the proof of \cite[Lemma 4.4]{planarpresI}. Since $K$ is fixed, let us just write $\sim$ instead of $\sim_K$.

We need to prove that if $F,H$ are \suf s of $\pii(K)$ in the same orbit of $N(\RF)$, then $F\sim H$. Again, we may assume that there are vertices $x,y$ in the boundaries of $F,H$ respectively, such that $y= x w R w^{-1}$ for some word $w$ and some relator $R\in \RF$: by the definition of the normal closure $N(\RF)$, if we can prove $F\sim H$ in this case, we can prove $F\sim H$ for every two  $F,H$ in the same orbit of $N(\RF)$.

Let $\alpha_{FH}$ be the automorphism of \ttt\ mapping $x$ to~$y$. 

Decompose the path $Q:=x w R w^{-1}$ into (inclusion-)maximal subpaths contained in a \prbl. Then we can write$$Q= P_1 \cup P_2 \cup \ldots \cup P_k(=P'_k) \cup P'_{k-1} \cup \ldots \cup P'_1,$$ where the $P_i, P'_i$ are those maximal subpaths, $P'_i$ is $N(\RF)$-equivalent to $P_i$ \fe\ $i<k$, and $P_k$ contains the subpath of $Q$ induced by $R$ (such a $P_k$ exists because every relator $R$ is blocked). Note that the intersection of any two subsequent $P_i$ or $P'_i$ is either a hinge separating the corresponding \prbl s, or a single vertex incident with such a hinge.

Since we are free to choose any $F$--$H$ walk $P_{FH}$ in $\ttt^*$ to decide whether $F\sim H$, we will choose a convenient one, which we construct now.

Recall that every $P_i, i>1$ starts and ends at hinges, which we will call $h_{i-1}, h_i$, separating its \prbl\ from the \prbl s containing $P_{i-1},P_{i+1}$ respectively; here $h_{i-1}, h_i$ may or may not be contained in $P_i$ as end-edges. 

Let $C_i$ be the \prbl\ containing $P_i$ and let $C'_i$ be the \prbl\ containing~$P'_i$.

Let $\Pi_i, k>i>1$, be an (inclusion-)minimal path in $\ttt^*$ joining a \suf\  incident with $h_{i-1}$  to a \suf\ incident with $h_{i}$ ---where we say that a \suf\ $F$ is \emph{incident} with an edge if the boundary of $F$ contains that edge---   
\st\ all vertices of $\Pi_i$ are faces sharing a vertex with $P_i$, and $\Pi_i$ does not intersect $P_i$ (at a midpoint of any edge); see \fig{PFH}. Define $\Pi'_i$ similarly using $P'_i$ instead of $P_i$. Note that there are exactly two such paths $\Pi_i$ to choose from, one on either side of~$P_i$; it doesn't matter much which of the two we will choose, but let us make `the same' choice for both $\Pi_i$ and $\Pi'_i$; more precisely, we ensure that
\labtequ{zeta}{$\Pi_i$ crosses an edge $e$ of $C_i$ (incident with $P_i$) \iff\ $\Pi'_i$ crosses the edge $\alpha_{FH}(e)$ of $C'_i$.}
%
%
This is possible because $\rho$ embeds $C_i$ the same way as $C'_i$ \utr, and $\Pi_i$ is uniquely determined once we choose which of the two \suf s of $C_i$ incident with $h_i$ we want it to contain; by choosing $\Pi'_i$ to contain the corresponding \suf\ incident with $h'_i$, our claim is satisfied. 
Note that $\Pi_i$ does not cross $h_i$, because if it did we could shorten it. 

For $i=1$ we let $\Pi_1$ be a minimal path in $\ttt^*$ joining $F$ to a \suf\ incident with $h_{1}$, 
and otherwise be defined similarly to $\Pi_i, k>i>1$. Define $\Pi'_1$ similarly. 
Finally, let $\Pi_k=\Pi'_k$ be a minimal path in $\ttt^*$ joining a \suf\ incident with $h_{k-1}$ to a \suf\ incident with $\alpha_{FH}(h_{k-1})$ without crossing~$P_k$. 


Let $\sqcup_i, k> i\geq 1$ be a path in $\ttt^*$ joining the last vertex of $\Pi_i$ to the first vertex of $\Pi_{i+1}$ \st\ all vertices of $\sqcup_i$ are faces sharing a vertex with $P_i \cap P_{i+1}$, and define $\sqcup'_i$ similarly for $\Pi'_i$, $\Pi'_{i+1}$; there are several choices for this $\sqcup_i$, so let us make it uniquely determined: if $P_i \cap P_{i+1}$ is a single vertex, then there are two candidates, and we always choose the one crossing $h_i$.
If $P_i \cap P_{i+1}$ is the hinge $h_i$, then there are up to four choices, and we choose the one that crosses $h_i$ and is contained in the two \suf s of $C_i$ incident with  $h_i$ and in the two \suf s of $C_{i+1}$ incident with  $h_i$.

It follows from the choice of $\sqcup_i$ that it behaves well \wrt\ elements of~$\CF$:
\labtequ{beta}{If $\sqcup_i$ meets an edge  in $B_i(v)\sm \{h_i\}$ (where $B_i\in \CF$) where the vertex $v$ is incident with $h_i$, then  $\sqcup_i$ meets every edge of $B_i(v)$.}

A similar but slightly stronger is true for $\Pi_i$:
\labtequ{gamma}{If $\Pi_i$ meets an edge lying inside some \suf\ of $C_i$, then $\Pi_i$ visits all faces incident with $P_i$ inside that \suf.
}
Indeed, 
$\Pi_i$ is by definition a minimal path joining certain \suf s of $C_i$; therefore, it crosses any \suf\ either completely or at a single boundary edge.


Finally, we obtain $P_{FH}$ by concatenating all the $\Pi_i,\sqcup_i,\Pi'_i $ and $\sqcup'_i$: 
$$P_{FH}:= \Pi_1 \cup \sqcup_1 \cup \Pi_2 \ldots \cup \sqcup_{k-1} \cup \Pi_k(=\Pi'_k) \cup \sqcup'_{k-1} \ldots \cup \sqcup'_1 \cup \Pi'_1.$$

   \begin{figure}[htbp]
   \centering
   \noindent
\includegraphics[width=.95\textwidth]{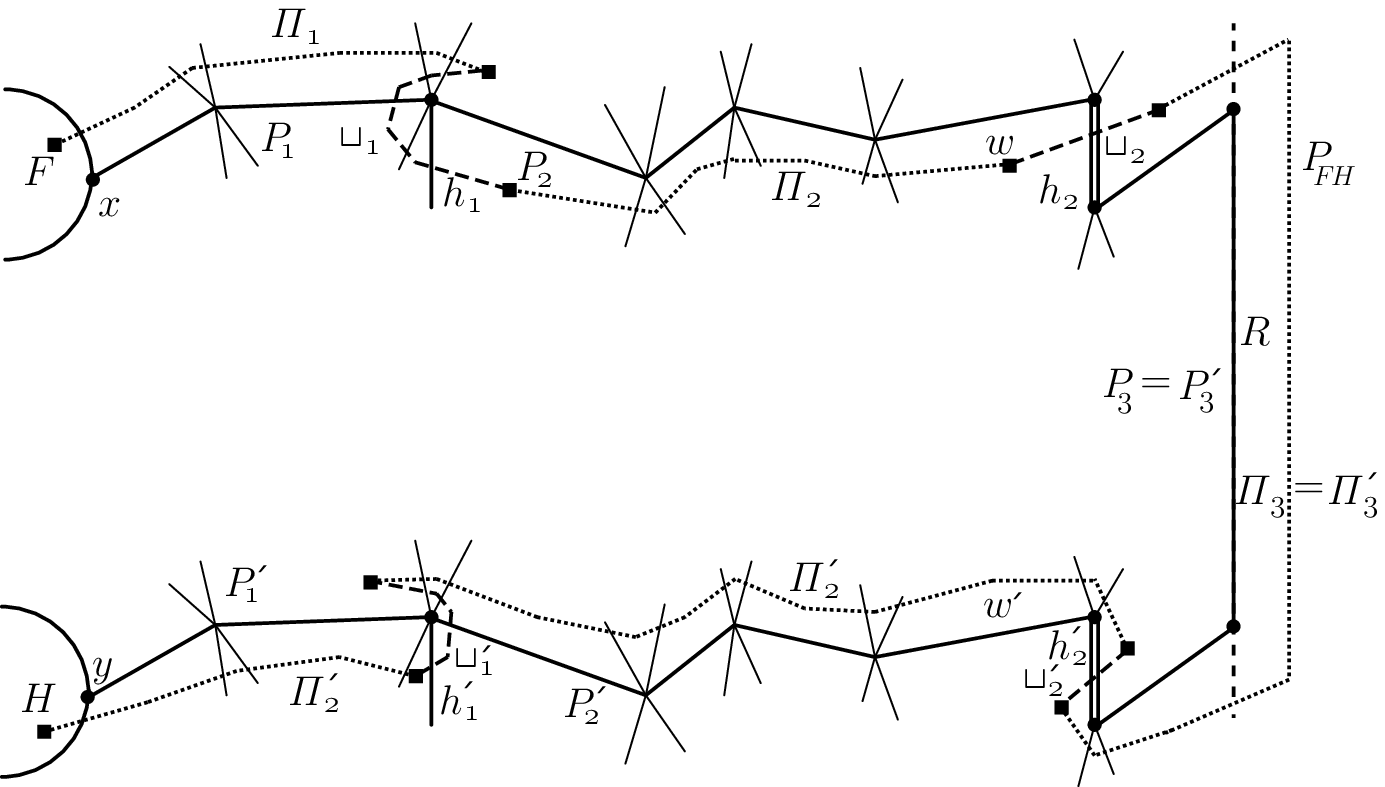}
   \caption{\small The path $P_{FH}$ (dashed) in the proof of \Lr{Ninv2} with the paths $\Pi_i$, $\Pi_i'$, $\sqcup_i$, $\sqcup_i'$.}
   \label{PFH}
   \end{figure}

We need to check that $Cr(\ttt[W_o]_K,P_{FH})$ is even. We will do so by showing that the contributions of the $\Pi_i$ to $Cr(\ttt[W_o]_K,P_{FH})$ cancel with those of the $ \Pi'_i$, 
and the contributions of the $\sqcup_i$ cancel with those of the $\sqcup'_i$.

Let $T$ be an element of $\ttt[W_o]_K$ with odd $cr(T,P_{FH})$, i.\,e.\ with an odd number of crossings of $T$ by $P_{FH}$; only such $T$ matter. 
Let $T':= \alpha_{FH}(T)$.

Let us first consider the total number of crossings of such $T$ by the subpaths $\Pi_i, \Pi'_i, i<k$, of $P_{FH}$.

If $T$ is contained in $C_i$, 
then $cr(T,\Pi_i)= cr(T',\Pi'_i)$ by \eqref{zeta}. 

{If $T$ is not contained in $C_i$, then $\Pi_i$ crosses $T$ an even number of times (0 or 2):}
this is easy to see when $T\cap P_i$ is a single vertex $v$ by applying \eqref{gamma} to that vertex. The situation is slightly subtler when $T\cap P_i$ is a hinge $g$ ---no other option is possible as distinct \prbl s intersect at an edge at most by construction. In this case, we remark that the \prbl\ $D$ containing $T$ lies in some \suf\ of $C_i$ by the construction of $\rho$, and again $\Pi_i$ must cross all faces incident with~$g$ inside that \suf\ by \eqref{gamma}, therefore crossing both edges of $T$ incident with~$g$.

Finally, it is not hard to see that $\Pi_k=\Pi'_k$ has an even contribution to $Cr(\ttt[W_o]_K,P_{FH})$.

These facts combined show that $\sum_{T\in \ttt[W_o]_K} cr(T, \bigcup_i \Pi_i)$ is even.

\medskip
Next, we consider the total number of crossings of such $T$ by the subpaths $\sqcup_i,\sqcup'_i$.
Suppose $cr(T,\sqcup_i)$ is odd. Then it must equal 1 as $\sqcup_i$ is too short to cross a double-ray three times, where we used  property ($\rho$\ref{rhohin}) of  our embedding $\rho$ that \prbl s do not cross each other. 

Let $v_i$ be the last vertex of $P_i$ and $v'_i$ the last vertex of $P'_i$.
If the local spin at $v_i$ \wrt\ $h_i$ coincides \utr\ with  the local spin at $v'_i$ \wrt\ $h'_i$, then $cr(T,\sqcup_i)= cr(T',\sqcup'_i)$ (here, local spin refers to \xl\ rather than $X$; recall \eqref{rholocal2}). Therefore, the total contribution of the pair $T,T'$ to 
$Cr(\ttt[W_o]_K,P_{FH})$ is even and can be ignored.

If those local spins do not coincide \utr, then by the choice of~$\rho$ ($\rho$\ref{rholocal}), the label of $h_i$ is an involution $b\in\IF$ with $b=1$. In this case however, \Lr{evenTb} applies, yielding that the set $H$ of elements of $\ttt[W_o]_K$ containing $h_i$ is even. We claim that $T\in H$ (i.e.\ $h_i \subset T$): this follows from $cr(T,\sqcup_i)=1$, the fact that $\sqcup_i$ only contains faces of \ttt\ incident with $h_i$ by its construction, and \eqref{beta}. Moreover, \eqref{beta} also implies that $cr(R,\sqcup_i)=1$ \fe\ other $R\in H$. But as $|H|$ is even, the total contributions $\sum_{R\in H} cr(R,\sqcup_i)$ of its elements are even and can be ignored as well.

Summing up, we proved that both
\[
\sum_{T\in \ttt[W_o]_K} cr(T, \bigcup_i \Pi_i)\quad\text{ and }\quad\sum_{T\in \ttt[W_o]_K} cr(T, \bigcup_i \sqcup_i)
\]
are even. Therefore $Cr(\ttt[W_o]_K,P_{FH})$ is even as well, since it is the sum of those two sums by definition.
\end{proof}

\subsection{From the planarity of blocks to the planarity of \G}

The main aim of this section is to prove

\begin{lem}\label{hinsep}
Every hinge of \g separates its incident blocks. 
\end{lem}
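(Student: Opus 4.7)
My plan is to argue by contradiction using the universal cover $\ttt$. Let $e=xy$ be a hinge of $G$ labelled by $h\in\SF$, and suppose $K_1\neq K_2$ are two distinct blocks of $G$ both incident with $e$; the assertion is trivial if only one block is incident with $e$. Since $h$ is a hinge of the spin structure, $N(h)\geq 2$, so after lifting $e$ to $\tilde e=\tilde x\tilde y$ in $\ttt$ the set $Sep_{\tilde e}$ has at least two elements.

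First, using the $N(\RF)$-action on $\ttt$ to translate suitable pre-blocks, I would find pre-blocks $A_1,A_2$ both containing $\tilde e$ with $\pi([A_1])=K_1$ and $\pi([A_2])=K_2$. A short combinatorial argument shows that two distinct pre-blocks sharing $\tilde e$ must lie in distinct elements of $Sep_{\tilde e}$: otherwise their union would still be a subtree of $\ttt$ not separated by any $Sep_b$, strictly containing each of $A_1,A_2$ and contradicting maximality. Hence I may assume $A_1\subset T_1$ and $A_2\subset T_2$ for distinct $T_1,T_2\in Sep_{\tilde e}$.

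Next, suppose for contradiction that there is a path $P$ in $G\setminus\{x,y\}$ from $v\in K_1\setminus\{x,y\}$ to $w\in K_2\setminus\{x,y\}$. Pick a lift $\tilde v\in A_1$ of $v$ (which exists because $v\in\pi([A_1])$) and lift $P$ to a walk $\tilde P$ in $\ttt$ starting at $\tilde v$. Because $\tilde P$ avoids $\pi^{-1}(\{x,y\})\supseteq\{\tilde x,\tilde y\}$ and the only vertices of $T_1$ connecting it to other elements of $Sep_{\tilde e}$ are $\tilde x$ and $\tilde y$, the walk $\tilde P$ remains in $T_1\setminus\{\tilde x,\tilde y\}$ and ends at some $\tilde w$ with $\pi(\tilde w)=w$.

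The crux is to derive a contradiction from $\tilde w\in T_1\setminus\{\tilde x,\tilde y\}$ together with $w\in K_2=\pi([A_2])$. Since $\tilde w\in[A_2]$ and the $N(\RF)$-action on $\ttt$ is free, there is a unique $\alpha\in N(\RF)$ with $\alpha\tilde w\in A_2\subset T_2$. I would derive the contradiction from the blocked property of relators (condition~(P\ref{gplprz}) of Definition~\ref{defgplpr}): writing $\alpha$ as a product of conjugates of relators and tracking the corresponding walks in $\ttt$, the blocked condition forces each factor to respect the $Sep_{\tilde e}$-decomposition at every lift of $e$ it encounters. This should make it impossible to transport a vertex of $T_1\setminus\{\tilde x,\tilde y\}$ into $T_2\setminus\{\tilde x,\tilde y\}$ unless the edge $\tilde e$ itself is moved, which is incompatible with $\tilde e$ being a common edge of $A_1$ and $\alpha^{-1}(A_2)$. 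The main obstacle is formalising this last step: the blocked condition is local (about consecutive letters of a single relator), and turning it into a global constraint on the action of $N(\RF)$ on $\ttt$ will require an induction on the number of conjugate-relator factors in $\alpha$, combined with careful bookkeeping of how each factor moves the $Sep$-decomposition at the hinges it crosses.
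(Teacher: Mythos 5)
Your proposal has a genuine gap, and you yourself flag it: the entire argument hinges on the last paragraph, where you assert that ``the blocked condition forces each factor to respect the $Sep_{\tilde e}$-decomposition'' and that this ``should make it impossible to transport a vertex of $T_1\setminus\{\tilde x,\tilde y\}$ into $T_2\setminus\{\tilde x,\tilde y\}$,'' but you do not actually carry out the induction you describe. That is precisely the content of the lemma, so without it nothing has been proved. There is also a smaller slip: freeness of the $N(\RF)$-action gives at most one $\alpha$ sending $\tilde w$ to a \emph{given} vertex, but it does not make the $\alpha$ with $\alpha\tilde w\in A_2$ unique, since $A_2$ may contain several preimages of $w$; and the phrase ``$\tilde e$ being a common edge of $A_1$ and $\alpha^{-1}(A_2)$'' is not justified, since $\alpha^{-1}(A_2)$ contains $\alpha^{-1}(\tilde e)$, not $\tilde e$, unless $\alpha$ happens to fix the hinge.

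The paper closes exactly this gap by a reformulation that you would do well to internalise. Instead of trying to control how a single element $\alpha\in N(\RF)$ moves a vertex across the $Sep$-decomposition, restate the lemma as a parity assertion: every cycle $C$ of $G$ crosses each hinge $b$ an even number of times (a ``crossing'' being a subpath of $C$ that $b$ separates into two nonempty pieces lying in distinct blocks). Then take a ``proof'' of $C$, i.e.\ a decomposition $P=w_1R_1w_1^{-1}\cdots w_kR_kw_k^{-1}$ with each $R_i\in\RF$, and count crossings of $b$ along $P$: none occur when passing between consecutive conjugate factors (the junction point is always $c_0$, chosen off $b$); none occur inside any $R_i$, because relators are blocked --- this is where (P\ref{gplprz}) enters, and it enters \emph{per relator, locally}, exactly the form in which the hypothesis is given; and crossings inside $w_i$ pair off with crossings inside $w_i^{-1}$. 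The total is therefore even, and since $C$ and $P$ represent the same class, $C$ crosses $b$ evenly as well. This sidesteps the ``local-to-global'' induction you were dreading: the parity bookkeeping distributes the blocked condition over the factors automatically, with no need to track how each factor acts on $Sep_{\tilde e}$.
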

\begin{proof}
The statement is equivalent to the statement that every cycle of \g \emph{crosses} each hinge $b$ an even number of times, where the number of crosses of $b$ by $C$ is the maximum number of edge disjoint subpaths $P_i$ of $C$ such that $b$ separates each $P_i$ into two (possibly trivial, but non-empty) subpaths that lie in distinct blocks.

To prove the latter, let $C= c_0 c_1 \ldots c_k$ with $c_k=c_0$ be a cycle, and let $L=t_0 t_1 \ldots t_k$ be a lift of $C$ to \ttt\ via $\pii$.
Fix a hinge $b$. We may assume without loss of generality that $c_0$ is not a vertex of $b$.
Let $P=w_1R_1 w^{-1}_1 \ldots w_k R_k w^{-1}_k$ be a proof of~$C$ in our presentation.

Since $c_0\not\in b$ and since the end vertices of $w_iR_iw_i\inv$ are $N(\RF)$-equivalent to~$c_0$, any crossings of $b$ by $P$ occur inside the subpaths $w_i R_i w^{-1}_i$ and not when switching from $w_{i-1} $ to $w_i$.  We have no crossings of $b$ inside any $R_i$ because our relators are blocked. Moreover, any crossings of $b$ inside a $w_i$ are paired up by crossings of $b$ inside $w^{-1}_i$.
Thus the number of crossings of $b$ by $P$, and hence by~$C$, is even.
\end{proof}

This, combined with the planarity of blocks we proved in the previous section, easily implies the planarity of~\G:

\begin{thm} \label{Gplanar}
Let \g be the \Cg\ of a \gcplpr. Then \g is planar.
\end{thm}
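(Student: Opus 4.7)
The plan is to combine the two lemmas just established: \Lr{clplanar} gives a planar embedding of every individual block of $G$ respecting the spin data $\sigma,\tau$, while \Lr{hinsep} tells us that hinges separate their incident blocks. It remains to glue the block embeddings into a global planar embedding of $G$.

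First I would make precise the block-cut tree structure of $G$. Let $\BF(G)$ be the set of blocks of $G$ (as defined in \Sr{placlu}) and consider the bipartite graph $Y$ whose vertex set is $\BF(G)\cup H(G)$, where $H(G)$ is the set of (vertex-)endpoints of hinges that lie in more than one block, with an edge joining a block $K$ to a vertex $v\in H(G)$ whenever $v\in V(K)$. Using \Lr{hinsep} together with the fact that the image under $\pi$ of any walk in $\ttt$ crossing between blocks must traverse a hinge, one checks that $Y$ is a tree: a cycle in $Y$ would lift to a closed walk of $G$ that crosses some hinge an odd number of times, contradicting \Lr{hinsep}. Moreover, at any vertex $v$ of $G$ the blocks through $v$ are indexed by the sets $B_i\in\CF$ containing the labels of edges of $v$, consistently with the spin structure.

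Next I would embed $G$ by recursion along $Y$. Pick a root block $K_0$ and embed it in $\R^2$ using \Lr{clplanar}. Since the embedding respects $\sigma$, around every vertex $v\in V(K_0)$ the incident edges appear in a cyclic order compatible with some $\sigma(i)$, and the edges whose labels lie in a common $B_i\in\CF$ appear consecutively. In particular, for any hinge $h$ at $v$ whose label lies in $B_i$, there is an open disk-sector around $v$ lying outside $K_0$ into which we may insert any further block meeting $K_0$ at $h$ along the $B_j=\mu(h,i)$-side. Proceeding outwards along $Y$, at each step we embed a new block $K$ meeting the currently embedded subgraph $G'$ exactly in one hinge $h=uw$ (by \Lr{hinsep}), using the planar embedding of $K$ guaranteed by \Lr{clplanar}, rescaled to lie inside a suitable disk-sector at $u$ and $w$ attached to the correct $B_j$-side on both endvertices. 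Compatibility of the two sides is exactly the information encoded by $\mu$, and the orientations to be used are determined by $\tau$; property ($\rho$\ref{rhotau}) of the embedding $\rho$ of $\ttt$ together with the consistency of block embeddings with $\sigma,\tau$ guarantees these choices are globally coherent.

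For the infinite case one must avoid accumulation of vertices; the disk-sectors into which successive blocks are inserted can be chosen with diameters tending to zero along any ray of $Y$, which ensures the union is a well-defined embedding of $G$ in $\R^2$ as a topological $1$-complex. Since distinct blocks meet only at hinges of $G$ and each block is embedded disjointly from the rest inside its prescribed disk, the resulting embedding has no crossings, proving that $G$ is planar.

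The main obstacle I expect is verifying that the local cyclic arrangement of blocks around a single vertex of $G$ is indeed consistent: a vertex may lie in many blocks, meeting many hinges labelled by the same or different generators, and one must check that the order in which blocks are inserted around the vertex, as dictated by $\sigma$ restricted to the various $B_i$ through that vertex, agrees with the individual block embeddings supplied by \Lr{clplanar}. This reduces to the observation that \Lr{clplanar} yields not merely a planar embedding of each block but one respecting $\sigma$, $\tau$ and $\mu$, so that the $B_i$-data at a vertex of $G$ agrees with the $B_i$-data at the corresponding vertex in each block containing it; this is what the embedding $\rho$ of $\ttt$ built in \Sr{secrho} was designed to encode, and in particular conditions ($\rho$\ref{rhosig})--($\rho$\ref{rhohin}) are what make the local gluing unambiguous.
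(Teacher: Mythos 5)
Your proposal takes the same core approach as the paper: combine \Lr{clplanar} (each block is planar) with \Lr{hinsep} (hinges separate incident blocks, giving a tree-like block structure) and glue the block embeddings one at a time. The paper's proof is just a two-sentence sketch of this gluing, whereas you fill in more detail, so the comparison is worth a brief remark.

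The one place where you do more work than is actually needed is the ``main obstacle'' you identify at the end: you worry about making the \emph{global} embedding respect $\sigma$, $\tau$ and $\mu$ consistently at every vertex, and you invoke $\mu$ and ($\rho$\ref{rhotau}) to coordinate the orientation of each newly inserted block. But the theorem only asserts planarity, not existence of a consistent embedding of all of $G$. For bare planarity the paper's proof exploits the freedom more aggressively: at each step the new block $B$ shares only the hinge $b$ with what is already embedded (by \Lr{hinsep}), and you may insert $B$ into \emph{either} of the two faces whose boundary contains $b$, reflecting $B$ if you like, with no obligation to match spins. This makes the consistency worry evaporate. Your version would in fact be the right route if one wanted to prove the stronger claim that $G$ admits an embedding respecting the spin data blockwise, but that is not what \Tr{Gplanar} states. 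Your observation that for infinite $G$ one must shrink the attachment sectors along rays to avoid accumulation points is a genuine detail the paper elides, and it is handled correctly.

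So: same decomposition, same two lemmas, a correct argument, but you add a layer of spin-bookkeeping that is not needed for the conclusion as stated.
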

\begin{proof}
Combining \Lr{clplanar} with \Lr{hinsep} easily yields that \g is planar. Indeed, 
we can embed \g one block at a time: since incident blocks share a hinge only by \Lr{hinsep}, if we have already embedded a block $A$ meeting a block $B$ at a hinge $b$, then it is easy to embed $B$ inside one of the two faces (we are free to choose) of the current embedding whose boundary contains $b$.
\end{proof}

\section{Every planar \Cg\ admits a \gcplpr} \label{gplprconv}
In this section we prove the converse of \Tr{Gplanar}, namely that every planar \Cg\ admits a \gcplpr.

We start by showing that every planar \Cg\ of connectivity~1 can be extended into a 2-connected one using redundant generators; see \Lr{cg1con} below. 
We then show that every 2-connected planar \Cg\ admits a \gcplpr\ in \Sr{sec_Con2}. 

\subsection{Planar \Cg s of connectivity 1}\label{sec_Con1}


\begin{lem}\label{cg1con}
Every planar, locally finite, \Cg\ of connectivity 1 can be extended into a planar $2$-connected, locally finite, \Cg\ by adding redundant generators.
\end{lem}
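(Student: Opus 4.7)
The plan is to fix a planar embedding $\sigma$ of $\G$, analyse the face structure around the identity $e$ (which is a cut vertex by vertex-transitivity, since $\kappa(G)=1$), and adjoin one redundant generator for each ``gap'' between consecutive blocks meeting $e$. A standard planarity argument shows that in $\sigma$ the edges at $e$ fall into contiguous cyclic arcs $A_1,\ldots,A_k$ grouped by block membership, with a face $F_i$ of $\sigma$ lying between each consecutive pair $A_i,A_{i+1}$ and having $e$ on its boundary. Letting $s_i\in\SF\cup\SF\inv$ be the label of the last edge of $A_i$ and $s_i'$ that of the first edge of $A_{i+1}$ (both directed out of $e$), I adjoin the words $t_i:=s_i\inv s_i'$ as new, redundant generators to obtain a Cayley graph $\G'$ of the same group $\Gam$; local finiteness is preserved because $k$ is finite.

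For $2$-connectivity, note that the $t_i$-labelled edge based at $s_i$ runs to $s_i\cdot t_i=s_i'$, so it directly joins the neighbours of $e$ in consecutive blocks without passing through $e$. Ranging $i$ over $1,\ldots,k$ yields a cycle in $\G'-e$ through all neighbours of $e$, so $e$ is no longer a cut vertex of $\G'$; by vertex-transitivity no vertex of $\G'$ is a cut vertex, hence $\G'$ is $2$-connected.

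For planarity, I first replace $\sigma$ by a planar embedding $\sigma^\ast$ of $G$ in which the cyclic ordering of edge-labels at every vertex agrees with that at $e$ up to reflection. Such a $\sigma^\ast$ exists because in a $1$-connected planar graph one is free at every cut vertex to flip or permute the local embedding of each incident block, and by vertex-transitivity the local cyclic orderings on each block-orbit are already determined up to reflection; one then proceeds inductively along the block-cut tree, correcting the orderings blockwise. With $\sigma^\ast$ in hand, the $\Gam$-orbit of each new edge can be drawn locally: at each vertex $g$ the $t_i$-edge goes from $g$ to $gs_i\inv s_i'$ and is drawn inside the face of $\sigma^\ast$ lying between the $s_i$- and $s_i'$-edges at the bypassed cut vertex $gs_i\inv$. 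Because these local faces are pairwise disjoint and each new edge stays inside its own face, no crossings are introduced and $\G'$ inherits a planar embedding.

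The main obstacle is the construction of $\sigma^\ast$: one must exercise the flipping and permutation freedom at infinitely many cut vertices simultaneously in a compatible way. I expect this to follow from vertex-transitivity together with the tree-like structure of the block-cut decomposition, which allows inconsistencies to be corrected along the tree without ever having to undo a previous correction.
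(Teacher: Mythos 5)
Your plan genuinely diverges from the paper's. The paper inducts on the number of blocks at $o$, adding \emph{one} redundant generator at a time and proving planarity of the augmented graph via a self-contained observation~\eqref{attach}: a graph obtained by recursively gluing planar pieces at single vertices, adding at most one extra edge per gluing, is planar. This observation never requires any global control over the embedding of $G$; each new block is re-embedded freshly inside the face where it is to be attached. You instead add all gap-closing generators $t_1,\ldots,t_k$ at once and first construct a ``spin-homogeneous'' embedding $\sigma^\ast$ of $G$, then draw the new edges in the appropriate face corners. That is a legitimately different route, but it is both more demanding and, as written, has two genuine gaps.

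First, the existence of $\sigma^\ast$ is the load-bearing step and you leave it unproved (you acknowledge this yourself). It is not obvious, and establishing it would in effect be proving a substantive structural statement about embeddings of $1$-connected planar Cayley graphs; the paper's proof deliberately sidesteps any need for such a statement.

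Second, the assertion that ``these local faces are pairwise disjoint'' is simply false in general: if $G$ is a tree (e.g.\ $Cay\langle a,b,c\mid a^2,b^2,c^2\rangle$), there is only one face, so \emph{all} the $F_i(v)$ coincide. Planarity of $G'$ would then have to come from the new chords being non-interleaving on that common face boundary, not from disjointness of faces. Worse, this non-interleaving can actually fail. If $s_i$ is an involution, then the cut vertex $g s_i^{-1}$ attached to the $t_i$-edge based at $g=e$ equals $es_i$, while the $t_i$-edge based at $es_i$ is drawn near $e$; if $s_i$ turns out to be spin-reversing in $\sigma^\ast$, the two face corners at $e$ and at $es_i$ between the $s_i$- and $s_i'$-edges lie on the \emph{same} side of the $s_i$-edge, the four endpoints $e$, $es_i$, $es_i'$, $es_is_i'$ interleave on that face boundary, and the two $t_i$-edges must cross. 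So for your argument to go through you would additionally have to ensure $\sigma^\ast$ makes every involutive $s_i$ spin-preserving, another unaddressed constraint. This is precisely the difficulty the paper flags (``If however $b^2$, say, is a relator, then it is a bit harder to avoid that the two $x$ edges emanating out of $o$ and $ob$ cross''), and its observation~\eqref{attach} is designed exactly to circumvent it: by adding one generator at a time and rebuilding the embedding block by block one never fixes a global embedding of $G$ in which such a forced crossing could arise.
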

\begin{proof}
We proceed by induction on the number of blocks incident with the vertex~$o$, where a \emph{block} means a maximal 2-connected subgraph in this subsection. Pick two such blocks $B,C$, an edge from $B$ corresponding to some generator $b$, and  an edge from $C$ corresponding to some generator $c$. Introduce a new redundant generator $x$ and the relation $x= b^{-1}c$.  Clearly, the resulting \Cg\ $G'$ obtained from the original \Cg\ $G$ by adding the generator $x$ has less blocks incident with~$o$ than~$G$.  

We claim that $G'$ is still planar. If none of $b^2$ or $c^2$ is a relator, then this is an easy exercise, based on the observation that $G$ can be embedded in such a way that for every vertex $v$, the edges labelled $b$ and $c$ emanating from $v$ lie in a common face boundary. 

If however $b^2$, say, is a relator, then it is a bit harder to avoid that the two $x$ edges emanating out of $o$ and $ob$ cross in our embedding. Still, the following observation will help us embed $G'$ in this case (and it is also applicable to the case where none of $b^2$ or $c^2$ is a relator). A good example to bear in mind throughout the rest of the proof is where $G$ is the \Cg\ $Cay\left<b,c\mid b^2,c^2\right>$ of the free product of two copies of $\Z/2\Z$, and $x=bc$.

\labtequ{attach}{Let $H_0$ be the graph consisting of a single vertex, and suppose that for every $i\in \N$, the graph $H_i$ is obtained from $H_{i-1}$ by attaching a planar graph $P_i$ to $H_{i-1}$ by identifying some vertex $p_i\in V(P_i)$ with some vertex $h_i\in V(H_{i-1})$, and possibly joining a neighbour $p'_i$ of $p_i$ to a neighbour $h'_i$ of $h_i$ with an edge. Then $\bigcup_{i\geq 0} H_i$ is planar.}

To prove this, we first use induction to show that $H_i$ is planar: given an embedding of $H_{i-1}$, observe that $p'_i,p_i$ lie in a common face $F_i$ since they are neighbours. Likewise, $h'_i, h_i$ lie in a common face of $P_i$, and we may assume that that face is the outer face by embedding $P_i$ appropriately. We now embed $H_i$ by drawing $P_i$ inside $F_i$ and, if there is a $h'_i-p'_i$ edge in $H_i$, joining $h'_i$ to $p'_i$ with an arc in $F_i$ that avoids the rest of the graph. 

The fact that $\bigcup_{i\geq 0} H_i$ is planar now follows from a standard compactness argument. 

\medskip
To complete our proof, we will show that our $G'$ can be constructed as described in \eqref{attach}. 

Indeed, let $\HF$ be the set of blocks (i.e.\ maximal 2-connected subgraphs) of~$G$, and let $H_1,H_2,\ldots$ be an enumeration of $\HF$ such that for $i>1$, $H_i$ is incident with some $H_j$ for $j<i$. Then $G'$ has the claimed structure, with the $x$-edges playing the role of the $h'_i-p'_i$ edges.
%
%
\end{proof}



\subsection{\Cg s of connectivity 2}\label{sec_Con2}

In this section, we will complete the proof of our main theorem by showing that every locally finite $2$-connected planar \Cg s admits a \gcplpr.

A \emph{cut} in a graph~$G$ is a set of vertices $C$ spanning a connected subgraph of~\G, \st\ the \emph{boundary}
\[
\partial C:=\{x\in V(G)\sm C\mid x\text{ has a neighbour in }C\}
\]
of $C$ is finite and $C\cup\rand C\neq V(G)$. The \emph{order} of $C$ is the cardinality of $\partial C$.

We call two cuts $C,D$ \emph{nested} if, setting $C^*:=V(G)\sm C$ and $D^*:=V(G)\sm D$, one of the four relations holds:
\[
C\sub D,\quad C\sub D^*,\quad C^*\sub D,\quad C^*\sub D^*.
\]
We call a set of cuts \emph{nested}, if every two of its elements are nested.

\begin{defi} \label{defblock}
Given a nested set $\CF$ of cuts, a \emph{block} is a maximal subgraph $H$ such that for every cut $C$, we have either $V(H)\sub C\cup \partial C$ or $V(H)\sub C^*$ but not both.
\end{defi}

To obtain a \emph{torso} of a block $H$ from~$H$ we add all edges $xy$ such that $\{x,y\}\sub V(H)$ is a boundary of a cut in~$\CF$.

Tutte~\cite{Tutte} showed that every finite $2$-connected graph $G$ has an $\Aut(G)$-invariant nested set~$\CF$ of cuts of order~$2$ whose torsos are either $3$-connected or cycles.
This theorem also holds for locally finite graphs, see Droms et al.~\cite{DSS-2Con}.
Nevertheless, we will refer to it as \emph{Tutte's theorem}.
To each such nested set of cuts, there is an associated tree~$T$ that admits a bijection from $V(T)$ to the blocks and boundaries of cuts in~$\CF$ such that, for any $t_1,t_2\in V(T)$ and any $t$ on the unique $t_1$--$t_2$ path in~$T$, the image of~$t$ separates the images of~$t_1$ and~$t_2$.\footnote{Readers that are familiar with tree-decompositions of graphs might notice that this just says that for every nested set of cuts, we find a tree-decomposition of the graph whose parts are the blocks and boundaries of cuts.}
We call this tree~$T$ the \emph{decomposition tree} of the set of cuts.

A \emph{$2$-separator} is the boundary of a cut of order~$2$.
Lemma~\ref{cg2con} allows us to assume that all 2-separators of $G$ are joined by an edge, i.e.\ they are hinges in the sense of Section~\ref{secPlprGen}.
Given two \Cg s $G,H$, we call $G$ a \emph{Tietze-supergraph} of~$H$ if there are presentations $\left<\SF_G\mid \RF_G\right>$ of~$\Gamma(G)$ and $\left<\SF_H\mid \RF_H\right>$ of~$\Gamma(H)$ with $G=Cay\left<\SF_G\mid\RF_G\right>$ and $H=Cay\left<\SF_H\mid\RF_H\right>$ and with $\SF_G\supseteq\SF_H$ and $\RF_G\supseteq\RF_H$.

\begin{lem}\label{cg2con}
Every planar $2$-connected \Cg\ $G$ has a planar Tietze-super\-graph~$H$ in which every pair of vertices that separates $H$ is connected by an edge. In addition, the new edges are labelled by a new redundant generator. (Moreover, if \G\ is locally finite, then so is $H$.)
\end{lem}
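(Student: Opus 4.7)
The plan is to apply Tutte's theorem to obtain a $\Gamma(G)$-invariant nested family $\CF$ of $2$-cuts of $G$ whose torsos are $3$-connected or cycles, and then to adjoin one new redundant generator per $\Gamma(G)$-orbit of separating pairs in $\CF$ that are not already joined by an edge of $G$. The main obstacle will be verifying planarity of the resulting Cayley graph; the Tietze-supergraph property, local finiteness, and the fact that every separating pair becomes an edge will follow from the construction and from standard properties of Tutte's decomposition.

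For the construction, pick a system of orbit representatives $(x_1,y_1),(x_2,y_2),\ldots$ under $\Gamma(G)$ of ordered pairs $(x,y)$ such that $\{x,y\}$ bounds a cut in $\CF$ and $xy\notin E(G)$. For each $i$, fix a word $w_i$ in the generators of $G$ representing the group element $x_i^{-1}y_i$, introduce a new symbol $t_i$, and adjoin the relator $t_iw_i^{-1}$ to the presentation of~$G$. Let $H$ be the Cayley graph of the resulting presentation. Then $H$ is manifestly a Tietze-supergraph of $G$ whose new edges are all labelled by redundant generators $t_i$. Local finiteness of~$H$ will follow from local finiteness of~$G$: since $\Gamma(G)$ is then finitely generated, it acts with finitely many orbits on the edges of the decomposition tree associated with~$\CF$, so only finitely many generators $t_i$ are needed.

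Every separating pair of $H$ is a separating pair of $G$ because $H\supseteq G$ as edge sets. By the standard property that the nested family $\CF$ provided by Tutte's theorem captures every ``essential'' 2-separator, any 2-separator $\{x,y\}$ of $G$ either bounds a cut in~$\CF$---hence is joined by a $t_i$-edge in~$H$---or is crossed by some pair in~$\CF$, meaning the two endpoints of a new $t_i$-edge lie in distinct components of $G-\{x,y\}$; in the latter case this $t_i$-edge witnesses that $\{x,y\}$ no longer separates~$H$. So every separating pair of $H$ bounds a cut in $\CF$ and is joined by a new edge.

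The delicate point is planarity of $H$. First, fix a planar embedding of $G$ compatible with the tree decomposition: embed each torso planarly, using \Lr{lprem} (essential uniqueness of the embedding of a $3$-connected planar graph) for the $3$-connected torsos and the obvious embedding for the cycle torsos, in such a way that every virtual edge lies on a face of its torso. Then glue the torso embeddings along the 2-separators in~$\CF$ so that at every such separator $\{x,y\}$ both $x$ and $y$ lie on a common face of the global embedding; the essential uniqueness of the torso embeddings permits the two sides of each gluing to be matched consistently. In this embedding, each new $t_i$-edge can be drawn inside the shared face of its two endpoints without crossings, in a $\Gamma(G)$-equivariant way. A compactness argument analogous to \eqref{attach} in the proof of \Lr{cg1con} extends this drawing consistently over the entire (possibly infinite) decomposition. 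The main technical effort is to arrange the torso-level embedding choices coherently across the whole tree while respecting the group action.
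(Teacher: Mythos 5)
Your construction matches the paper's exactly (Tutte's canonical nested set $\CF$ of $2$-cuts, one redundant generator per non-adjacent separating pair bounding a cut in $\CF$), and the local-finiteness argument via finitely many orbits of $2$-separators between two fixed neighbours is in the same spirit as the paper's appeal to \cite[Proposition 4.2]{ThomassenWoess}. The divergence, and the gap, is in the planarity step. The paper does \emph{not} rebuild an embedding from torso embeddings. It fixes an arbitrary planar embedding of $G$, notes that every $2$-separator lies on a common face boundary, and then argues directly that nestedness of $\CF$ prevents any two of the new edges from being forced to cross: if drawing $x_1y_1$ inside a face $F$ blocked drawing $x_2y_2$, the four vertices would alternate along $\partial F$; choosing for each $i$ an $x_i$--$y_i$ path $P_i$ through a component of $G-\{x_i,y_i\}$ avoiding $\{x_j,y_j\}$ (which exists precisely because the two cuts are nested) makes $P_2$ cross the Jordan curve $P_1\cup(y_1\partial F x_1)$, a contradiction. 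That is a self-contained, one-paragraph topological argument with no need to match torso embeddings.

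Your route instead proposes to re-derive a ``tree-decomposition-compatible'' embedding by gluing torso embeddings along the virtual edges, invoking Whitney/Imrich for the $3$-connected torsos, and then placing the new edges inside shared faces. This is where the proposal is incomplete: you explicitly defer ``arranging the torso-level embedding choices coherently across the whole tree while respecting the group action,'' which is exactly the content that would need proving. In particular you would need to show that (a) each torso is planar, (b) each virtual edge lies on a face of its torso's (essentially unique) embedding, (c) the gluings can be iterated consistently over an infinite decomposition tree, and (d) the resulting embedding of $G$ is compatible with the original one in a way that lets the equivariance survive. None of that is needed for the paper's argument, which never touches torso embeddings and never needs equivariance of the drawing of the new edges --- nestedness of $\CF$ alone does the work. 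As a secondary point, your justification that ``every separating pair of $H$ bounds a cut in $\CF$'' via the assertion that Tutte's family captures every essential $2$-separator is stated without proof and is not obviously true when a cycle torso of length at least $4$ appears; this claim would need an explicit argument if you keep it.

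I'd recommend replacing the whole torso-gluing paragraph with the paper's Jordan-curve argument: fix \emph{any} planar embedding of $G$, use that a $2$-separator's two vertices share a face, and let nestedness of $\CF$ do the rest.
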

\begin{proof}
To begin with, pick a $\Gamma(G)$-invariant nested set $\CF$ of cuts of order~$2$.
This set exists due to Tutte's theorem mentioned above.
For every pair of non-adjacent vertices $x,y$ such that one component of $G-\{x,y\}$ lies in~$\CF$, we add a new redundant generator $a$ and relation $a= x^{-1}y$.
Let us show that the nestedness of~$\CF$ implies that we do not lose planarity.

Note that every $2$-separator lies on the boundary of some face.
So if we join $x_1$ and~$y_1$ by a new edge and also want to join $x_2$ and~$y_2$, then the only reason why we cannot do this is because the edge $x_1y_1$ separates the face on whose boundary the vertices $x_2$ and~$y_2$ lie.
So, originally, all four vertices $x_1,x_2,y_1,y_2$ are distinct and lie on a boundary $C$ of some face~$F$ in this order (either clockwise or anticlockwise).
For $i=1,2$, let $P_i$ be an $x_i$--$y_i$ path whose inner vertices lie in a component of $G-\{x_i,y_i\}$ that avoids $x_j$ and~$y_j$ for $j\neq i$.
As the two paths $P_i$ lie outside of~$F$, the path $P_2$ connects a vertex in the inner face of $P_1+y_1x_1$ to one in its outer face, which is impossible due to the Jordan curve theorem.
This proves that we can indeed add the aforementioned redundant generators and relations without losing planarity.

Since every vertex has only finitely many neighbours and every two of them can be separated by only finitely many $2$-separators (see e.g.\ \cite[Proposition 4.2]{ThomassenWoess}), the resulting \Cg\ $G'$ is still locally finite.
\end{proof}

\newcommand{\insep}{well-sep\-ar\-ated}
Call a graph \emph{\insep} if it is $2$-connected and every 2-separator is joined by an edge. 

\begin{thm}\label{thm_Con2Back}
Every planar,  locally finite, \insep\ \Cg~$G$ with $\kappa(G)=2$ admits a \gcplpr.
\end{thm}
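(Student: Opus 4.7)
The plan is to combine the Tutte decomposition of~$G$ with the presentation-building results of \Tr{thm_3con} and \Tr{thm_fundgr} applied blockwise. Concretely, since $G$ is $2$-connected and locally finite, the Droms--Servatius--Servatius extension of Tutte's theorem provides a $\Gamma(G)$-invariant nested set $\CF^*$ of order-$2$ cuts whose torsos are either $3$-connected or cycles. The \insep\ assumption means that every $2$-separator in~$\CF^*$ is joined by an edge of~$G$; these edges play the role of the hinges of our spin structure, and each torso sits inside~$G$ as an honest subgraph (up to possibly adding parallel edges between adjacent $2$-separators, which does not affect planarity or the cycle space).

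For the spin structure and local data, fix the identity vertex~$o$ and a planar embedding of~$G$. For each block $K$ of the Tutte decomposition that contains~$o$, let $B_K\sub\SF'$ consist of those labels $s$ for which the edge at~$o$ labelled~$s$ (or, in the involution case, the unique undirected edge at~$o$ labelled~$s$) lies in~$K$. The sets $B_K$ cover~$\SF'$; property~(S\ref{ssiii}) holds because for each non-involution~$s$ the number of $B_K$'s containing~$s$ equals the number of blocks incident with~$o$ that use the outgoing $s$-edge, which matches the corresponding count for~$s^{-1}$ at~$os$ by transitivity; property~(S\ref{ssiv}) follows because the restriction of the block-cut tree of~$\CF^*$ to the blocks and hinges meeting~$o$ is itself a tree. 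The hinges of this spin structure coincide with the graph-theoretic hinges of~$G$ at~$o$. I~then read $\sigma(K)$ off the embedding as the cyclic order of edges of~$K$ at~$o$, set $\tau(s,K)$ to record whether $s$ acts spin-preserving or spin-reversing within the block~$K$ (well-defined since $K$ is either $3$-connected or a cycle, so by \Lr{lprem} and a direct check for cycles it has only two spins up to reflection), and let $\mu(s,K)$ name the block on the opposite end of the hinge-edge labelled~$s$ from~$K$, as dictated by the decomposition tree.

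For the relators, apply \Tr{thm_fundgr} to each $3$-connected torso~$H$ with its setwise stabiliser $\Gamma_H$ acting on~$H$, obtaining a $\Gamma_H$-invariant nested generating set $\VF_H^\circ$ of $\WF(H)$ consisting of indecomposable closed walks; for each cycle torso the cycle itself serves as the single generator. Picking one representative word per $\Gamma(G)$-orbit from the combined collection $\bigcup_H\VF_H^\circ$ yields the relator set~$\RF$. The hinge-edges bind the blockwise presentations together, and by the van Kampen-type description of~$\pi_1(G)$ via the decomposition tree combined with \Lr{relcc} applied blockwise, $\RF$ is a defining set of relators for~$\Gamma(G)$.

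It remains to verify (P\ref{gplprz})--(P\ref{gplpriii}). Conditions (P\ref{gplprz}) and (P\ref{gplprii}) are immediate from the construction, since each relator stays inside a single block and induces a closed walk in~$G$. Condition (P\ref{gplpriii}) is a minor cleanup using indecomposability. The crux is the non-crossing condition (P\ref{gplpri}): two relators from the same $3$-connected torso are nested as closed walks in that torso by \Tr{thm_fundgr}, and a cycle-torso contributes only a single relator. The main obstacle is to translate this in-block nestedness into nestedness in~$\ttt$ in the sense of \Dr{defgplpr}. The plan is to use an embedding of~$\ttt$ built exactly as in \Sr{secrho}, so that the preimage of each block occupies a disjoint sector of~$\ttt$ separated from the preimages of neighbouring blocks by hinge-double-rays; then two-way infinite paths in~$\ttt$ arising from relators of distinct blocks live in essentially disjoint sectors and cannot cross, while those from the same block inherit their nestedness from \Tr{thm_fundgr}. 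Carrying out this sector analysis carefully, using properties ($\rho$\ref{rhohin}) and (\ref{rholocal2}) of~$\rho$ to handle hinges labelled by involutions whose group element is trivial, is where the bulk of the work lies.
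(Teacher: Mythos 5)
Your proposal follows essentially the same route as the paper's proof: a $\Gamma(G)$-invariant Tutte decomposition into cuts of order~$2$, the spin structure~$\CF$ read off from the blocks incident with the identity vertex, \Tr{thm_fundgr} applied blockwise to obtain nested, indecomposable, stabiliser-invariant generators of each block's fundamental group, and a verification of (P\ref{gplprz})--(P\ref{gplpriii}). One remark on emphasis: where you anticipate that the non-crossing condition~(P\ref{gplpri}) requires a heavy sector analysis invoking ($\rho$\ref{rhohin}) and~\eqref{rholocal2} from \Sr{secrho}, the paper treats this as routine, following directly from the blocked-ness and blockwise nestedness of the relators --- that machinery is designed for the converse direction, where $G$ is not yet known to be planar, whereas here the embedding of $\ttt$ can simply be made to imitate the given planar embedding of~$G$.
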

\begin{proof}
Let $\CF$ be a $\Gamma(G)$-invariant nested set of cuts of order~$2$ as in Tutte's Theorem.
Let $\BF_o$ be the set of blocks (in the sense of Definition~\ref{defblock}) that contain the vertex~$o$.
For $B\in\BF_o$, let $S_B$ be the set of those generators~$s\in \SF\cup\SF\inv$ such that the edge with label $s$ starting at~$o$ lies in~$B$.
Then $\SF\cup\SF\inv$ is covered by the set of $S_B$.
We fix an embedding $\rho$ of \g in $\R^2$, and endow every $S_B$ with the cyclic order induced by $\rho$ at~$o$.
Let $\BF'_o\sub \BF_o$ be maximal such that no two distinct $B,B'\in \BF_o$ are of the form $B=g(B')$ for any $g\in\Gamma(G)$.
We can apply Theorem~\ref{thm_fundgr} to  each $B\in\BF'_o$ to obtain a set $\DF_B\sub \pi_1(B)$ that generates $\pi_1(B)$, and such that $\DF_B^\circ$ is a nested set of indecomposable closed walks that is invariant under the stabiliser of~$B$ in~$\Gamma(G)$.
Then it is easy to see that
\[
\DF:=\bigcup_{\substack{B\in\BF'_o\\ g\in\Gamma(G)}}g(\DF_B)
\]
generates $\pi_1(G)$.
Let $\RF_\DF$ be the set of words corresponding to closed walks in~$\DF^\circ$.
Easily, $\left<\SF\mid\RF_\DF\right>$ is a presentation of~$\Gamma(G)$.
Once more, we use Tietze-transformations to obtain a finite subset $\RF\sub\RF_\DF$ with $\left<\SF\mid\RF_\DF\right>=\left<\SF\mid\RF\right>$, which is possible as $\Gamma(G)$ is finitely presented (Droms~\cite[Theorem~5.1]{DroInf}).
To see that the set $\CF:=\{B_1,\ldots,B_n\}:=\{S_B\mid B\in\BF_o\}$ is a spin structure of~$\PF:=\left<\SF\mid\RF\right>$, it remains to show that the graph $\TF:=(\CF\cup\SF',\EF)$, where $xy\in\EF$ if and only if $x\in y$ or $y\in x$, is a tree.

Let us suppose that \TF\ is not a tree.
Obviously, \TF\ is connected.
So it contains some cycle $S_1s_1\ldots S_ms_mS_1$ with $S_i\in\CF$ and $s_i\in\SF'$.
For each $i\leq m$, let $B(S_i)\in\BF_o$ be such that $S_i=S_{B(S_i)}$.
As each element of $\BF_o$ is a block, there is some path $P_i$ in~$S_i$ connecting the end vertices of $s_{i-1}$ and $s_i$ distinct from~$o$ (with $s_0=s_m$).
The concatenation of all these paths $P_i$ is a cycle $C$ in~$G$ that crosses all hinges~$s_i$ precisely once as $S_i\neq S_{i+1}$ (with $S_{m+1}=S_1$).
But this is not possible as each cycle, and hence also~$C$, must lie in a unique block of~$G$.

For  $i\leq n$, let $B(i)$ be that element of~$\BF_o$ with $S_{B(i)}=B_i$.
For every hinge $b\in\SF$ incident with~$o$ and every $i\leq n$ with $b\in B_i$, let $\mu(b,i)$ be that $B_j$ with $b(B(i))=B(j)$.
So we have $b\inv\in B_j$.
Let $\sigma(i)$ be the spin of $B_i$ at~$o$.
To define whether every generator is spin-preserving or spin-reversing in each element of the spin-structure (it participates in), we remember that the blocks ---being either $3$-connected or cycles--- have a unique embedding in the plane.
So for $s\in\SF$ and $i\leq n$, we define $\tau(s,i)$ to be $0$ if $s$ is spin-preserving in~$B(i)$ and~$1$ otherwise.
(Note that $\tau$ is also defined if $s\notin B_i$.)
Clearly, $(\PF,\CF,\sigma,\mu,\tau)$ is a \gempr.

As every element of~$\DF^\circ$ lies in a unique block, every $R\in\RF$ is blocked with respect to~$\CF$ by definition, and the number of spin-reversing generators in~$R$ is even.
As $\DF$ is nested, it is easy to check that no two relators cross.
The fact that no cycle is a subgraph of any other cycle implies that no relator is a sub-word of a rotation of another relator, and hence our \gempr\ is a \gcplpr.
\end{proof}

With an argument similar to the proof of \cite[Corollary 3.4]{planarpresI}, we obtain:

\begin{cor}
Every planar \insep\ \Cg~\G\ with $\kappa(G)=2$ is the $1$-skeleton of an almost planar Cayley complex of~$\Gamma(G)$.
\end{cor}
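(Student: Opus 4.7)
The plan is to apply Theorem~\ref{thm_Con2Back} to obtain a \gcplpr\ $\PF=\left<\SF\mid\RF\right>$ for~$G$, and then argue that the Cayley complex $X$ of~$\PF$ admits a map to~$\R^2$ whose 2-simplices are nested in the sense of the introduction. Starting from the embedding of~$G$ constructed in the proof of Theorem~\ref{Gplanar}, I would extend it to~$X$ by specifying, for each closed walk $vR$ induced by a relator $R\in\RF$ at a vertex~$v$, a corresponding 2-disc in~$\R^2$ to be the image of the glued 2-cell.

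The key structural observation is that, by condition (P\ref{gplprz}), every $R\in\RF$ is blocked with respect to the spin structure~$\CF$, so each induced closed walk~$vR$ is contained in a single block of~$G$. Combined with Lemma~\ref{hinsep}, which guarantees that distinct blocks meet only at hinges in our embedding of~$G$, this lets me reduce the problem to producing a nested family of 2-discs within each block separately: 2-cells arising from relators supported in distinct blocks then automatically have disjoint interiors.

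Within a single block $K$, whose torso is either a cycle or 3-connected by Tutte's theorem, I would mimic the argument of \cite[Corollary 3.4]{planarpresI}. The no-crossing condition~(P\ref{gplpri}) applied to the relators whose walks live in~$K$ ensures that the corresponding cycles in the block embedding are pairwise non-crossing, hence pairwise nested as closed curves in~$\R^2$. I would draw the 2-cell for each such relator as the bounded component of the complement of its cycle (or the appropriate choice dictated by the bipartitions $\{I,O\}$ from Section~\ref{placlu}), with the special case of a 2-cell bounded by two parallel edges of an involution $s\in\IF$ with $s^2\in\RF$ drawn degenerately, matching the third alternative in the definition of nestedness.

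The main obstacle will be carrying out the nested choice coherently under the group action and correctly handling the involution-parallel-edge case: when two 2-cells share only a pair of parallel edges labelled by an involution, I need their interiors to relate by the degenerate alternative, and when a block contains a hinge $b\in\IF$ with $b=1$ (the situation that made the proof of Lemma~\ref{Ninv2} delicate), I must ensure the chosen discs on the two sides of the hinge agree. I expect both points to be resolved by appealing to property~($\rho$\ref{rhohin}) of the auxiliary embedding of the universal tree~$\ttt$ from Section~\ref{secrho}, which already encodes the correct incidence data at hinges, together with Lemma~\ref{evenTb}; pushing these down along the covering map~$\pi$ should yield the required nested realisation of all 2-cells, and hence the desired almost planar Cayley complex.
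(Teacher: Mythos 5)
Your route is substantially more complicated than the paper's, and in fact imports the heavy machinery from the wrong direction of the argument. You go back to the embedding of $G$ produced by Theorem~\ref{Gplanar} (which builds an embedding abstractly from a \gcplpr), and then invoke Lemma~\ref{hinsep}, the bipartitions $\{I,O\}$, property ($\rho$\ref{rhohin}), and Lemma~\ref{evenTb} to establish nestedness of the 2-cells. But none of that is needed: $G$ is a \emph{planar} Cayley graph by hypothesis, so it already comes with an embedding $\rho'\colon G\to\R^2$, and the crucial fact you are not using is that the relators $\RF$ produced in the proof of Theorem~\ref{thm_Con2Back} are, by construction, words corresponding to the closed walks in $\DF^\circ$, where $\DF$ is obtained from Theorem~\ref{thm_fundgr} and is a \emph{nested} set of indecomposable closed walks in $G$ itself. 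Because these boundary cycles are pairwise nested as curves in the given embedding $\rho'$, you can simply map each 2-cell of the Cayley complex to the finite side of the cycle it bounds, and nestedness of the resulting discs is immediate. The paper's proof is exactly this one-paragraph observation. Your argument, if carried through in full rigour, would likely reach the same conclusion, but it leaves as hand-waving (``I expect both points to be resolved by\ldots'', ``pushing these down along $\pi$ should yield\ldots'') precisely the steps where you'd have to redo a large part of Section~4, and it obscures the fact that nothing about the universal cover $\ttt$ or the hinge-gluing embedding of Theorem~\ref{Gplanar} is relevant once one remembers where $\RF$ came from.
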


\begin{proof}
Since $G$ is planar, there is an embedding $\rho'\colon G \to \mathbb R ^2$ by definition. We will extend $\rho'$ to the desired map $\rho$ from the Cayley complex $X$ of $\Gamma(G)$ with respect to the presentation $\left<\SF\mid\RF\right>$ from above. For this, given any 2-cell $Y$ of~$X$ with boundary cycle $C$, we embed $Y$ in the finite component of $\mathbb R^2 \setminus C$. It is a straightforward consequence of the nestedness of $\DF$ that the resulting map $\rho$ has the desired property.
\end{proof}

\subsection{Consistent embeddings lead to \splpr s}

In the previous section, we have seen that $2$-connected planar \Cg s admit \gcplpr s. However, if the \Cg\ has a consistent embedding, we obtain a bit more  even for $1$-connected graphs:

\begin{thm} \label{thmcons}
Every planar \Cg\ with a consistent embedding admits a \splpr.
\end{thm}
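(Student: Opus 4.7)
The proof follows the template of Theorem~\ref{thm_Con2Back} applied directly to~$G$, leveraging consistency of the embedding to avoid the need for either a nontrivial spin-structure cover~$\CF$ or the extension to a well-separated graph. In particular, no redundant generators need be introduced.

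First, I would extract $\sigma$ and $\tau$ from the consistent embedding of~$G$: $\sigma$ is the cyclic order of $\SF'$ at any vertex of~$G$ (well-defined up to reflection by consistency), and $\tau \colon \SF \to \{0,1\}$ classifies each generator as spin-preserving or spin-reversing.

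Second, I would construct the relator set~$\RF$ by adapting the proof of Theorem~\ref{thm_Con2Back}. Specifically, apply the block-cut decomposition to~$G$, then Tutte's theorem on each 2-connected block, yielding a $\Gamma(G)$-invariant nested set of 2-cuts and the associated decomposition into 3-connected torsos and cycles. Applying Theorem~\ref{thm_fundgr} to each 3-connected torso produces a $\Gamma$-invariant nested generating set of its fundamental group; combining these equivariantly over the entire decomposition yields a $\Gamma(G)$-invariant nested generating set $\DF$ of $\pi_1(G)$ consisting of indecomposable closed walks. Reduce to a finite presentation $\RF$ using Droms' finite presentability result for planar groups.

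Finally, I would verify the two conditions of Definition~\ref{defsplpr}. Condition (sP\ref{plprii}) is automatic: in a consistent embedding, each traversal of a spin-reversing edge flips the spin of the current vertex, so any closed walk must traverse an even number of spin-reversing edges. For condition (sP\ref{plpri}), I construct the tree~$\ttt$ and embed it in~$\R^2$ as dictated by $\sigma$ and~$\tau$; because the embedding of~$G$ is consistent, every vertex of~$\ttt$ has the same spin (up to reflection) just as every vertex of~$G$ does, so this embedding of~$\ttt$ is locally isomorphic to that of~$G$ at every vertex. Hence two words cross in~$\ttt$ in the sense of Section~\ref{secPlpr} if and only if their induced closed walks cross in the embedded~$G$, and the nestedness of~$\DF$ delivers~(sP\ref{plpri}).

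The main obstacle I anticipate is the careful treatment of the 1-connected case and of 2-separators that are not edges: Theorem~\ref{thm_Con2Back} is stated for 2-connected well-separated Cayley graphs, so some adaptation is required to run its block/Tutte decomposition argument for an arbitrary planar consistently embedded~$G$ without invoking Lemmas~\ref{cg1con} or~\ref{cg2con} (which would change the generating set, and hence the graph). Once this adaptation is in place, consistency does the rest of the work: it ensures that the single cover $\{\SF'\}$ suffices in place of the nontrivial spin structure used for general planar presentations, so that the output of the construction is automatically a special planar presentation rather than merely a generic one.
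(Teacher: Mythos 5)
Your overall architecture matches the paper's: extract $\sigma$ and $\tau$ from the consistent embedding, decompose $G$ into maximal $2$-connected pieces and further into Tutte pieces, apply Theorem~\ref{thm_fundgr} to get an equivariant nested generating set of indecomposable closed walks, Tietze-reduce to a finite $\RF$, and verify (sP\ref{plpri}) from nestedness and (sP\ref{plprii}) from consistency. That part of your reasoning is sound and is exactly what the paper does.

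However, the step you flag as an ``obstacle'' and leave open is in fact where the paper does essential work, and your intended resolution points the wrong way. You hope to \emph{avoid} the argument of Lemma~\ref{cg2con}, on the grounds that it changes the generating set. But the paper's proof \emph{does} invoke (an adaptation of) that argument: it joins the two vertices of a $2$-separator $\{x,y\}$ by a new edge whenever $xy \notin E(G)$ and $G - \{x,y\}$ has two components with boundary $\{x,y\}$, observing that in this restricted situation the new edge can be drawn so as to \emph{preserve consistency} of the embedding. This case restriction is the non-obvious point: an unrestricted application of Lemma~\ref{cg2con} could indeed destroy consistency, so the paper only adds the edges it can afford. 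Your worry about the generating set is dealt with at the Tietze step --- the new edges correspond to redundant generators, and the relators are reduced back to words over the original $\SF$, so the final presentation $\PF = \langle \SF \mid \RF\rangle$ has the generating set of $G$. Without this preparatory step you cannot directly apply Theorem~\ref{thm_fundgr} to the torsos (whose edges between separator vertices may not exist in $G$), so leaving it as ``some adaptation is required'' is a genuine gap, not a side remark. The rest of your verification --- the automaticity of (sP\ref{plprii}) and that crossings in $\ttt$ correspond to crossings in the embedded $G$ --- is correct and parallels the paper.
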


\begin{proof}
Let \g be such a graph.
First note that, by repeating the arguments of the proof of Lemma~\ref{cg2con}, we can join the two vertices of any $2$-separator $\{x,y\}$ by a new edge whenever $xy\notin E(G)$ and $G-\{x,y\}$ has two components $C$ with $\partial C=\{x,y\}$, while keeping the embedding consistent.
So we may assume that every maximal $2$-connected subgraph of~$G$ is \insep.

Let $\BF$ be a set of blocks of the maximal $2$-connected subgraphs of~$G$ consisting of one block from each $\Gamma(G)$-orbit.
As before, Theorem~\ref{thm_fundgr} gives us for each $B\in\BF$ a set $\DF_B$ that generates $\pi_1(B)$ such that $\DF_B^\circ$ is a nested set of indecomposable closed walks that is invariant under the stabiliser in $\Gamma(G)$ of~$B$.
Let $\RF_B$ be the set of words corresponding to the elements of~$\DF_B^\circ$.
As above, 
 Tietze-transformations give us a finite $\RF\sub\bigcup_{B\in\BF}\RF_B$ such that $\PF=\langle\SF\mid\RF\rangle$ is a finite presentation of~$\Gamma(G)$, where $\SF$ is the generating set of~$G$.

If we let $\sigma$ be the spin of one fixed vertex~$x$ and $\tau(s)=0$ if the edge from $x$ labelled~$s$ is spin-preserving and $\tau(s)=1$ otherwise, then $(\PF,\sigma,\tau)$ is a \splpr\ of~$\Gamma(G)$.
Indeed, nestedness of the closed walks in~$\DF_B^\circ$ implies that the corresponding words are non-crossing, the fact that they are indecomposable implies that no relator is a subword of any other relator, and the embedding implies that every relator contains an even number of spin-reversing letters.
\end{proof}

\section{Conclusions}\label{secConc}

We now put the above results together to prove the statements of the introduction.
Because of the redundant generators used in Lemmas~\ref{cg1con} and~\ref{cg2con}, we need to generalise our notion of \plpr\ slightly. We say that $s\in \SF$ is an {\em obviously redundant} generator of a presentation $  \left<\SF\mid \RF\right>$, if there is exactly one relator $W_s\in \RF$ in which $s$ appears, and $s$  appears exactly once in~$W$. 
A {\em general \plpr} is a presentation obtained from a generic \plpr\ by recursively removing zero or more obviously redundant generators $s$ along with the corresponding relator $W_s$. The last two sections prove the two directions of \Tr{mainthm}:

\begin{proof}[Proof of \Tr{mainthm}]
If \g is a finitely generated planar \Cg, then by Lemmas~\ref{cg1con} and~\ref{cg2con} we may find a Tietze-supergraph that is is 2-connected and \insep.  \Tr{thm_Con2Back} then yields a \gcplpr, from which we can remove any generators that were not present in \g to obtain a \glplpr\ of \G, which proves the forward direction.

For the backward direction, if \g admits a \glplpr, then some supergraph $G'$ admits a \gcplpr, and is thus planar by \Tr{Gplanar}. Since planarity is preserved under deleting edges, so is \G.
\end{proof}

A similar result holds when we insist that there is a consistent embedding, and we can even allow our \Cg s to have infinitely many generators:
\begin{thm}\label{iffthmcon}
A Cayley graph  admits a consistent embedding in the plane \iff\ it admits a \splpr.
\end{thm}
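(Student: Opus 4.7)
The backward implication needs no new work: by \Tr{thmplanar}, the \Cg\ of any \splpr\ already admits a consistent embedding (with the spin and spin-behaviour prescribed by $\sigma$ and $\tau$). Since nothing in the statement or proof of \Tr{thmplanar} requires finite generation, this direction applies uniformly to \Cg s with possibly infinitely many generators.

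For the forward direction, my plan is to replay the proof of \Tr{thmcons} almost verbatim, omitting only the final Tietze-reduction step that was used there purely to pass to a finite set of relators (via Droms' theorem on finite presentability of planar groups); in the present setting we permit infinite relator sets, so that step is unnecessary. More precisely, starting from a consistently embedded \Cg\ \G, I would first adapt Lemma~\ref{cg2con} to enlarge \g to a Tietze-supergraph~$H$ that is \insep, while keeping the embedding consistent (the Jordan-curve-theorem argument used in the proof of Lemma~\ref{cg2con} does not rely on local finiteness, so the enlargement is available for arbitrarily generated \Cg s). Next I choose a set $\BF$ containing one representative block, in the sense of the set of cuts of order~$2$ from Tutte's Theorem, from every $\Gamma(H)$-orbit, and apply \Tr{thm_fundgr} to each $B\in\BF$ to obtain a set $\DF_B\subset\pi_1(B)$ whose cyclic reductions $\DF_B^\circ$ form a nested, $\Gamma_B$-invariant generating family of indecomposable closed walks. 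Let $\RF:=\bigcup_{B\in\BF}\RF_B$, where $\RF_B$ is the set of words spelled out by the walks in $\DF_B^\circ$; this gives a presentation $\PF=\langle\SF\mid\RF\rangle$ of $\Gamma(G)$ with $Cay(\PF)=G$.

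Setting $\sigma$ to be the spin at any fixed basepoint $o$ of the consistent embedding, and $\tau(s)=0$ or $1$ according to whether the $s$-labelled edges are spin-preserving or spin-reversing, I claim $(\PF,\sigma,\tau)$ is a \splpr. The verification is the same as in the finite case: nestedness of the $\DF_B^\circ$ translates into the non-crossing axiom (sP\ref{plpri}), the fact that the embedding of \g is consistent forces each closed walk of $\DF_B^\circ$ to contain an even number of spin-reversing letters, giving (sP\ref{plprii}), and indecomposability together with the fact that distinct blocks share at most a $2$-separator ensures the various quantities are well defined.

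The main obstacle I foresee is that \Tr{thm_fundgr} is stated for \emph{locally finite} $3$-connected graphs, whereas in the infinitely generated case some blocks of $H$ may fail to be locally finite. To handle this I would check that the underlying result~\cite[Theorem~6.2]{planarCycles2} only needs local finiteness in order to guarantee certain compactness properties of the cycle space that can alternatively be supplied by the existence of a consistent planar embedding of the block; this is where I expect the bulk of the technical work to lie. Once this mild strengthening is in place, all other ingredients of the argument are purely combinatorial and transfer unchanged from the finitely generated setting.
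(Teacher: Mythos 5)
Your proposal uses exactly the two ingredients the paper does: Theorem~\ref{thmplanar} for the backward implication and (the content of) Theorem~\ref{thmcons} for the forward one. In fact, the paper's own proof is a one-liner that simply cites those two theorems, so the high-level route is identical. Where you depart is in re-deriving \Tr{thmcons} inline rather than citing it, and that extra work is not needed to establish \Tr{iffthmcon} as long as one takes \Tr{thmcons} at face value. That said, the obstacle you flag is a genuine one and is worth stating: the proof of \Tr{thmcons} leans on Tutte's theorem (via~\cite{DSS-2Con}), on \Tr{thm_fundgr}, and on Droms' finite-presentability result, all of which are stated for locally finite graphs, whereas the paragraph preceding \Tr{iffthmcon} claims the result ``even for infinitely many generators.'' So your concern exposes a latent gap, but it lives in the scope of \Tr{thmcons} (and of \Tr{thm_fundgr}) rather than in the short reduction of \Tr{iffthmcon} to those two theorems; the cleanest fix is to cite \Tr{thmcons} as the paper does and then, if you want the infinitely generated case, add a separate remark strengthening \Tr{thm_fundgr} along the lines you sketch, rather than folding a re-proof of \Tr{thmcons} into the proof of \Tr{iffthmcon}.
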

The two directions of \Tr{iffthmcon} are given by Theorem~\ref{thmcons} and Theorem~\ref{thmplanar}.

\medskip
Next, we use our presentations to obtain effective enumerations.

\begin{thm}\label{thm_maincon}
The \Cg s that admit a consistent embedding in the plane are effectively enumerable.
\end{thm}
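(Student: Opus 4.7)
The plan is to exploit Theorem~\ref{iffthmcon}, which identifies the Cayley graphs admitting a consistent planar embedding as exactly those that admit a \splpr. Effective enumeration of the desired graphs thus reduces to effective enumeration of \splpr s, since via Theorem~\ref{thmplanar} each such presentation effectively describes its Cayley graph (and in the finitely generated setting that concerns us, Theorem~\ref{thmcons} guarantees one can take the presentation to be finite, using that the underlying group is finitely presented by Droms). I would therefore enumerate all quadruples $(\SF, \RF, \sigma, \tau)$ where $\SF$ is a finite set of symbols, $\RF$ a finite set of words over $\SF \cup \SF^{-1}$, $\sigma$ a cyclic ordering of $\SF'$, and $\tau \colon \SF \to \{0,1\}$; this is a trivially computably listable collection of finite objects.

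For each candidate I would then decide algorithmically whether it satisfies Definition~\ref{defsplpr}, outputting the ones that pass. Condition~(sP\ref{plprii}) is immediate: for each relator, count the letters $s$ with $\tau(s)=1$ and test parity. The substantive task is condition~(sP\ref{plpri}): for each pair $W, Z \in \RF$, decide whether any biinfinite paths in the embedded tree $\ttt$ induced by $W^\infty$ and $Z^\infty$ cross. Because $\ttt$ is vertex-transitive under its label-preserving automorphism group and the induced biinfinite paths are periodic with periods dividing $|W|$ and $|Z|$ respectively, any witness to a crossing is equivalent, under that action together with the periodicity, to one localised on a finite sub-configuration of $\ttt$ whose diameter is bounded by a computable function of $|W|+|Z|$. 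Concretely, one fixes an edge of $\ttt$, enumerates the finitely many triples (rotation of $W$, rotation of $Z$, relative alignment of the induced paths along that edge) and inspects initial segments of length $O(|W|\cdot|Z|)$; this is a bounded finite computation per pair of relators.

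The main obstacle is justifying the bound on this inspection window: one must show that if two periodic biinfinite paths in the \vapf-embedded tree $\ttt$ do not cross within such a finite window, then they do not cross at all. This follows from the periodic structure: the relative position of the two paths has period dividing $\mathrm{lcm}(|W|,|Z|)$, and since the embedding is \vapf, any crossing geometry that occurs at all must already be witnessed within one such period near any chosen edge. Once this effective crossing test is in place, the remainder is bookkeeping: by Theorem~\ref{thmplanar} every accepted tuple corresponds to a Cayley graph with a consistent planar embedding, and by Theorem~\ref{iffthmcon} combined with Theorem~\ref{thmcons} every such Cayley graph arises (via a finite presentation) from some accepted tuple, yielding the sought effective enumeration.
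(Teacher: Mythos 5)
Your proposal follows the same route as the paper: reduce via Theorem~\ref{iffthmcon} to enumerating \splpr s, enumerate the (finite) embedded presentations, and filter by Definition~\ref{defsplpr}. The paper dismisses the algorithmic checkability of condition (sP\ref{plpri}) with a one-line ``it is easy to see,'' whereas you supply the bounded-window argument for deciding the crossing test (which is essentially correct, though the witness can also be an intersection in a single vertex rather than along an edge, and the Fine--Wilf periodicity lemma gives an inspection window of length $O(|W|+|Z|)$ rather than $O(|W|\cdot|Z|)$); this fills in a detail the paper leaves to the reader.
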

\begin{proof}
By \Tr{iffthmcon}, it suffices to produce an effective enumeration of the \splpr s. 
For this, it suffices to produce an enumeration of the embedded presentations, and output those embedded presentations that satisfy the three conditions in the definition of a \splpr\ (Definition~\ref{defsplpr}); it is easy to see that these conditions can be checked algorithmically.
\end{proof}

\begin{thm}\label{thm_maincon2}
The planar, locally finite \Cg s are effectively enumerable.
\end{thm}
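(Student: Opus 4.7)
The plan is to leverage \Tr{mainthm}: a finitely generated group admits a planar Cayley graph if and only if it admits a \glplpr. Since a locally finite Cayley graph is automatically finitely generated (its degree equals the size of the symmetrised generating set), it suffices to produce an effective enumeration of the \glplpr s; each such presentation then determines a Cayley graph via the construction in \Sr{defpres}, and this graph is planar and locally finite by \Tr{Gplanar}. Conversely, by \Tr{mainthm} every planar locally finite \Cg\ arises as $Cay(\PF)$ for some \glplpr\ $\PF$, so the resulting list is complete (up to repetitions, which is permissible for an effective enumeration).

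A candidate \glplpr\ is specified by finitely many pieces of combinatorial data: a finite generating set $\SF$, a finite set $\RF$ of relators, a cover $\CF=\{B_1,\ldots,B_k\}$ of $\SF'$, functions $\sigma$, $\tau$, and $\mu$, together with a list of ``obviously redundant'' generators to be omitted (as in the definition of \glplpr\ at the start of \Sr{secConc}). The totality of such tuples is countable and can be listed in a standard diagonal fashion. So the crux is to verify, given such a tuple, whether it satisfies the defining conditions --- i.\,e.\ whether $\CF$ is a spin structure (conditions (S\ref{ssiii}) and (S\ref{ssiv})) and whether the resulting \gempr\ satisfies (P\ref{gplprz})--(P\ref{gplpriii}) of \Dr{defgplpr}.

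Each of these conditions reduces to a finite check. The spin structure axioms are a direct verification about the finite auxiliary graph~$X$. Condition (P\ref{gplprz}) (blocked relators) and (P\ref{gplprii}) (even number of \srev\ letters) are local scans through each relator, while (P\ref{gplpriii}) is a combinatorial comparison of finitely many words. The main obstacle, and the step most in need of justification, is the algorithmic test for (P\ref{gplpri}): no two relators cross. Crossing is defined via 2-way infinite paths in the embedded tree $\ttt$, and a priori involves infinitely many configurations. However, the relevant information is purely local: whether two words $W,Z\in\RF$ cross is determined by whether, for some cyclic alignment of $W^\infty$ and $Z^\infty$ that shares a common vertex $x$ of $\ttt$, the four edges $s,s',t,t'$ (in the notation of \fig{stu}) appear at $x$ in the ``crossing'' cyclic order dictated by $\sigma$ and $\tau$. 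Since $|W|,|Z|<\infty$ and the spin at any vertex involves only $|\SF'|$ edges, there are only finitely many such alignments to test, and each test is a finite look-up in the data $(\CF,\sigma,\tau,\mu)$. This makes (P\ref{gplpri}) algorithmically decidable, completing the description of an algorithm that enumerates precisely the \glplpr s and hence, by the discussion above, the planar locally finite \Cg s.
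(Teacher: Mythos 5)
Your proposal is correct and follows essentially the same route as the paper: reduce via \Tr{mainthm} to effectively enumerating the \glplpr s, enumerate candidate tuples $(\PF,\CF,\sigma,\tau,\mu)$, and algorithmically check the spin-structure axioms and conditions (P\ref{gplprz})--(P\ref{gplpriii}) before peeling off obviously redundant generators. You add a welcome piece of detail that the paper leaves implicit, namely why the crossing condition (P\ref{gplpri}) reduces to finitely many local checks; one could tighten your sketch by noting that the common subpath of the two periodic double-rays has length bounded in terms of $|W|$ and $|Z|$ (a periodicity argument), after which the two separation points and their spins finalise the test, but this does not change the correctness of the argument.
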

\begin{proof}
Similarly to the proof of Theorem~\ref{thm_maincon}, we remark that any effective enumeration of the \glplpr s gives rise to an effective enumeration of the planar \Cg s by \Tr{mainthm}.

To effectively enumerate the \glplpr s, we start with an enumeration of the generic embedded presentations, and output those that satisfy the four conditions of Definition~\ref{defgplpr}, which can be checked algorithmically. Having thus effectively enumerated the \gcplpr s, we remove any obviously redundant generators to effectively enumerate the \glplpr s: for each output $G= \left<\SF\mid \RF\right>$, check for every $s\in \SF$ whether $s$ is an {obviously redundant} generator. For every such $s$ found, output the presentation $G':= \left<\SF\sm \{s\} \mid \RF \sm \{W_s\}\right>$. Then, recursively apply the same check to~$G'$, removing any obviously redundant generators of that presentation and so on.
\end{proof}

We conclude with some related questions concerning embeddings of Cayley complexes.
Let $CC(\PF)$ denote the Cayley complex of a presentation $\PF$. Call a map $\rho\colon CC(\PF) \to \R^2$ \emph{\cns} if its restriction to $Cay(\PF)$ is \cns. Call $\rho$ \emph{nested} if it witnesses the fact that $CC(\PF)$ is almost planar, i.e.\ if 
the images under $\rho$ of the interiors of any two 2-cells are either disjoint, or one is contained in the other.

The following might be interesting as it exhibits a geometric property of Cayley complexes which can be decided by an algorithm.

\begin{thm}\label{thm_decidS}
There is an algorithm that given a presentation $\PF=\left< \SF \mid \mathcal \RF \right>$ decides whether $CC(\PF)$ admits a nested, consistent map into $\R^2$.
\end{thm}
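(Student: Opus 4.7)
The plan is to establish the equivalence that $CC(\PF)$ admits a nested, consistent map into $\R^2$ precisely when there exist a spin $\sigma$ on $\SF'$ and a function $\tau\colon\SF\to\{0,1\}$ such that $(\PF,\sigma,\tau)$ is a \splpr\ in the sense of Definition~\ref{defsplpr}. Granted this equivalence, the algorithm is immediate: assuming as usual that the input is a finite presentation, the set $\SF'=\SF\cup(\SF\sm\IF)^{-1}$ is finite, so there are only finitely many candidate pairs $(\sigma,\tau)$; for each candidate we verify conditions (sP\ref{plpri}) and (sP\ref{plprii}) of Definition~\ref{defsplpr}. Condition (sP\ref{plprii}) is a finite parity count along each relator, and condition (sP\ref{plpri}) is algorithmically checkable, as already noted in the paragraph following Definition~\ref{defsplpr}. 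The algorithm outputs YES if some candidate passes, and NO otherwise.

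For the backward direction of the equivalence, a \splpr\ yields a consistent embedding of $Cay(\PF)$ with spin $\sigma$ and spin-behaviour $\tau$ by Theorem~\ref{thmplanar}. We extend this embedding to a map of $CC(\PF)$ by attaching each 2-cell into a bounded region of the complement of its boundary cycle. Condition (sP\ref{plpri}) translates into the fact that no two boundary cycles cross each other topologically in the embedded Cayley graph, so the attached 2-cells can be placed in a pairwise nested fashion.

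For the forward direction, given a nested consistent map $\rho\colon CC(\PF)\to\R^2$, its restriction to $Cay(\PF)$ is by hypothesis a consistent embedding, from which we read off a common spin $\sigma$ (up to reflection) shared by all vertices and a function $\tau\colon\SF\to\{0,1\}$ recording whether each generator is spin-preserving or spin-reversing, as in \Sr{secDem}. Condition (sP\ref{plprii}) follows at once, because each relator traces a closed walk returning to its starting vertex, which forces the parity of spin reversals along it to vanish. Condition (sP\ref{plpri}) is argued contrapositively: if relators $W,Z$ crossed in the tree-theoretic sense, one lifts $\rho|_{Cay(\PF)}$ locally to an embedding of the tree $\ttt$ respecting $\sigma$ and $\tau$ (essentially the restriction to the consistent case of the construction in \Sr{secrho}), whence one obtains 2-way infinite paths in $\ttt$ that cross topologically; projecting down to $Cay(\PF)$ produces cycles whose bounded regions interleave, which is incompatible with a nested placement of the corresponding 2-cells of $CC(\PF)$.

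The main obstacle is precisely this last step: making rigorous the passage from the abstract, tree-theoretic notion of crossing of relators to a genuine topological obstruction for $\rho$, while handling the fact that each cycle has many group translates that must all be nested simultaneously. The key tool will be that consistency of $\rho$ ensures the local picture at every vertex matches the local picture in $\ttt$, so that abstract crossings of $W^\infty$ and $Z^\infty$ in $\ttt$ propagate to honest topological crossings of cycles in $Cay(\PF)$. Once this is carefully argued, the equivalence --- and hence the decidability claim of the theorem --- follows.
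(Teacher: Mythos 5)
Your proposal is correct and takes essentially the same route as the paper: establish the equivalence between existence of a nested consistent map and existence of a \splpr, invoke \Tr{thmplanar} for the backward direction, read off $(\sigma,\tau)$ from a consistent map for the forward direction, and then enumerate the finitely many candidate $(\sigma,\tau)$ algorithmically. The only difference is that the paper's forward direction dismisses (sP\ref{plpri}) as following ``immediately'' from nestedness, whereas you flag that this step deserves an argument via lifting to $\ttt$ --- a fair observation, but not a change of method.
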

\begin{proof}
We claim that $CC(\PF)$ admits a nested, consistent map into $\R^2$ \iff\ there is a spin $\sig$ on $\SF$ and a `spin-behaviour' function $\tau$ from $\SF$ to $\{0,1\}$ \st\ the triple $(\PF, \sigma, \tau) $ is a \splpr.

To prove the backward direction, note that if $\PF, \sigma, \tau $ is a \splpr, then $Cay(\PF)$ admits a consistent embedding $\rho$ into $\R^2$ by \Tr{thmplanar}. Extend this embedding into a map $\rho'$ from $CC(\PF)$ to $\R^2$ by mapping each 2-cell inside the closed curve to which $\rho$ maps its boundary. Then $\rho'$ is nested because no two words in $\RF$ cross each other by the definition of a \splpr.

For the forward direction, given such a map $\rho\colon CC(\PF) \to \R^2$, we can read the spin data $\sigma, \tau $ from $\rho$ since $\rho$ is consistent. Then $\PF, \sigma, \tau $ is an \empr. To prove that it is a \splpr\ it remains to show that no two words in $\RF$ cross each other, which follows immediately from the nestedness of  $\rho$.
\end{proof}

By using \glplpr s instead of special ones, \Tr{thm_decidS} can be generalised to yield a further decidable property of Cayley complexes, but instead of maps into $\R^2$ we have to consider maps into larger spaces obtained by glueing copies of $\R^2$ along (possibly closed) bounded simple curves ---to which we map the hinges of our \Cg s--- in a tree like fashion. We leave the details to the interested reader.

Our results do not yet answer the following
\begin{problem}
Is there an algorithm that given a presentation $\PF=\left< \SF \mid \mathcal \RF \right>$ decides whether $CC'(\PF)$ is planar?
\end{problem}
In this problem $CC'(\PF)$ denotes the complex obtained from $CC(\PF)$ by removing redundant 2-cells, that is, if a set of 2-cells have the same boundary, we remove all but one of them. Some authors still call $CC'(\PF)$ the Cayley complex of $\PF$. (In \Tr{thm_decidS} it does not make a difference whether we consider $CC(\PF)$ or $CC'(\PF)$.)

We remark that it is not true that $CC(\PF)$ is planar \iff\ $\PF$ is a facial presentation in the sense of \cite{vapf}; the presentation $\PF= \left<a,b \mid a^2, b^3, ab^{-1}\right>$ if facial, but its Cayley complex consists of a single vertex, two loops, a 2-cell winding twice around a loop, and a 2-cell winding three times around the other loop.

\medskip
Having studied embeddings of Cayley complexes in $\R^2$, the following suggests itself
\begin{problem}
Which groups admit a Cayley complex embeddable in $\R^3$?
\end{problem}

\section{Further remarks} \label{secFR}

We proved that every planar \Cg\ \g admits a \plpr\ such that every relator induces a cycle of \g (rather than an arbitrary closed walk with repetitions of vertices). It would be interesting if we could strengthen the definition of a \plpr\ in such a way that this is always the case in the resulting planar \Cg. Some strengthening will be necessary as shown by the  example $\PF= \left<a,b \mid a^2, b^3, ab^{-1}\right>$ from the previous section. This is a \plpr\ ---even stronger, every relator is facial--- but it is easy to see that its group is the group of one element. Our optimism that this may be possible stems from the fact that it was possible in the cubic case~\cite{cayley3}.

If we could do this, then it would probably help to prove that the planar \Cg s are effectively constructible:
\begin{conj}
There is an algorithm that given a \gplpr~$\PF$, and $n\in \N$, outputs the ball of radius $n$ in the \Cg\ of $\PF$.
\end{conj}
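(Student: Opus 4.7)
The plan is to first establish the strengthening of \gplpr s flagged by the authors in the preceding paragraph, namely that every planar \Cg\ admits a \gplpr\ all of whose relators induce cycles (not merely arbitrary closed walks) in the corresponding \Cg; call these \emph{cycle-\gplpr s}. The authors signal that such a strengthening should be possible, citing the cubic case handled in \cite{cayley3}. Granted a cycle-\gplpr, I would construct the ball of radius~$n$ in \g\ by incrementally drawing relator-bounded faces in the plane, using the embedding~$\rho$ of \ttt\ from \Sr{secrho} as a template.

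More concretely, I would grow a plane graph $B_k$, maintained together with a partial planar embedding, by induction on~$k$. Start with $B_0=\{1\}$. Assuming inductively that $B_k$ equals the ball of radius~$k$ in \g, together with the correct rotation system around each of its vertices prescribed by the spin data $(\CF,\sigma,\tau,\mu)$ of the \gplpr, for every vertex~$v$ at distance exactly~$k$ from~$1$ and every generator $s\in\SF$ I would tentatively place the edge $vs$ at its spin-prescribed slot around~$v$. Before declaring the other endpoint a fresh vertex, I would run the finite test: is there a relator $R\in\RF$, or a rotation or inverse thereof, of the form $sUt$ such that traversing~$U$ from the new endpoint along edges already present in $B_k$ arrives at a vertex $w$ with an already-placed edge labelled~$t^{-1}$ leading back to~$v$? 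If so, identify the new endpoint with $w$; otherwise adjoin it as a fresh vertex. Repeat until every vertex of~$B_k$ has all its incident edges placed; the result is $B_{k+1}$.

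Correctness of this procedure rests on two ingredients. First, by \Tr{Gplanar} the Cayley graph is planar, and by Lemma~\ref{relcc} combined with the nestedness condition (P\ref{gplpri}) of the \gcplpr, the relator-cycles generate the cycle space in a nested way, so each bounded face of any planar embedding of~\g\ is bounded by (a translate of) a single relator-cycle. Consequently every identification between two vertices within the ball is already witnessed by a single relator-face incident to them, and iterating the above face-closure exhausts all identifications. Second, letting $L$ be the maximum length of a relator in the finite set~$\RF$, any face of~\g\ incident with a vertex at distance~$\leq n$ lives inside the ball of radius $n+L$, so the procedure stabilises on the ball of radius~$n$ after at most $N(n):=n+L$ iterations, which is an effectively computable function of~$n$.

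The main obstacle is the strengthening step itself: showing that every \gplpr\ can be effectively replaced by a cycle-\gplpr. Without it, a relator might induce a walk revisiting vertices, which breaks the face-closure test since the "face" it would close is not a simple curve and one cannot unambiguously say which edge should be identified with which. A secondary difficulty is justifying that the algorithm need not consider sums of relator-cycles stretching far outside~$B_{N(n)}$ in order to capture identifications inside the ball of radius~$n$; here one must upgrade the qualitative nestedness of~(P\ref{gplpri}) into a quantitative locality statement, for which the Tutte-type decomposition into $3$-connected or cyclic blocks used in \Sr{sec_Con2}, together with the fact that faces of $3$-connected planar graphs are unique up to the embedding, should supply the needed rigidity.
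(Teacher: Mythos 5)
Note first that the statement you are attempting to prove is explicitly labelled a \emph{conjecture} in the paper; the authors do not supply a proof, they only remark that the cubic case was handled in \cite{cayley3} and that a strengthening of the notion of \plpr\ to one in which every relator induces a cycle would ``probably help.'' So there is no paper proof for me to compare against; I can only assess your sketch on its own terms.

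Your sketch has two genuine gaps, one of which you acknowledge and one of which you do not. The acknowledged gap is the strengthening to cycle-\gplpr s: you do not say how, given an arbitrary \gplpr, one would \emph{algorithmically} produce a cycle-\gplpr\ of the same group, and this is precisely where the difficulty the paper flags lives. The paper's example $\left<a,b\mid a^2,b^3,ab^{-1}\right>$ shows that a presentation can satisfy all the nestedness and facial conditions while collapsing to the trivial group, so relators induce degenerate walks; deciding whether that happens is tantamount to solving instances of the word problem, and the paper's proof of \Tr{Gplanar} crucially works \emph{non}-constructively exactly because ``it is impossible to predict whether $s=1$'' for a separating generator. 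Your algorithm has no mechanism to detect such collapses.

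The unacknowledged gap is in your correctness argument. You assert that nestedness (P\ref{gplpri}) plus Lemma~\ref{relcc} implies ``each bounded face of any planar embedding of \g\ is bounded by (a translate of) a single relator-cycle,'' and from this you deduce that every identification inside the ball of radius $n$ is witnessed by a single relator within distance $n+L$. Neither step is established. A nested generating set of the cycle space need not contain all face boundaries --- drop one face from the face set of a finite polyhedron and you still have a nested generating family, yet the missing face is witnessed only by a sum of relators --- and in the infinite setting an identification can be forced only by a long product of conjugates of relators ranging far outside any fixed ball. Upgrading the qualitative nestedness to the quantitative locality statement you need is precisely the open content of the conjecture; appealing to the Tutte decomposition does not by itself yield the radius bound $N(n)=n+L$, because the relators of the \gplpr\ produced in \Sr{sec_Con2} are not claimed to be face boundaries, only indecomposable and nested. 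Without these two ingredients your face-closure loop is neither known to terminate correctly nor known to be complete.
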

This was proved in \cite{cayley3} in the cubic case.

\medskip
A further interesting question, also asked in \cite{cayley3}, is whether \fe\ $n\in \N$ there is an upper bound $f(n)$, \st\ every $n$-regular planar Cayley graph admits a planar presentation with at most $f(n)$ relators. This would strengthen Droms' result~\cite[Theorem~5.1]{DroInf} that planar groups are finitely presented.
 
\medskip
We conclude with a rather unrelated observation. 
It is known that the fundamental group of a finite graph of
groups with residually finite
vertex groups and finite edge groups is residually finite \cite[II.2.6.12]{SerArb}. Combining this with Dunwoody's result mentioned in the introduction, we obtain the following corollary, to which this paper has no contribution
\begin{cor} \label{corresf}
Every planar group is residually finite.
\end{cor}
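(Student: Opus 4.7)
The plan is to assemble the corollary from three off-the-shelf ingredients: Dunwoody's structural decomposition of planar groups, classical residual finiteness of planar discontinuous groups, and Serre's theorem on fundamental groups of graphs of groups.

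First, I would invoke Dunwoody's result \cite[Theorem~3.8]{dunPla}, mentioned in the introduction, which provides for every planar group $G$ a subgroup $H\le G$ of index at most two that is isomorphic to the fundamental group of a finite graph of groups whose vertex groups act properly discontinuously on a surface and whose edge groups are finite. The index-two proviso, which is the annoying technical point in Dunwoody's formulation, is harmless here because residual finiteness is closed under finite extensions: if $H\le G$ has finite index and $H$ is residually finite, then so is $G$ (pass to the normal core of $H$ and then take finite quotients compatibly). So it suffices to prove that such an $H$ is residually finite.

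Next, the vertex groups appearing in Dunwoody's decomposition are planar discontinuous groups, and these are classically known to be residually finite: each such group is either virtually a free group or virtually a surface group (finite orbifold extensions of a Fuchsian surface group), and both free groups and surface groups are residually finite, a property preserved under finite extensions. Given this, I would then apply the Serre--Bass result \cite[II.2.6.12]{SerArb} cited in the text: the fundamental group of a finite graph of groups with residually finite vertex groups and finite edge groups is residually finite. Applied to the decomposition of $H$, this yields that $H$ is residually finite, and hence so is $G$ by the previous paragraph.

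There is no genuine obstacle beyond invoking the right statements: the only delicate point is accounting for the `index two' caveat in Dunwoody's theorem, which is dispatched by the elementary fact that residual finiteness passes from finite-index subgroups to the ambient group. In a full write-up I would merely need to state each of the three inputs with a precise reference and chain them together in the two sentences above; the author's note that `this paper has no contribution' to this corollary accurately reflects that no new argument is required.
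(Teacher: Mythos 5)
Your proposal is correct and follows essentially the same route as the paper's one-paragraph remark: combine Dunwoody's decomposition \cite[Theorem~3.8]{dunPla} with Serre's theorem \cite[II.2.6.12]{SerArb} on graphs of groups with residually finite vertex groups and finite edge groups. You helpfully make explicit the two points the paper leaves implicit (that residual finiteness passes up from a finite-index subgroup, and that the planar discontinuous vertex groups are residually finite), but the argument is the same.
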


\bibliographystyle{plain}
\bibliography{../collective}

\end{document}